\documentclass[a4paper,11pt,reqno]{amsart}
\usepackage[dvipdfm]{hyperref}
\setlength{\oddsidemargin}{0pt}
\setlength{\evensidemargin}{0pt}
\setlength{\textwidth}{470pt}
\setlength{\textheight}{640pt}
\setlength{\marginparwidth}{0pt}
\topmargin=0cm
\title[free boundary problem for the Fisher-KPP equation]{A free boundary problem for the Fisher-KPP equation with a given moving boundary}
\author[
H. Matsuzawa]{
Hiroshi Matsuzawa$^\ddag$}
\thanks{2010 Mathematics Subject Classification. 35R35, 35K20, 35K55, 35K57, 35K58}
\thanks{{\it Key words and phrases.} free boundary problem, logistic equation, Fisher-KPP equation}
\thanks{$^\ddag$ National Institute of Technology, Numazu College, 3600 Ooka, Numazu City, Shizuoka 410-8501, Japan. (Email: hmatsu@numazu-ct.ac.jp)}
\thanks{The author was partly supported by JSPS KAKENHI Grant-in-Aid for Scientific Research (C) 17K05340.}
\date{\today}
\usepackage{amsmath, enumerate, color, mathrsfs, cases}




\makeatletter
\renewcommand{\theequation}{%
\thesection.\arabic{equation}}
\@addtoreset{equation}{section}
\makeatother
\theoremstyle{definition}

\theoremstyle{plain}

\newtheorem{theorem}{Theorem}
\newtheorem{proposition}{Proposition}[section]
\newtheorem{thm}[proposition]{Theorem}
\newtheorem{corollary}[proposition]{Corollary}
\newtheorem{lemma}[proposition]{Lemma}

\theoremstyle{definition}
\newtheorem{remark}[proposition]{Remark}

\begin{document}
\begin{abstract}
We study free boundary problem of Fisher-KPP equation $u_t=u_{xx}+u(1-u),\ t>0,\ ct<x<h(t)$.
The number $c>0$ is a given constant, $h(t)$ is a free boundary which is determined by the Stefan-like 
condition. This model may be used to describe the spreading of a non-native species over a one dimensional habitat. The free boundary $x=h(t)$ represents the spreading front. 
In this model, we impose zero Dirichlet condition at left moving boundary $x=ct$. This means that 
the left boundary of the habitat is a very hostile environment and that the habitat is eroded away by the left moving boundary at constant speed $c$.  

In this paper we will give a trichotomy result, that is, for any initial data, exactly one of the three behaviours, vanishing, spreading and transition, happens. This result is related to the results 
appears in the free boundary problem for the Fisher-KPP equation with a shifting-environment, which was considered by Du, Wei and Zhou \cite{DWZ}. However the vanishing in our problem is different from that in \cite{DWZ}
 because in our vanishing case, the solution is not global-in-time.
\end{abstract}
\maketitle
\section{Introduction and Main Results}
We consider the following free boundary problem for the Fisher-KPP equation:
\begin{equation}\label{fbp}
\begin{cases}
u_t=u_{xx}+u(1-u),          &t>0,\ ct<x<h(t),\\
u(t,ct)=u(t,h(t))=0,      &t>0, \\
h'(t)=-\mu u_x(t,h(t)),   &t>0, \\
h(0)=h_0,\ u(0,x)=u_0(x), &0\leq x\leq h_0,
\end{cases}
\end{equation}
where $c$, $\mu$ and  $h_0$ are given positive constants, so $x=ct$ is a given forced moving boundary with speed $c$. The right moving boundary 
$x=h(t)$ is to be determined together with $u(t,x)$.
Initial function $u_0$ belongs to $\mathscr{X}(h_0)$ for some $h_0>0$, where
\begin{align*}
\mathscr{X}(h_0):=\left\{\phi\in C^2[0, h_0]:\begin{array}{l}\phi(0)=\phi(h_0)=0,\ \\ \phi'(0)>0,\ \phi'(h_0)<0,\ \phi(x)>0\ {\rm in}\ (0,h_0) \end{array}\right\}.
\end{align*}
For any $h_0>0$ and $u_0\in\mathscr{X}(h_0)$, we say a pair $(u(t,x), h(t))$ a classical solution of \eqref{fbp} on time interval $[0, T]$ for some $T>0$ 
if it satisfies $u\in C^{1,2}(G_T)$ and  $h\in C^1([0,T])$ and all the identities in \eqref{fbp} are satisfied pointwisely where 
\begin{align*}
G_T:=\{(t,x) : t\in(0,T],\ x\in [ct, h(t)]\}.
\end{align*}

This model may be used to describe the spreading of a new or invasive species with population density $u(t,x)$ over a one dimensional habitat. 
The free boundary $x=h(t)$ represents the spreading front. The behavior of the free boundary is determined by the Stefan-like condition which implies 
that the population pressure at the free boundary is driving force of the spreading front. In this model, we impose zero Dirichlet boundary condition at left moving boundary $x=ct$. This means that 
the left boundary of the habitat is a very hostile environment for the species and that the habitat is eroded away by the left moving boundary at constant speed $c$. 

Recently, problem (\ref{fbp}) with $c=0$ was studied in pioneering paper \cite{DLI}(in which Neumann boundary condition is imposed at left fixed boundary $x=0$), \cite{KOY} and \cite{KY}. 
The authors showed  that (\ref{fbp}) has a unique solution which is defined for all $t>0$ and, as $t\to\infty$, the interval $[0, h(t)]$ converges to either a finite interval $[0, h_{\infty})$ or 
$[0, \infty)$. Moreover, in the former case, $u(t,x)\to 0$ uniformly in $x$, while in the latter case, $u(t,x)\to 1$ locally uniformly in $[0, \infty)$. See also \cite{DL} for the double 
fronts free boundary problem with monostable, bistable or combustion type nonlinearity. Moreover, in the case of spreading, it is shown in \cite{DLI, DL} that there exists $c^*=c^*(\mu)>0$ such that 
$\lim_{t\to\infty}(h(t)/t)=c^*$. In this sense, $c^*$ is called the asymptotic spreading speed of corresponding free boundary problems. In \cite{DL}, the authors showed that $c^*$ is determined 
by the unique solution pair $(c,q)=(c^*, q^*)$ of the following problem
\begin{align*}
\left\{
\begin{array}{l}
q''+cq+q(1-q)=0,\ \ z\in (-\infty, 0), \\
q(0)=0,\ q(-\infty)=1,\ q'(0)=-c/\mu,\ q(z)>0\ \ z\in(-\infty, 0).
\end{array}\right.
\end{align*}

Using a simple variation of the techniques in \cite{DLI}, we can see that for any $h_0>0$ and $u_0\in\mathscr{X}(h_0)$, \eqref{fbp}(or \eqref{fbp2} with quite general nonlinearity $f$) has a 
unique solution defined on some time interval $[0, T]$ and it can be extended to some wider time interval $[0, \overline{T}]$ with $\overline{T}>T$ whenever $\inf_{t\in[0,T]}(h(t)-ct)>0$ is satisfied
(see Proposition \ref{existence} and Lemma \ref{hprime}). Therefore, for any $h_0>0$ and $u_0\in\mathscr{X}(h_0)$ we can define the maximal existence time $T^*$ of solution to \eqref{fbp} in the following way:
\begin{align}\label{Tmax}
T^*:=\sup\{T>0:(u,h)\ {\rm is\ the\ solution\ to}\ \eqref{fbp}\ {\rm on}\ [0,T]\}.
\end{align}
We say $(u,h)$ is a classical solution of \eqref{fbp} on time interval $[0,T^*)$ if for any $T\in (0, T^*)$, $(u,h)$ is a classical solution of \eqref{fbp} on time interval $[0,T]$.

The main purpose of this paper is to study the  behavior of solutions to (\ref{fbp}). When $T^*=\infty$, the solution is 
global and so we can study its asymptotic behavior. On the other hand, in this problem, $T^*$ may be a finite number for the reason that $h(t)-ct\to 0$ as $t\nearrow T^*$, that is the habitat of the species may shrink to a 
single point. Such a phenomenon is observed first in free boundary problems considered by \cite{C, CLZ}. We concern with the following questions:
\begin{enumerate}[(1)]
\item When the situation that $T^*<\infty$ and $h(t)-ct\to 0$ as $t\nearrow T^*$ occur? 
\item Can the situation that $T^*=\infty$ and $h(t)-ct\to 0$ as $t\to \infty$ occur?
\item When $T^*<\infty$ and $h(t)-ct\to 0$ as $t\nearrow T^*$, how about the behavior of $u$ as $t\nearrow T^*$ is ? 
\item When $T^*=\infty$, reveal all possible long-time dynamical behavior of the solutions.
\end{enumerate}

Now we state our main theorems. First theorem is a trichotomy result for the case $0<c<c^*$.

\begin{theorem} Suppose that $0<c<c^*$ and $(u,h)$ is the unique solution of \eqref{fbp} on a time interval $[0, T^*)$ with $T^*$ maximal existence time. Then exactly one of the following happens:
\begin{enumerate}[{\rm (1)}]
\item {\bf Vanishing:} $T^*<\infty$, $\lim_{t\nearrow T^*}(h(t)-ct)=0$, 
\begin{align*}
\lim_{t\nearrow T^*}\left\{\max_{x\in [ct, h(t)]}u(t,x)\right\}=0.
\end{align*}
\item {\bf Spreading:} $T^*=\infty$, $\lim_{t\to\infty}(h(t)/t)=c^*$ and for any small $\varepsilon>0$
\begin{align*}
\lim_{t\to\infty}\left\{\max_{x\in[(c+\varepsilon)t, (c^*-\varepsilon)t]}|u(t,x)-1|\right\}=0.
\end{align*}
\item {\bf Transition:} $T^*=\infty$, $\lim_{t\to\infty}(h(t)-ct)=L_c$ and
\begin{align*}
\lim_{t\to\infty}\left\{\max_{x\in[ct,h(t)]}|u(t,x)-V_c(x-h(t)+L_c)|\right\}=0,
\end{align*}
where $L_c>0$ are determined by a unique solution pair $(L, V)=(L_c, V_c)$ to the problem
\begin{align*}
\left\{ 
\begin{array}{l}
V''+cV'+V(1-V)=0,\ V>0\ \ {\rm for}\ \ z\in (0, L), \\
V(0)=V(L)=0,\ -\mu V'(L)=c.
\end{array}\right.
\end{align*}
\end{enumerate}
\end{theorem}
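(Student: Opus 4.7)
I would pass to the moving frame $y=x-ct$, setting $v(t,y):=u(t,y+ct)$ and $k(t):=h(t)-ct$. Then $(v,k)$ satisfies the autonomous free-boundary system
\begin{align*}
v_t &= v_{yy}+cv_y+v(1-v),\qquad 0<y<k(t),\\
v(t,0) &= v(t,k(t))=0,\qquad k'(t)=-\mu v_y(t,k(t))-c,
\end{align*}
whose only positive stationary solution is $(V_c,L_c)$. The three alternatives of the theorem then read, in the moving frame, as: $k(t)\searrow 0$ at finite time $T^*$ with $v\to 0$ (Vanishing); $k(t)\to\infty$ with linear rate $c^*-c$ (Spreading); and $(v,k)\to(V_c,L_c)$ (Transition). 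Mutual exclusivity is immediate from the incompatible asymptotics of $k$, so the content is exhaustiveness.

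\textbf{Vanishing and Spreading.} If $T^*<\infty$, Proposition~\ref{existence} and Lemma~\ref{hprime} identify $\inf_{[0,T^*)}k=0$ as the only obstruction to extension; combining the linear dominator $v_t\le v_{yy}+cv_y+v$ with the Dirichlet condition at \emph{both} ends of a shrinking interval, the principal Dirichlet eigenvalue of $-\partial_y^2-c\partial_y-1$ on $[0,k]$ blows up as $k\to 0$, and a comparison argument upgrades this to $\lim_{t\nearrow T^*}k(t)=0$ together with $\max_y v(t,y)\to 0$. If instead $T^*=\infty$ and $k(t)$ is unbounded, I would use $(V_c,L_c)$ as an ordered barrier: the free-boundary comparison principle (as in \cite{DLI,DL}) implies that once $k(t_0)>L_c$ with $v(t_0,\cdot)$ dominating the shifted $V_c$, this domination persists, and a compactly supported spreading subsolution forces $k(t)\to\infty$. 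A further comparison with the classical ($c=0$) Fisher-KPP free-boundary problem then yields $h(t)/t\to c^*$ and $u\to 1$ on the wedge $(c+\varepsilon)t<x<(c^*-\varepsilon)t$.

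\textbf{Transition and the main obstacle.} The remaining case is $T^*=\infty$ with $k$ bounded. Parabolic regularity makes the orbit precompact, each subsequential limit is a nonnegative stationary solution of the moving-frame problem, and by the ODE uniqueness stated in the theorem it must coincide with $(V_c,L_c)$. Upgrading this subsequential convergence to convergence of the full orbit, and in particular ruling out oscillation of $k$ around $L_c$, is the technical heart. I expect to need Angenent's zero-number theorem applied to the difference $v(t,\cdot)-V_c(\cdot-k(t)+L_c)$, together with the Stefan condition (which directly couples $k'(t)$ to the derivative gap $v_y(t,k(t))-V_c'(L_c)$), to show that the zero count eventually becomes zero and the orbit is trapped on the codimension-one stable set of $(V_c,L_c)$. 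This convergence step is the principal obstacle, since the vanishing and spreading cases reduce to comparison arguments already present in the literature.
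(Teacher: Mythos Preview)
Your moving-frame reduction and three-case split match the paper's architecture, and you correctly locate the heart of the matter in the transition case. But the ordering you propose for that case has a genuine gap. You write that parabolic regularity makes the orbit precompact and that ``each subsequential limit is a nonnegative stationary solution'', hence equals $(V_c,L_c)$. There is no Lyapunov functional here, and the $\omega$-limit set of an autonomous parabolic problem with a moving free boundary need not consist of equilibria; an invariant set is all you get from compactness alone. The paper does \emph{not} argue this way. It first proves, via a zero-number argument, that $\lim_{t\to\infty}k(t)$ exists: for each fixed $b\ne L_c$ one compares $v(t,\cdot)$ with the \emph{fixed} profile $V_c(\cdot-b+L_c)$, and a careful intersection-number argument (using the Stefan condition and the Hopf lemma at $y=k(t)$ each time $k(t)$ touches $b$) shows that $k(t)-b$ can change sign only finitely often. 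Only after $\lim k(t)=H_c^*$ is known does compactness produce an eternal solution $\hat w$ on $[-H_c^*,0]$ satisfying $\hat w_y(t,0)\equiv -c/\mu$ (this extra constraint comes precisely from $k'\to 0$), and a second zero-number argument on $\hat w - V_c(\cdot+L_c)$ then forces $H_c^*=L_c$ and $\hat w\equiv V_c$. So the zero-number input is needed \emph{before} the compactness step, not after.

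Two smaller points. Your proposed zero-number object $v(t,\cdot)-V_c(\cdot-k(t)+L_c)$ carries a $t$-dependent shift, so the difference does not satisfy a linear parabolic equation of the form $\eta_t=\eta_{yy}+c\eta_y+m\eta$: the time derivative of the shifted $V_c$ contributes an inhomogeneous term $k'(t)V_c'$, and Angenent's theorem does not apply directly. The paper's choice of a fixed shift $b$ avoids this. Also, in the vanishing case your eigenvalue remark gives $\max v\to 0$ \emph{once} $k\to 0$, but it does not by itself upgrade $\inf_{[0,T^*)}k=0$ to $\lim_{t\nearrow T^*}k(t)=0$; the paper closes that loop by showing (via an explicit quadratic barrier and a small-data vanishing criterion) that once $\|v(t_n,\cdot)\|_\infty$ is small along some sequence, a comparison forces $k\to 0$ at a definite finite time, which must then equal $T^*$.
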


If the initial function $u_0$ in \eqref{fbp} has the form $u_0=\sigma\phi$ with some fixed $\phi\in\mathscr{X}(h_0)$, we can obtain the following sharp threshold result.
\begin{theorem}
Suppose that the initial function $u_0$ in \eqref{fbp} has the form $u_0=\sigma\phi$ with some fixed $\phi\in\mathscr{X}(h_0)$. Then 
there exists $\overline{\sigma}\in(0,\infty]$ such that vanishing happens when $0<\sigma<\overline{\sigma}$, spreading happens when $\sigma>\overline{\sigma}$, and transition happens when 
$\sigma=\overline{\sigma}$.
\end{theorem}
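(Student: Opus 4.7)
The proof proceeds by combining strict monotonicity of the map $\sigma\mapsto(u_\sigma,h_\sigma)$, openness of the vanishing and spreading regimes, and the trichotomy from the preceding theorem. First I would record that the Stefan-type comparison principle underlying the existence theory gives, for $0<\sigma_1<\sigma_2$, the strict inequalities $u_{\sigma_1}<u_{\sigma_2}$ on the common domain, $h_{\sigma_1}(t)<h_{\sigma_2}(t)$ on the common existence interval, and $T^*_{\sigma_1}\leq T^*_{\sigma_2}$.

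Next I would prove that vanishing occurs for all sufficiently small $\sigma>0$. The mechanism is a Dirichlet eigenvalue criterion: writing the problem in the moving frame $y=x-ct$ and linearising at $0$, the principal eigenvalue of $-\partial_{yy}-c\partial_y-1$ on an interval of length $\ell$ is positive precisely when $\ell<L^{*}:=\pi/\sqrt{1+c^{2}/4}$. Once $\ell(t):=h(t)-ct$ falls below $L^{*}$, $u$ decays exponentially in the moving frame, $h'(t)=-\mu u_x(t,h(t))$ becomes uniformly less than $c$, and hence $\ell(t)\to 0$ in finite time. For $\sigma$ small, a linearised upper bound keeps $u$ small enough that $h'(t)<c$ from $t=0$ onward, forcing $\ell(t)$ to decrease below $L^{*}$ before $T^{*}_\sigma$, so vanishing occurs.

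With $\Sigma_V=\{\sigma>0:\text{vanishing at }u_\sigma\}$, $\Sigma_S=\{\sigma>0:\text{spreading at }u_\sigma\}$, $\Sigma_T=\{\sigma>0:\text{transition at }u_\sigma\}$, the trichotomy yields $\Sigma_V\sqcup\Sigma_S\sqcup\Sigma_T=(0,\infty)$, and monotonicity forces $\Sigma_V$ to be a left interval from $0$ and $\Sigma_S$ a right interval to $\infty$. I would then prove that both are open. For $\sigma_0\in\Sigma_S$, pick $t_1$ large enough that $u_{\sigma_0}(t_1,\cdot)$ contains a plateau close to $1$ on an interval long enough to dominate a compactly supported stationary subsolution of the free boundary problem; continuous dependence on the compact set $[0,t_1]$ transfers the plateau to $u_\sigma$ for $\sigma$ close to $\sigma_0$, and the standard sub-solution comparison for the free boundary problem forces $\sigma\in\Sigma_S$. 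For $\sigma_0\in\Sigma_V$, choose $t_1<T^{*}_{\sigma_0}$ with $\ell_{\sigma_0}(t_1)<L^{*}/2$; continuous dependence on $[0,t_1]$ gives $\ell_\sigma(t_1)<L^{*}$ for $\sigma$ near $\sigma_0$, and the eigenvalue criterion forces $\sigma\in\Sigma_V$.

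Setting $\overline{\sigma}:=\sup\Sigma_V\in(0,\infty]$, openness of $\Sigma_V$ gives $\Sigma_V=(0,\overline{\sigma})$; when $\overline{\sigma}<\infty$, openness of $\Sigma_S$ with the monotone structure forces $\Sigma_S=(\overline{\sigma},\infty)$, leaving $\Sigma_T=\{\overline{\sigma}\}$ by the trichotomy, while if $\overline{\sigma}=\infty$ then $\Sigma_S=\Sigma_T=\emptyset$. The main obstacle I expect is the openness of $\Sigma_V$: vanishing is a finite-time phenomenon in which the habitat itself collapses, so continuous dependence of the pair $(u,h)$ on $\sigma$ must be obtained on a compact time interval strictly before $T^{*}_{\sigma_0}$, with both $u$ and the moving boundary $h$ controlled jointly. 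The cleanest route is to transform to a fixed reference interval via a change of variables such as $y=(x-ct)/\ell(t)$ and apply standard parabolic Schauder estimates in the transformed problem, taking care that the nonlinear term $\ell'(t)$ appearing after the change of variables is uniformly controlled on $[0,t_1]$.
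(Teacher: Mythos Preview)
Your overall architecture is close to the paper's: define the vanishing and spreading sets, use monotonicity to make them left and right intervals, show each is open, and appeal to the trichotomy. But there is a genuine gap at the very end. From openness of $\Sigma_V$ and $\Sigma_S$ together with monotonicity you only get $\Sigma_V=(0,\sigma_*)$ and $\Sigma_S=(\sigma^*,\infty)$ for some $\sigma_*\le\sigma^*$; nothing you have written rules out $\sigma_*<\sigma^*$, which would make $\Sigma_T=[\sigma_*,\sigma^*]$ an interval rather than a single point. Your sentence ``openness of $\Sigma_S$ with the monotone structure forces $\Sigma_S=(\overline{\sigma},\infty)$'' is precisely the unjustified step. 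The paper supplies this missing ingredient as a separate lemma: if $\sigma_1<\sigma_2$ were both transition parameters, then by strict monotonicity and a small spatial shift one shows $h_{\sigma_1}(t)+\tau_0\le h_{\sigma_2}(t)$ for all large $t$ and some fixed $\tau_0>0$, which is incompatible with both $h_{\sigma_i}(t)-ct\to L_c$. You need an argument of this kind.

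There is also a more subtle point you gloss over in the openness of $\Sigma_S$. When $\sigma_0\in\Sigma_S$ you pick a large $t_1$ and invoke continuous dependence on $[0,t_1]$; but for $\sigma$ slightly \emph{below} $\sigma_0$ you must first know that $(u_\sigma,h_\sigma)$ survives up to time $t_1$, i.e.\ that $T^*_\sigma>t_1$. Since vanishing in this model is a finite-time collapse, this is not automatic. The paper isolates this as a separate lemma (if $T^*_{\sigma_0}=\infty$ then $\sup_{\sigma<\sigma_0}T^*_\sigma=\infty$), proved via uniform parabolic estimates and the local existence time bound; your change-of-variables remarks at the end address the $\Sigma_V$ case but not this one. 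Finally, a minor slip: the principal eigenvalue of $-\partial_{yy}-c\partial_y-1$ on $(0,\ell)$ is $\pi^2/\ell^2+c^2/4-1$, so the critical length is $L^*=\pi/\sqrt{1-c^2/4}$, not $\pi/\sqrt{1+c^2/4}$.
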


When $c\ge c^*$, vanishing always happens.

\begin{theorem} Assume that $c^*\le c$ and $(u,h)$ is the unique solution of \eqref{fbp} on a time interval $(0, T^*)$ with $T^*$ maximal existence time. Then we have $T^*<\infty$ and 
$\lim_{t\nearrow T^*}(h(t)-ct)=0$ and $\lim_{t\nearrow T^*}\sup_{x\in [ct, h(t)]}u(t,x)=0$.
\end{theorem}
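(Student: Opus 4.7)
My plan is to dominate $(u,h)$ by a super-solution built from the semi-wave $q^*$ of speed $c^*$. After the a priori bound $u\le\max(1,\|u_0\|_\infty)$, I may assume WLOG $\|u_0\|_\infty<1$ by restarting at a slightly later time. Choose $B\ge h_0$ large enough that $q^*(x-B)\ge u_0(x)$ on $[0,h_0]$ (possible since $q^*(z)\to 1$ as $z\to-\infty$), and set $\bar u(t,x):=q^*(x-c^*t-B)$, $\bar h(t):=c^*t+B$ on $\{ct\le x\le\bar h(t)\}$. Because $q^*$ satisfies the semi-wave ODE, $\bar u$ solves the PDE with equality; at $x=\bar h(t)$, $\bar u=0$ and $\bar h'(t)=c^*=-\mu (q^*)'(0)=-\mu\bar u_x$; at $x=ct$, $\bar u(t,ct)=q^*((c-c^*)t-B)\ge 0=u(t,ct)$. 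Hence $(\bar u,\bar h)$ is a super-solution of \eqref{fbp}, and the comparison principle yields
$$h(t)\le c^*t+B,\qquad u(t,x)\le q^*(x-c^*t-B),\qquad t\in[0,T^*).$$

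When $c>c^*$, combining $ct<h(t)\le c^*t+B$ forces $t<B/(c-c^*)$, so $T^*\le B/(c-c^*)<\infty$ and continuity of $h$ gives $h(T^*)-cT^*=0$. On $[ct,h(t)]$ the argument $x-c^*t-B$ lies in $[(c-c^*)t-B,0]$, an interval collapsing to $\{0\}$ as $t\nearrow T^*$; since $q^*(0)=0$ this delivers $\sup u(t,\cdot)\to 0$.

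When $c=c^*$, the super-solution only yields $k(t):=h(t)-c^*t\le B$, so I argue by contradiction: assume $T^*=\infty$. In the moving frame, $v(t,y):=u(t,y+c^*t)$ solves $v_t=v_{yy}+c^*v_y+v(1-v)$ on $0<y<k(t)$ with Dirichlet data and $k'(t)=-\mu v_y(t,k(t))-c^*$. Parabolic Schauder estimates and $k(t)\in(0,B]$ imply precompactness of the time-translates of $(v,k)$, so along a subsequence $t_n\to\infty$ we have $(v(t_n,\cdot),k(t_n))\to(V,L)$ with $L\in[0,B]$. If $L>0$, a Lyapunov-functional (or Barbalat-type) argument upgrades $V$ to a stationary pair $V''+c^*V'+V(1-V)=0$ on $(0,L)$ with $V(0)=V(L)=0$ and $-\mu V'(L)=c^*$; by the defining property of $c^*$ such a pair exists only at $L=\infty$, and $V\equiv 0$ is excluded by $-\mu V'(L)=c^*>0$, a contradiction. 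If $L=0$, then $v(t_n,\cdot)\to 0$ uniformly and parabolic boundary regularity forces $v_y(t_n,k(t_n))\to 0$, so $k'(t_n)\to -c^*<0$; hence $k$ is eventually decreasing at linear rate and reaches $0$ in finite time, again contradicting $T^*=\infty$. Either way $T^*<\infty$; the same analysis yields $k(t)\to 0$ as $t\nearrow T^*$, and once $k(t)<\pi/\sqrt{1-(c^*)^2/4}$, a principal-eigenfunction super-solution drives $\sup u(t,\cdot)\to 0$.

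The delicate piece is the threshold $c=c^*$, where the semi-wave super-solution is an exact solution and finite-time vanishing does not follow directly from Step~1. One must instead run the $\omega$-limit argument above and invoke the non-existence of finite-$L$ steady states at $c=c^*$; justifying that cluster points satisfy both the stationary PDE and the limiting Stefan condition is the subtlest ingredient of the proof.
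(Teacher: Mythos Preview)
Your treatment of $c>c^*$ is essentially the paper's: the paper's Proposition~\ref{upperestspeed} is exactly your semi-wave super-solution, giving $h(t)\le c^*t+C_0$ and hence $T^*<\infty$; the remaining conclusions then follow from Proposition~\ref{iff} and Lemma~\ref{fte1}, not from the ad~hoc arguments you sketch.

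For $c=c^*$ your argument has a real gap. You take a subsequential limit $(V,L)$ and then assert that ``a Lyapunov-functional (or Barbalat-type) argument upgrades $V$ to a stationary pair.'' No such functional is produced, and for the moving-frame free boundary problem $v_t=v_{yy}+c^*v_y+v(1-v)$ on $(0,k(t))$ with $k'=-\mu v_y(t,k(t))-c^*$ the natural weighted energy $\int_0^{k(t)}e^{c^*y}\bigl(\tfrac12 v_y^2-F(v)\bigr)\,dy$ is \emph{not} monotone, because its derivative contains the indefinite-sign boundary term $-\tfrac{1}{2\mu^2}e^{c^*k}(k'+c^*)^2k'$. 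Without monotonicity you cannot conclude that $\hat k$ is constant along the entire-solution limit, and without $\hat k\equiv L$ you cannot pass the Stefan condition to the limit. Your $L=0$ branch is also broken: knowing $k'(t_n)\to-c^*$ only along a subsequence does not give ``$k$ is eventually decreasing at linear rate.'' (That branch \emph{can} be repaired: $k(t_n)\to0$ forces $\|u(t_n,\cdot)\|_\infty\to0$ by the proof of Lemma~\ref{fte1}, and then Corollary~\ref{suffvanish2} restarted at $t_n$ yields $T^*<\infty$.)

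The paper avoids the Lyapunov issue entirely. First it proves that the \emph{full} limit $H_{c^*}^*:=\lim_{t\to\infty}H_{c^*}(t)$ exists, by a zero-number argument: for each $b>0$ the difference $v(t,z)-q^*(z-b)$ satisfies a linear parabolic equation, and each time $H_{c^*}(t)$ crosses $b$ the zero count strictly drops, so there are only finitely many crossings. Once the full limit exists, $H_{c^*,n}'\to0$ automatically, and the subsequential limit $\hat w$ is an entire solution on the \emph{fixed} interval $[-H_{c^*}^*,0]$ satisfying both $\hat w(t,0)=0$ and $\hat w_y(t,0)=-c^*/\mu$ for all $t\in\mathbb{R}$. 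The contradiction is then again zero-number, not non-existence of a steady state: since $q^*(0)=0$ and $(q^*)'(0)=-c^*/\mu$, the difference $\tilde\eta(t,y)=\hat w(t,y)-q^*(y)$ has a \emph{degenerate} zero at $y=0$ for every $t$, which is impossible for a nontrivial solution of a linear parabolic equation with $\tilde\eta(t,-H_{c^*}^*)<0$. Note that the paper never needs $\hat w$ to be stationary.
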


The trichotomy result of Theorem A is related to the result of \cite{DWZ}, where a free boundary problem of Fisher-KPP equation with shifting-environment is considered. The sifting-environment there is given in the 
nonlinearity with the form $A(x-ct)u-bu^2$, 
where $A(\xi)$ is a Lipschitz continuous function on $\mathbb{R}^1$ which satisfies
\begin{align*}
A(\xi)=\left\{
\begin{array}{ll}
a_0, & \xi<-l_0, \\
a, & \xi\ge 0,
\end{array}\right.
\end{align*}
and $A(\xi)$ is strictly increasing on $[-l_0, 0]$. Here $l_0$, $a_0$ and $a$ are constants, with $l_0\ge 0$, $a_0\le 0$ and $a>0$. In the model, set 
$\{x\in\mathbb{R}^1: x-ct\le -l_0\}$ represents the unfavourable range of the environment and the range move with constant speed $c>0$, which corresponds to the very hostile boundary $x=ct$ 
of our model. However, comparing with the results in \cite{DWZ}, the solutions to our problem become non-global in the vanishing case. 
This is  significantly different from the model of \cite{DWZ}. As far as I know, there are relatively few free boundary problem of this kind which have 
non-global solutions (see \cite{C,CLZ}). The appearance of non-global solutions can make our model more realistic because some species become extinct in finite time due to shrinking  of 
their habitats.       

Furthermore, from a mathematical point of view, our main results can be seen as a drastic change of classification of behaviors of solutions, which is caused only by replacing of 
left fixed boundary $x=0$ by moving boundary $x=ct$, but remaining the nonlinearity unchanged, in the problems considered earlier in \cite{DLI, KY, KOY}. 

Because, in the present paper, some approaches rely on the special form of the logistic nonlinearity, 
it should be more challenging to consider the problem \eqref{fbp} with logistic nonlinearity $u(1-u)$
 replaced by general monostable, bistable or combustion type nonlinearity. This will be considered in forthcoming paper \cite{KM2}.
 
The rest of this paper is organized as follows. In section 2, we will present some basic results. Section 3 will deal with the situation $T^*<\infty$. Section 4 will be devoted to the proof of Theorem A. 
In section 5, we will prove Theorem B.  

\section{Preliminary results}

In this section we give some preliminary results. The results here except Proposition \ref{compact-support-wave-ex}, Lemma \ref{auxelliptic} and 
Proposition \ref{upperestspeed} valid for rather general nonlinearity. In this section, we assume that
\begin{align}\label{assumptionf}
f\in C^1,\ \ f(0)=f(1)=0,\ \ f'(1)<0,\ \ f(u)<0\ \ {\rm for}\ \ u>1
\end{align}
and consider
\begin{equation}\label{fbp2}
\begin{cases}
u_t=u_{xx}+f(u),          &t>0,\ ct<x<h(t),\\
u(t,ct)=u(t,h(t))=0,      &t>0, \\
h'(t)=-\mu u_x(t,h(t)),   &t>0, \\
h(0)=h_0,\ u(0,x)=u_0(x), &0\leq x\leq h_0,
\end{cases}
\end{equation}
instead of \eqref{fbp}. 

\subsection{Existence of the local solution} 
The local existence and uniqueness result can be proved by using contraction mapping principle as in \cite{DLI}.
\begin{proposition}\label{existence}
For any $h_0>0$, $u_0\in\mathscr{X}(h_0)$ and $\alpha\in(0,1)$, there exists $T>0$ such that problem \eqref{fbp2} admits a unique solution $(u, h)$ defined on $[0, T]$ with 
\begin{align*}
u\in C^{\frac{1+\alpha}{2}, 1+\alpha}(\overline{G_T}),\ h\in C^{1+\frac{\alpha}{2}}([0, T]),
\end{align*}
where $G_T:=\{(t,x)\in\mathbb{R}^2 : t\in (0, T], x\in [ct, h(t)]\}$. Moreover we have
\begin{align*}
\|u\|_{C^{\frac{1+\alpha}{2}, 1+\alpha}(G_T)}+\|h\|_{C^{1+\frac{\alpha}{2}}([0,T])}\le C,
\end{align*}
where $C$ and $T$ depend only on $h_0$, $\alpha$ and $\|u_0\|_{C^2[0,h_0]}$.
\end{proposition}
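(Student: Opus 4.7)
The plan is to follow the Du--Lin contraction-mapping scheme \cite{DLI}, with the modification that the domain now has \emph{two} moving endpoints. First I would straighten the moving strip by the change of variables
\[
y = \frac{h_0 (x - ct)}{h(t) - ct}, \qquad v(t,y) = u(t,x),
\]
which carries the unknown region $\{(t,x): 0 \le t \le T,\ ct \le x \le h(t)\}$ onto the fixed rectangle $[0,T]\times[0,h_0]$. A direct computation shows that \eqref{fbp2} becomes a quasilinear parabolic equation
\[
v_t = A(t)\, v_{yy} + B(t,y)\, v_y + f(v), \qquad v(t,0) = v(t,h_0) = 0, \qquad v(0,y) = u_0(y),
\]
with smooth coefficients $A(t) = (h_0/(h(t)-ct))^2$ and $B(t,y)$ depending on $h(t)$, $h'(t)$, $c$, while the Stefan condition turns into
\[
h'(t) = -\mu\, \frac{h_0}{h(t)-ct}\, v_y(t, h_0), \qquad h(0) = h_0.
\]
The only structural difference from the Du--Lin setting is the extra drift $B$ produced by the non-zero speed of the left boundary; since $B$ is bounded whenever $h(t)-ct$ stays away from $0$, it does not affect the abstract scheme.

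Next I would set up the iteration. Fix $\alpha \in (0,1)$ and choose constants $M,\delta > 0$ (to be specified). For $0 < T \le \delta$ define the complete metric space
\[
\Sigma_T := \bigl\{ h \in C^{1}([0,T]) : h(0) = h_0,\ h'(0) = -\mu u_0'(h_0),\ \|h - h_0 - h'(0)t\|_{C^1([0,T])} \le M T^{\alpha/2} \bigr\}.
\]
For $T$ small any $h \in \Sigma_T$ satisfies $h(t) - ct \ge h_0/2$, which keeps the equation uniformly parabolic. Given such an $h$, I would solve the straightened parabolic problem for $v$ and then define
\[
\mathcal{F}(h)(t) := h_0 - \mu \int_0^t \frac{h_0}{h(s) - cs}\, v_y(s, h_0)\, ds.
\]
A fixed point of $\mathcal{F}$ is precisely the sought-after $h$, and the corresponding $v$ yields $u$ through the inverse change of variables.

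The analytic heart of the argument is the application of standard Schauder theory to the straightened equation. Since the coefficients are uniformly Hölder continuous on the rectangle, the compatibility $u_0(0) = u_0(h_0) = 0$ holds, and $u_0 \in C^2[0,h_0]$, the $L^p$ and then Schauder estimates yield $v \in C^{(1+\alpha)/2, 1+\alpha}$ with a bound
\[
\|v\|_{C^{(1+\alpha)/2,1+\alpha}} \le C_1\bigl(h_0, \alpha, \|u_0\|_{C^2}\bigr),
\]
independent of $h \in \Sigma_T$. This controls $v_y(\cdot, h_0)$ in $C^{\alpha/2}([0,T])$, so $\mathcal{F}(h) - h_0 - h'(0) t$ is $O(T^{\alpha/2})$ in $C^1$; choosing $M$ suitably makes $\mathcal{F}$ map $\Sigma_T$ into itself. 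A parallel estimate comparing $\mathcal{F}(h_1)$ and $\mathcal{F}(h_2)$, using continuous dependence of the straightened parabolic problem on its coefficients, produces a Lipschitz constant bounded by $C_2 T^{\alpha/2}$, so for $T$ small enough $\mathcal{F}$ is a strict contraction. The Banach fixed point theorem delivers a unique solution $(u,h)$ with the stated regularity and quantitative bound.

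The main obstacle to execute is the Lipschitz comparison: two different functions $h_1, h_2$ yield straightened problems on the same rectangle but with different coefficients $A_i, B_i$ and different Stefan integrands. This is the routine but tedious calculation: one writes the equation for $v_1 - v_2$, treats the coefficient differences as forcing terms, and applies Schauder estimates to bound $\|v_1 - v_2\|_{C^{(1+\alpha)/2,1+\alpha}}$ by $\|h_1 - h_2\|_{C^1}$ times $T^{\alpha/2}$. The presence of the extra drift $B$ (arising from $c \ne 0$) contributes one more analogous term but nothing conceptually new, which is why the paper can simply cite \cite{DLI}. Higher-order compatibility at the corners $(0,0)$ and $(0,h_0)$ is not required for the Hölder regularity stated.
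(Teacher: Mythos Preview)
Your proposal is correct and matches the paper's approach: the paper gives no detailed proof, stating only that the result ``can be proved by using contraction mapping principle as in \cite{DLI},'' and your sketch is precisely the Du--Lin scheme adapted via the straightening $y = h_0(x-ct)/(h(t)-ct)$ to handle the additional moving left boundary. The extra drift term you identify is the only novelty, and as you note it is harmless for small $T$.
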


\begin{remark}
As in \cite{DLI}, by applying the Schauder estimate to the equivalent fixed boundary value problem used in the proof, we can derive an additional regularity for $u$, 
namely $u\in C^{1+\frac{\alpha}{2}, 2+\alpha}(D_T)$.
\end{remark}

Next two lemmas are about a priori estimates for $u$ and $h'$.

\begin{lemma}\label{ODE-estimate}
Suppose that $(u,h)$ be a global solution to \eqref{fbp2}. Then for any $\delta\in (0, -f'(1))$ there exists $M>0$ such that $u(t,x)\le 1+Me^{-\delta t}$ for $t>0$ and $x\in [ct, h(t)]$.
\end{lemma}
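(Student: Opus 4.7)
The plan is to bound $u(t,x)$ above by the solution $\overline{u}(t)$ of the spatially independent ODE $\overline{u}'=f(\overline{u})$, with a suitably large initial value, and then to show that $\overline{u}(t)-1$ decays exponentially with rate arbitrarily close to $-f'(1)$.

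First I would set $\overline{u}_0:=\max\{1,\|u_0\|_{L^\infty(0,h_0)}\}$ and define $\overline{u}(t)$ as the solution of $\overline{u}'(t)=f(\overline{u}(t))$, $\overline{u}(0)=\overline{u}_0$. Since $f(1)=0$ and $f(u)<0$ for $u>1$, the trajectory $\overline{u}(t)$ is non-increasing and stays in $[1,\overline{u}_0]$, converging monotonically to $1$ as $t\to\infty$. Viewed as a spatially constant function on $G_T$ (for any $T<\infty$), $\overline{u}$ satisfies $\overline{u}_t-\overline{u}_{xx}-f(\overline{u})=0$, and moreover $\overline{u}(0)\geq u_0(x)$ on $[0,h_0]$ while $\overline{u}(t)\geq 1>0=u(t,ct)=u(t,h(t))$ on the lateral boundaries. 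A standard maximum principle / comparison argument for the moving-boundary problem (of the type already used in the free-boundary literature cited, e.g.\ \cite{DLI}) then gives $u(t,x)\leq\overline{u}(t)$ on $G_T$ for every $T>0$. Note that since $\overline{u}$ does not satisfy the Stefan condition, it is used only as an upper barrier for $u$ on the fixed-time strip, not as a free-boundary super-solution; this is the cleanest way to avoid issues with the moving front.

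The remaining task is to estimate $\overline{u}(t)-1$. Fix $\delta\in(0,-f'(1))$. Since $f\in C^1$ with $f(1)=0$ and $f'(1)<0$, Taylor expansion at $1$ gives $\eta>0$ such that
\begin{equation*}
f(y)\leq -\delta(y-1)\qquad\text{for all }y\in[1,1+\eta].
\end{equation*}
Because $\overline{u}(t)\searrow 1$, there is a finite time $t_0\geq 0$ with $\overline{u}(t_0)\leq 1+\eta$. For $t\geq t_0$, setting $w(t):=\overline{u}(t)-1\geq 0$, we obtain $w'(t)=f(1+w(t))\leq -\delta w(t)$, whence $w(t)\leq \eta\, e^{-\delta(t-t_0)}$ by Gronwall. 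On $[0,t_0]$ we trivially have $w(t)\leq \overline{u}_0-1\leq(\overline{u}_0-1)e^{\delta t_0}\,e^{-\delta t}$. Choosing
\begin{equation*}
M:=\max\bigl\{\eta\,e^{\delta t_0},\,(\overline{u}_0-1)e^{\delta t_0}\bigr\}
\end{equation*}
yields $\overline{u}(t)-1\leq Me^{-\delta t}$ for all $t\geq 0$, and combining with the comparison step finishes the proof.

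The only real point of care is justifying the comparison $u\leq\overline{u}$ on the time-dependent domain $G_T$ with both a moving Dirichlet boundary on the left and a free boundary on the right; but since the upper barrier is spatially constant and positive, neither the Stefan condition nor the hostile left boundary poses an obstruction. Everything else is elementary ODE analysis, and the constants $t_0$, $\eta$, $M$ depend only on $f$, $\delta$ and $\|u_0\|_{L^\infty}$.
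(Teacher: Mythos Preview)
Your proof is correct and follows essentially the same approach as the paper: compare $u$ with the spatially constant ODE solution $\overline{u}(t)$ starting from $\max\{1,\|u_0\|_\infty\}$, then use the linearization $f(y)\le -\delta(y-1)$ near $y=1$ to obtain exponential decay of $\overline{u}(t)-1$. Your handling of the constant $M$ (splitting into $[0,t_0]$ and $[t_0,\infty)$) is in fact slightly more careful than the paper's, which tacitly takes $M$ large enough to cover the initial layer.
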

\begin{proof}
We first note that by the condition on $f$, for any $\delta\in(0, -f'(1))$, there exists $\rho=\rho(\delta)>0$ such that 
\begin{align}\label{flinear}
f(u)\ge\delta (1-u)\ \ (u\in[1-\rho, 1]),\ \ \ f(u)\le\delta(1-u)\ \ (u\in[1, 1+\rho]).
\end{align}
Consider the solution to the following initial value problem of ordinary differential equation:
\begin{align*}
\left\{
\begin{array}{l}
\displaystyle\frac{d\overline{u}}{dt}=f(\overline{u}), \\ 
\overline{u}(0)=C_1:=\max\{1, \|u_{0}\|_{C[0,h_{0}]}\}.
\end{array}
\right.
\end{align*}
Then the standard comparison principle shows that
\begin{align}\label{comparewithODEsol}
u(t,x)\le\overline{u}(t)\ \ {\rm for}\ \ t>0,\ ct<x<h(t).
\end{align}
We note that $\overline{u}(t)$ is monotone decreasing and it converges to $1$ as $t\to\infty$. Hence there exists $T>0$ such that $\overline{u}(t)\le 1+\rho$ for $t\ge T$. It follows from \eqref{flinear} that $\overline{u}=\overline{u}(t)$ 
satisfies
\begin{align*}
\left\{
\begin{array}{l}
\displaystyle\frac{d\overline{u}}{dt}=f(\overline{u})\le\delta(1-\overline{u}), t>T, \\ 
\overline{u}(T)\le 1+\rho.
\end{array}
\right.
\end{align*}
and then $\overline{u}(t)\le 1+Me^{-\delta t}$, where $M=\rho e^{\delta T}$. From \eqref{comparewithODEsol}, we obtain the desired inequality.
\end{proof}

\begin{lemma}\label{hprime}
Let $(u,h)$ be any solution of \eqref{fbp2} for $0<t<T_0$ with some $T_0\in (0,\infty)$. Then the solution satisfies
\begin{align*}
&0<u(t,x)\le C_1\quad \mbox{for}\ \ 0<t<T_0,\ ct<x<h(t),\\
&0<h'(t)\le \mu C_2\quad \mbox{for}\ \ 0<t<T_0,
\end{align*}
where $C_1$ and $C_2$ are positive constants independent of $T_0$. 

Moreover the solution can be extended to some interval $[0, \overline{T}]$ with $\overline{T}>T_0$ if $\inf_{t\in (0,T_0)}[h(t)-ct]>0$.
\end{lemma}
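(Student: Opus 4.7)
The plan is to handle the three assertions in turn, following the general strategy of \cite{DLI}. For the bound on $u$, I would reuse the argument of Lemma \ref{ODE-estimate}: setting $C_1 := \max\{1, \|u_0\|_{C[0,h_0]}\}$, the constant function $\overline u \equiv C_1$ is a supersolution since $f(C_1) \le 0$ by \eqref{assumptionf}, and it dominates $u$ on the parabolic boundary (where $u = 0$ on the lateral parts and $u_0 \le C_1$ at $t = 0$); parabolic comparison then gives $u \le C_1$. Positivity of $u$ in the interior is the strong maximum principle applied to the linear operator $L v := v_t - v_{xx} - [f(u)/u]\, v$ (interpreted as $0$ at zeros of $u$, or via a direct linearization), combined with $u_0 > 0$ on $(0, h_0)$.

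For the estimate on $h'$, positivity comes from the Hopf boundary-point lemma at $(t, h(t))$: the domain satisfies an interior sphere condition there, $u(t, h(t)) = 0$ and $u > 0$ inside, so $u_x(t, h(t)) < 0$ and $h'(t) = -\mu u_x(t, h(t)) > 0$. For the upper bound, I would build a local supersolution near the free boundary. Fix $M > 0$ to be chosen and set
\[
\Omega_M := \{(t,x) : 0 < t < T_0,\ h(t) - M^{-1} < x < h(t)\}, \qquad w(t,x) := C_1 \bigl[ 2M(h(t) - x) - M^2 (h(t) - x)^2 \bigr].
\]
A direct computation gives
\[
w_t - w_{xx} - f(w) = 2 M C_1 h'(t) \bigl[ 1 - M(h(t) - x) \bigr] + 2 M^2 C_1 - f(w).
\]
Inside $\Omega_M$ the factor $1 - M(h(t) - x)$ lies in $(0,1)$ and $h'(t) > 0$, so the first term is non-negative, and the remaining two terms are non-negative once $M^2 \ge (2 C_1)^{-1}\,\max_{[0, C_1]} |f|$. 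Thus $w$ is a supersolution in $\Omega_M$. On the parabolic boundary: $w = C_1 \ge u$ on $x = h(t) - M^{-1}$ by the first part, $w = 0 = u$ on $x = h(t)$, and by choosing $M$ additionally larger than a constant depending on $-u_0'(h_0)$ and $\|u_0\|_{C^2[0,h_0]}$ one ensures $w(0, \cdot) \ge u_0(\cdot)$ on $[h_0 - M^{-1}, h_0]$. Comparison then yields $u \le w$ in $\Omega_M$, and differentiating at $x = h(t)$ gives $-u_x(t, h(t)) \le -w_x(t, h(t)) = 2 M C_1$, whence $h'(t) \le 2 \mu M C_1 =: \mu C_2$. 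Crucially, $M$ depends only on $u_0$, $f$ and $C_1$, not on $T_0$.

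For the extension claim, the assumption $\inf_{t \in (0,T_0)}[h(t) - ct] > 0$ produces a strictly positive lower bound on the width of $G_{T_0}$. Together with the uniform $L^\infty$ bounds on $u$, $h$, $h'$, one flattens the moving domain to a fixed rectangle (by the standard change of variables used in \cite{DLI}) and applies parabolic Schauder estimates to obtain enough regularity of $u(T_0, \cdot)$ to conclude that, after translation, the function $x \mapsto u(T_0, x + cT_0)$ lies in $\mathscr{X}(h(T_0) - cT_0)$: the values and the boundary sign conditions $u_x(T_0, cT_0) > 0$, $u_x(T_0, h(T_0)) < 0$ again come from Hopf. Reapplying Proposition \ref{existence} with this function as new initial datum produces an extension to $[0, \overline T]$ with some $\overline T > T_0$.

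The main obstacle is the supersolution construction of part two. One has to choose $M$ large enough to beat both $f(w)$ and the shape of $u_0$ near $x = h_0$, and one must avoid a circular dependence on a not-yet-established bound for $h'$. The decisive observation is that the $h'$-dependent term $2MC_1 h'(t)\bigl[1 - M(h(t)-x)\bigr]$ is automatically non-negative on $\Omega_M$ once $h'(t) > 0$, so it does not need to be controlled from above for the supersolution inequality to hold — this is what makes the construction self-contained and allows $C_2$ to be independent of $T_0$.
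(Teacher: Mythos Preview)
Your approach is essentially the paper's, and the supersolution $w$ is literally the same quadratic as in the paper; the computation and the key observation that the $h'(t)$-term has the correct sign on the strip are both right.

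There is one genuine oversight in the supersolution step. You set $\Omega_M := \{0 < t < T_0,\ h(t) - M^{-1} < x < h(t)\}$ and compare $u$ with $w$ there, checking the left lateral boundary $x = h(t) - M^{-1}$. But $u$ is only defined for $ct < x < h(t)$, and in this paper the whole point is that $h(t) - ct$ can become arbitrarily small (this is exactly the vanishing regime), so the left edge of your strip may fall outside the domain of $u$. The paper fixes this by working on $\Delta_M := \{0 < t < T_0,\ \max\{ct,\, h(t) - 1/M\} < x < h(t)\}$ and checking both possible left boundaries: $w = C_1 \ge u$ on $x = h(t) - 1/M$ when $ct < h(t) - 1/M$, and $w > 0 = u$ on $x = ct$ when $h(t) - 1/M \le ct$. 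Once you make this correction, the comparison goes through unchanged and yields $h'(t) \le 2\mu M C_1$ exactly as you wrote.

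Your extension argument is a slight reorganization of the paper's: rather than passing to $u(T_0,\cdot)$ and restarting, the paper obtains a $C^2$ bound on $u(t,\cdot)$ uniform for $t \in [\delta, T_0)$, which gives a uniform local-existence window $\tau > 0$; extending from the initial time $T_0 - \tau$ then reaches $T_0 + \tau$. Both routes are valid, but the paper's avoids having to first define $u(T_0,\cdot)$ as a limit.
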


\begin{proof}
By the strong maximum principle we have
\begin{align}\label{smp}
\begin{split}
u(t,x)>0\ \ \mbox{for}\ \ 0<t<T_0,\ ct<x<h(t),\\
u_x(t,h(t))<0\ \ \mbox{for}\ \ 0<t\le T_0.
\end{split}
\end{align}
Let $C_1:=\max\{1, \|u_0\|_{C[0,h_0]}\}$. By the proof of Lemma \ref{ODE-estimate}, we can obtain
\begin{align*}
u(t,x)\leq \overline{u}(t)\le C_1\ \ \mbox{for}\ \ 0<t<T_0,\ ct<x<h(t).
\end{align*}
We will next prove $0<h'(t)\le \mu C_2$ for some $C_2>0$.
From \eqref{smp} we see $h'(t)=-\mu u_x(t,h(t))>0$ for $0<t<T_0$,
and it remains to prove $h'(t)\le \mu C_2$.
Define
\begin{align*}
w(t,x)=-C_1M^2(x-h(t))(x-h(t)+2/M)
\end{align*}
and we use a comparison principle over
\begin{align}\label{DeltaM}
\Delta_M=\{(t,x)\in \mathbb{R}^2;\ 0<t<T_0,\ \max\{ct, h(t)-1/M\}<x<h(t)\}.
\end{align}
Here we choose large $M$ satisfying
\begin{align}\label{Mdef}
M=\max\left\{\frac{\sqrt{2K}}{2},\ \frac{\|u_0'\|_{C([-h_0,h_0])}}{C_1}\right\}
\end{align}
with $K=\displaystyle \max_{0\le w\le C_1}|f'(w)|$.
Direct calculation gives
\begin{align*}
&w_t=2C_1M^2h'(t)(x-h(t)+1/M)\ge 0\quad \mbox{in}\ \ \Delta_M,\\
&w_x=-2C_1M^2(x-h(t)+1/M),\\
&w_{xx}=-2C_1M^2.
\end{align*}
Using \eqref{Mdef}, we have
\begin{align*}
w_t-w_{xx}-f(w)
&\ge 2C_1M^2-C_{1}K\\
&\ge C_1(2M^2-K)\\
&\ge 0\quad \mbox{in}\ \ \Delta_M.
\end{align*}
We next note that
\begin{align*}
&w(t,h(t))=u(t,h(t))=0, \\
&w(t,h(t)-1/M)=C_1\ge u(t,h(t)-1/M)\ \ {\rm when}\ \ ct<h(t)-1/M, \\
&w(t,ct)>0=u(t,ct)\ \ {\rm when}\ \ h(t)-1/M\le ct 
\end{align*}
for $0<t\le T$. Note that
\begin{align*}
u_0(x)&= \int_{h_0}^xu_0'(y)\ dy\le \|u_0'\|_{C([-h_0,h_0])}(h_0-x),\\
w(0,x)&=C_1M^2(h_0-x)(x-h_0+2/M)\ge C_1M(h_0-x)
\end{align*}
for $x\in[0,h_0]\cap [h_0-1/M, h_0]$. By \eqref{Mdef} we obtain
\begin{align*}
u_0(x)\le \|u_0'\|_{C([-h_0,h_0])}(h_0-x)\le C_1M(h_0-x)\le w(0,x)
\end{align*}
for $x\in[0,h_0]\cap[h_0-1/M,h_0]$.
Hence the standard comparison principle implies
\begin{align*}
u(t,x)\le w(t,x)\ \ \mbox{in}\ \ \Delta_M.
\end{align*}
Since $u(t,h(t))=w(t,h(t))=0$ for $0<t<T_0$, we have $u_x(t,h(t))\ge w_x(t,h(t))$ for $0<t<T_0$. Therefore
\begin{align*}
h'(t)=-\mu u_x(t,h(t))\le -\mu w_x(t,h(t))=\mu (2C_1M)=:\mu C_2
\end{align*}
for $0<t<T_0$.

Now we assume $\rho:=\inf_{t\in(0,T_0)}[h(t)-ct]>0$ and prove that the solution $(u, h)$  can be extended to some interval $[0,\overline{T}]$ with $\overline{T}>T_0$. From above estimates we have
\begin{align*}
h(t)\in [h_0, h_0+\mu C_2t], h'(t)\in (0, \mu C_2]\ \ {\rm for}\ t\in (0,T_0).
\end{align*}
We now fix $\delta\in(0, T_0)$ By standard $L^p$ estimates, the Sobolev embedding theorem, and the Schauder estimates for parabolic equations, we can find $C_3>0$ depending only on 
$\delta$, $T_0$, $C_1$, $C_2$ such that $\|u(t,\cdot)\|_{C^2[ct, h(t)]}\le C_3$ for $t\in[\delta, T_0)$. It then follows from the proof of Theorem \ref{existence} (cf \cite{DLI}) that 
there exists $\tau>0$ depending only on $C_1$, $C_2$ and $C_3$ and $\rho$ but not on $t$ such that the solution to problem (\ref{fbp2}) with initial time $t\in[\delta, T_0)$ 
can be extended uniquely to the time $t+2\tau$. In particular, if we start from 
 time $T_0-\tau$, then we can extend to the solution to time $T_0+\tau$.
\end{proof}

Now, for any $h_0>0$ and $u_0\in\mathscr{X}(h_0)$, we can define the maximal existence $T^*\in (0, \infty]$ of solution $(u,h)$ to \eqref{fbp} as in \eqref{Tmax}.

\subsection{Comparison principles}
In the proof the main theorems, we will frequently construct suitable upper and lower solutions. 
\begin{lemma}\label{comp}
Let $\xi$, $\overline{h}\in C^1([0,T])$ and $\overline{u}\in C(\overline{D}_T)\cap C^{1,2}(D_T)$ with 
$D_T=\{(t,x)\in\mathbb{R}^2 : 0<t\le T,\ \xi(t)<x<\overline{h}(t)\}$ for $T\in (0,\infty)$ satisfy
\begin{align*}
\left\{
\begin{array}{ll}
\overline{u}_t-\overline{u}_{xx}-f(\overline{u})\ge 0,& 0<t\le T,\ \xi(t)<x<\overline{h}(t), \\ 
\overline{u}(t,\overline{h}(t))=0,& 0<t\le T,\\ 
\overline{h}^{\prime}(t)\ge -\mu\overline{u}_x(t,\overline{h}(t)),& 0<t\le T.\\ \end{array}\right.
\end{align*}
For a solution $(u, h)$ to \eqref{fbp2}, if 
\begin{align*}
&ct\le \xi(t),\ u(t, \xi(t))\le \overline{u}(t, \xi(t))\ \ \mbox{for}\ \ 0<t\le T,\\
&h_0\le \overline{h}(0),\ u_0(x)\le\overline{u}(0,x)\ \ {\it for}\ \ \xi(0)\le x\le h_0,
\end{align*}
then
\begin{align*}
&h(t)\le \overline{h}(t)\ \ {\it for}\ 0<t\le T,\\
&u(t,x)\le\overline{u}(t,x)\ \ {\it for}\ 0<t\le T,\ \xi(t)<x<h(t).
\end{align*}
\end{lemma}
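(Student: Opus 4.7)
My plan is to run the classical Stefan-type comparison argument: first show that the free boundary $h$ of $u$ can never cross $\overline{h}$, and then derive the pointwise inequality $u\le\overline u$ from a linear parabolic maximum principle on the common domain. The main obstacle is the Hopf step at the contact point of the two free boundaries.

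First I would set
$$t_0 := \inf\{t \in (0, T] : h(t) > \overline{h}(t)\},$$
and argue by contradiction that this set is empty. Continuity and the hypothesis $h_0\le\overline h(0)$ ensure that, if the set is nonempty, $t_0>0$, $h(t_0)=\overline h(t_0)=:x_0$, and $h(t)\le\overline h(t)$ throughout $[0,t_0]$. Hence both $u$ and $\overline u$ are defined on $\Omega := \{(t,x) : 0<t\le t_0,\ \xi(t)<x<h(t)\}$.

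Next I would apply the weak maximum principle to $w:=\overline u-u$ on $\Omega$, noting that $w$ satisfies a linear parabolic inequality with bounded coefficient $c(t,x):=\int_0^1 f'(su+(1-s)\overline u)\,ds$ (bounded by Lemma \ref{hprime} and Lemma \ref{ODE-estimate}, or the fact that both solutions are bounded on $\Omega$). On the parabolic boundary of $\Omega$, $w\ge0$ follows from the three boundary hypotheses on $\overline u$, together with $\overline u(t,h(t))\ge0$; the last inequality comes from a preliminary application of the maximum principle to $\overline u$ itself on $D_T$, using $f(0)=0$ to write $f(\overline u)=g(\overline u)\overline u$ and the fact that $\overline u$ is nonnegative on its parabolic boundary (where it equals $0$ on $x=\overline h(t)$ and dominates the nonnegative $u$ on the other pieces). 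This yields $w\ge0$ on $\Omega$.

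At the contact point $(t_0,x_0)$ we have $w(t_0,x_0)=0$, so the Hopf boundary lemma gives $w_x(t_0,x_0)<0$; the degenerate case $w\equiv0$ on $\Omega$ is excluded using $u_0>0$ on $(0,h_0)$, since it would force $\overline u(0,\cdot)\equiv u_0$ on $[\xi(0),h_0]$ and then $\overline u(0,h_0)=0$ would be an interior zero of a nonnegative supersolution, contradicting $u_0'(h_0)<0$ via the strong maximum principle. Combining with the Stefan conditions at $(t_0,x_0)$ gives
$$\overline h'(t_0)-h'(t_0)\ \ge\ -\mu\,\overline u_x(t_0,x_0)+\mu\,u_x(t_0,x_0)\ =\ -\mu\,w_x(t_0,x_0)\ >\ 0.$$
On the other hand, the definition of $t_0$ makes $\overline h-h$ nonnegative on $[0,t_0]$ with a zero at $t_0$, so either the left derivative at $t_0$ is $\le0$, or there is a sequence $t_n\searrow t_0$ with $h(t_n)>\overline h(t_n)$, forcing the right derivative to be $\le0$; in either case $\overline h'(t_0)\le h'(t_0)$, a contradiction. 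Having ruled out $h>\overline h$, I would rerun the maximum-principle step with $t_0$ replaced by $T$ to obtain $u\le\overline u$ on the common domain, completing the proof. The Hopf application requires verifying that the $C^1$ boundary $x=h(t)$ satisfies the interior ball condition at $(t_0,x_0)$, which is the technical point to handle carefully.
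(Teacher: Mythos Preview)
The paper states this lemma without proof, deferring implicitly to the standard argument in \cite{DLI}. Your outline is exactly that argument, and the core mechanism --- maximum principle for $w=\overline u-u$ on the common domain, then the Hopf lemma at the first contact point of the free boundaries --- is the correct one.

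Two gaps need attention. First, the assertion that $t_0>0$ is unjustified when $h_0=\overline h(0)$: in that borderline case $t_0$ may be $0$, and neither the Hopf step nor your exclusion of $w\equiv0$ goes through (when $h_0=\overline h(0)$ the point $h_0$ is the right endpoint of the domain of $\overline u(0,\cdot)$, not an interior point, so your ``interior zero of a nonnegative supersolution'' argument does not apply). The remedy in \cite{DLI} is a perturbation: first prove the result under \emph{strict} inequalities $h_0<\overline h(0)$ and $u_0<\overline u(0,\cdot)$, where your argument is valid as written; then for the general case solve \eqref{fbp2} with $\mu$ replaced by $\mu-\varepsilon$, obtain a solution $(u^\varepsilon,h^\varepsilon)$ with $h^\varepsilon<h$ and strict inequalities for $t>0$ via the strong maximum principle and Hopf lemma, apply the strict case to $(u^\varepsilon,h^\varepsilon)$ against $(\overline u,\overline h)$, and let $\varepsilon\to0$ using continuous dependence on $\mu$.

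Second, your derivation of $\overline u\ge0$ on $D_T$ is incomplete: the base of $D_T$ is $\{0\}\times[\xi(0),\overline h(0)]$, but the hypothesis only gives $\overline u(0,x)\ge u_0(x)\ge0$ for $x\in[\xi(0),h_0]$ and says nothing for $x\in(h_0,\overline h(0)]$. Without control there one cannot conclude $\overline u\ge0$ on $D_T$, and hence cannot verify $w\ge0$ on the lateral piece $x=h(t)$. In the customary formulation (as in \cite{DLI,DL}) and in every application of the lemma in this paper, $\overline u\ge0$ is either assumed or evident from the explicit construction; you should add it as a hypothesis rather than try to derive it.
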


\noindent The function $\overline{u}$ or the pair $(\overline{u}, \overline{h})$ in Lemma \ref{comp} is usually called an 
upper solution of problem \eqref{fbp2}. We can define a lower solution by reversing all the inequalities in suitable places. There is a symmetry version of Lemma 
\ref{comp}, where the conditions on the left and right boundaries are interchanged. We also have corresponding comparison results for lower solutions in each case.

\subsection{Zero number arguments}
Our arguments in the present paper rely on the zero number argument that depends on the result of Angenent \cite{A}. For later use, we give a basic result of the zero number argument, which is a variant of Theorem C and D in \cite{A}. See also \cite{DLZ}.
\begin{lemma}\label{zeronumber0} 
$u:[0,T]\times[0, 1]\to\mathbb{R}$ be a bounded classical solution of 
\begin{align}\label{zeronumbereq}
u_{t}=a(t,x)u_{xx}+b(t,x)u_{x}+c(t,x)u
\end{align}
with boundary conditions
\begin{align*}
u(t,0)=l_{0}(t),\ u(t, 1)=l_{1}(t),
\end{align*}
where $l_{0}$, $l_{1}\in C^{1}[0,T]$, and $l_{0}$ and $l_{1}$ satisfies
\begin{align*}
l_{i}(t)\equiv 0 \ \ {\rm for}\ \ t\in[0,T]\ \ {\rm or}\ \ l_{i}(t)\ne 0\ \ {\rm for\ any}\ \ t\in[0,T]
\end{align*}
for each $i=0,1$. Assume that 
\begin{align*}
a, 1/a, a_t, a_x, a_{xx}, b, b_t, b_x, c\in L^{\infty},\ a>0\ \ {\it and}\ \ u(0,\cdot)\not\equiv 0\ \ {\it when}\ \ l_0=l_1=0.
\end{align*}Let $z(t)$ denote the number of zeros of $u(t,\cdot)$ in $[0,1]$. Then
\begin{enumerate}[{\rm (a)}]
\item for each $t\in(0,T]$, $z(t)$ is finite,
\item $z(t)$ is nonincreasing in $t$,
\item if for some $s\in (0, T)$ the function $u(s, \cdot)$ has a degenerate zero $x_{0}\in [0, 1]$, that is, 
\begin{align*}
u(s, x_{0})=u_{x}(s, x_{0})=0
\end{align*}
holds, then $z(t_{1})>z(t_{2})$ for all $t_{1}<s<t_{2}$.
\end{enumerate}
\end{lemma}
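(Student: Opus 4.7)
The plan is to deduce the lemma from Angenent's Sturmian theorems (Theorems C and D of \cite{A}) by using the dichotomy hypothesis on $l_0, l_1$ to reduce every possible boundary configuration to a setting those results cover directly. The key observation is that the assumption forces each lateral boundary $x=i$ to be either a permanent zero of $u$ (contributing a constant to $z(t)$) or to have $u$ uniformly bounded away from zero nearby (contributing nothing and keeping all zeros away from that boundary).

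For each $i\in\{0,1\}$ I would split into two cases. If $l_i\equiv 0$, then $x=i$ is a zero of $u(t,\cdot)$ for every $t\in[0,T]$. If instead $l_i(t)\neq 0$ throughout $[0,T]$, then by continuity and compactness $\eta_i:=\inf_{[0,T]}|l_i(t)|>0$; since $u$ is a bounded classical solution, it is uniformly continuous on the compact cylinder $[0,T]\times[0,1]$, so there is a $\delta_i>0$ such that $|u(t,x)|\ge\eta_i/2$ whenever $t\in[0,T]$ and $x$ lies within distance $\delta_i$ of the boundary point $i$. In this case no zero of $u(t,\cdot)$ ever reaches boundary $i$ on the given time interval.

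With this reduction, $z(t)$ decomposes as a fixed boundary contribution $S_0+S_1$, where $S_i:=1$ when $l_i\equiv 0$ and $S_i:=0$ otherwise, plus the number of interior zeros lying in an open sub-interval $J\subset(0,1)$ that is compactly contained in $[0,1]$ whenever some $S_i=0$. Applying Theorem C of \cite{A} on $J$ yields (a) finiteness of the interior zero count for $t>0$ and (b) its nonincrease in $t$; applying Theorem D yields (c) the strict drop at any degenerate interior zero $x_0$. The coefficient hypotheses on $a,b,c$ are precisely those required by Angenent, and the nontriviality condition $u(0,\cdot)\not\equiv 0$ in the fully-Dirichlet case supplies the one remaining input. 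Since $S_0+S_1$ is constant in $t$, adding it preserves (a)--(c).

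The main technical point is the mixed situation in which exactly one $l_i$ vanishes identically: the restricted function on $J$ then fails to satisfy homogeneous Dirichlet data at the non-trivial end. This is not a genuine obstacle because Angenent's proof of (c) at a degenerate interior zero is local (a small space-time neighborhood of $(s,x_0)$, with the zero set captured by the structure of the linearized equation), and his finiteness and monotonicity assertions likewise localize to compactly contained sub-cylinders once the zeros are known to be uniformly separated from the non-Dirichlet boundary. Verifying this localization and checking that the coefficient bounds for $a,b,c$ transfer to the sub-cylinder is the only bookkeeping that remains; once done the statement follows immediately.
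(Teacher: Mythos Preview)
The paper supplies no proof of this lemma: it is stated as ``a variant of Theorem C and D in \cite{A}'' with a pointer to \cite{DLZ}, and is thereafter used as a black box. Your sketch---handling the boundary dichotomy case by case, peeling off a constant endpoint contribution $S_i$ when $l_i\equiv 0$, and invoking Angenent's theorems on the remaining interior---is exactly how such variants are derived in the cited literature, so your approach is aligned with (and more explicit than) what the paper does.

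One small gap is worth flagging. Part~(c) allows the degenerate zero $x_0$ to sit on a Dirichlet endpoint: when $l_i\equiv 0$ one may have $u(s,i)=u_x(s,i)=0$. You invoke Theorem~D only for \emph{interior} degenerate zeros, and since in your decomposition the endpoint contribution $S_i$ is a constant, the strict drop in $z(t)$ must come from the interior count. This is in fact true---Angenent's analysis in the Dirichlet setting shows that at such a moment an interior zero collides with the boundary and is absorbed---but your write-up should say so rather than restrict attention to interior $x_0$.
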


\begin{lemma}\label{zeronumber}
Let $\xi_1(t)$ and $\xi_2(t)$ be continuous functions of $t\in (t_0, t_1)$ and assume that $\xi_1(t)<\xi_2(t)$ for $t\in(t_0, t_1)$. 
Suppose $u(t,x)$ is a continuous function of $t\in(t_0, t_1)$ and 
$x\in[\xi_1(t), \xi_2(t)]$, and satisfies \eqref{zeronumbereq} in the classical sense for $t\in(t_0, t_1)$ and $x\in (\xi_1(t), \xi_2(t))$ with
\begin{align*}
u(t, \xi_1(t))\ne 0,\ u(t, \xi_2(t))\ne 0\ \ {\it for}\ t\in(t_0, t_1).
\end{align*}
Let $Z(t)$ denote the number of zeros of $u(t, \cdot)$ in $[\xi_1(t), \xi_2(t)]$. Then
\begin{enumerate}[{\rm (a)}]
\item for each $t\in (t_0, t_1)$, $Z(t)$ is finite,
\item $Z(t)$ is nonincreasing in $t$,
\item if for some $s\in(t_0, t_1)$ the function $u(s, \cdot)$ has a degenerate zero $x_0\in(\xi_1(s), \xi_2(s))$, that is, 
\begin{align*}
u(s,x_0)=u_x(s, x_0)=0,
\end{align*}
holds, then $Z(s_1)>Z(s_2)$ for all 
$s_1$, $s_2$ satisfying $t_0<s_1<s<s_2<t_1$.
\end{enumerate}
\end{lemma}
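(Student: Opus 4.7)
The plan is to reduce Lemma \ref{zeronumber} to the fixed-domain version Lemma \ref{zeronumber0} via a change of variables that straightens the two lateral boundaries $x=\xi_1(t)$ and $x=\xi_2(t)$ into the fixed segment $\{0,1\}$. Concretely, on any compact sub-interval $[s_0,s_1]\subset (t_0,t_1)$ (where $\xi_2(t)-\xi_1(t)$ is bounded below away from zero by continuity and positivity), I introduce
\[
y=\frac{x-\xi_1(t)}{\xi_2(t)-\xi_1(t)},\qquad v(t,y):=u\bigl(t,\,\xi_1(t)+y(\xi_2(t)-\xi_1(t))\bigr),
\]
so $v$ is defined on $[s_0,s_1]\times[0,1]$, and $v(t,0)=u(t,\xi_1(t))$, $v(t,1)=u(t,\xi_2(t))$ are nonzero everywhere on $[s_0,s_1]$ by assumption.

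Next I would compute the equation satisfied by $v$. Writing $\ell(t):=\xi_2(t)-\xi_1(t)$ and using the chain rule, $v_t=u_t+u_x(\xi_1'(t)+y\ell'(t))$, $v_y=\ell(t)u_x$, $v_{yy}=\ell(t)^2 u_{xx}$. Substituting into \eqref{zeronumbereq} gives a linear parabolic equation
\[
v_t=\tilde{a}(t,y)v_{yy}+\tilde{b}(t,y)v_y+\tilde{c}(t,y)v,
\]
with $\tilde{a}=a/\ell(t)^2$, $\tilde{b}=b/\ell(t)+(\xi_1'(t)+y\ell'(t))/\ell(t)$, $\tilde{c}=c$. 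The sign condition $\tilde{a}>0$ and the boundedness / $L^\infty$ regularity of $\tilde{a},1/\tilde{a},\tilde{a}_t,\tilde{a}_x,\tilde{a}_{xx},\tilde{b},\tilde{b}_t,\tilde{b}_x,\tilde{c}$ that Lemma \ref{zeronumber0} demands on $[s_0,s_1]\times[0,1]$ then follow from the corresponding assumptions on $a,b,c$ together with sufficient regularity of $\xi_1,\xi_2$. Since the change of variables is a diffeomorphism that sends zeros of $u(t,\cdot)$ in $[\xi_1(t),\xi_2(t)]$ bijectively to zeros of $v(t,\cdot)$ in $[0,1]$, and preserves the degeneracy condition $u=u_x=0\;\Leftrightarrow\;v=v_y=0$, properties (a), (b), (c) of $Z(t)$ on $[s_0,s_1]$ are inherited directly from the corresponding properties given by Lemma \ref{zeronumber0} applied to $v$. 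Letting $[s_0,s_1]$ exhaust $(t_0,t_1)$ yields the full conclusion.

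The main technical obstacle is the regularity of $\xi_1,\xi_2$: the statement only assumes them continuous, but to ensure that $\tilde{b}$ has the bounded $t$-derivative required by Lemma \ref{zeronumber0} one needs at least $\xi_i\in C^1$ with some smoothness on $\xi_i'$. In every intended application in this paper the curves $\xi_i$ are $C^1$ (either $ct$, $(c+\varepsilon)t$, or the free boundary $h(t)\in C^1$ by Proposition \ref{existence}), so no loss of generality occurs; alternatively one first approximates $\xi_1,\xi_2$ uniformly by smooth curves $\xi_1^n,\xi_2^n$, applies the result, and passes to the limit using the lower semicontinuity of the zero count at regular zeros together with the strict drop at degenerate zeros to carry (a)--(c) through. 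I would record this regularity hypothesis explicitly and then invoke Lemma \ref{zeronumber0} on the transformed problem to conclude.
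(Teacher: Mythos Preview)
Your approach is correct and is precisely the standard argument: the paper itself does not give a proof but simply cites \cite{CLZ} and \cite{DLZ}, and the reduction in those references is exactly the straightening change of variables $y=(x-\xi_1(t))/(\xi_2(t)-\xi_1(t))$ you describe, followed by an application of the fixed-domain result (Lemma \ref{zeronumber0}). Your observation about the regularity of $\xi_1,\xi_2$ is also on point---the cited references in fact assume $\xi_i\in C^1$, which, as you note, is satisfied in every application in this paper.
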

You can find the proof of Lemma \ref{zeronumber} in \cite{CLZ} and \cite{DLZ}.

\subsection{Traveling waves and an auxiliary problem}

First we consider the following problem
\begin{align}\label{semi-wave-problem}
\left\{
\begin{array}{l}
q''+cq'+q(1-q)=0\ {\rm in}\ (-\infty, 0), \\
q(0)=0,\ q(-\infty)=1,\ q(z)>0\ {\rm in}\ (-\infty, 0).
\end{array}\right.
\end{align}

\begin{proposition}[Proposition 1.8 and Theorem 6.2 of \cite{DL}]\label{semi-wave} For any $\mu>0$ there exists a unique $c^{*}=c^{*}_{\mu}>0$ and a solution $q^{*}$ to 
\eqref{semi-wave-problem} with $c=c^{*}$ such that $(q^{*})'(0)=-c^{*}/\mu$.
\end{proposition}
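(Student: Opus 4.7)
The approach I would take is a phase-plane/shooting argument on the scalar ODE. Rewriting (\ref{semi-wave-problem}) as the planar system $q'=p$, $p'=-cp-q(1-q)$, the critical point $(1,0)$ is a saddle for every $c\ge 0$ (linearization has eigenvalues $(-c\pm\sqrt{c^2+4})/2$), while the origin is a stable focus for $c\in(0,2)$ and a stable node for $c\ge 2$. Let $\Gamma_c$ denote the branch of the unstable manifold of $(1,0)$ entering $\{p<0\}$. A trapping-region argument -- the flow on the open segment $\{0<q<1,\,p=0\}$ points strictly downward since $p'=-q(1-q)<0$, and the energy $E(q,p):=p^2/2+q^2/2-q^3/3$ is nonincreasing along orbits because $\frac{d}{dz}E=-cp^2\le 0$ with $E(1,0)=1/6$ -- confines $\Gamma_c$ to $\{0<q<1,\,p<0\}$ with $|p|\le 1/\sqrt{3}$. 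For $c\in(0,2)$ the spiral nature of the origin rules out monotone convergence of $\Gamma_c$ to $(0,0)$, so $q$ must decrease from $1$ to $0$ in finite ``time''; translating this crossing to $z=0$ produces a unique solution $q_c$ of the semi-wave problem. For $c\ge 2$, $\Gamma_c$ is the classical KPP heteroclinic from $(1,0)$ to $(0,0)$ and never meets $\{q=0\}$, so no semi-wave exists.

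Define $\gamma(c):=-q_c'(0)>0$ on $(0,2)$. I would then establish: (i) $\gamma$ is continuous by continuous dependence of $\Gamma_c$ on $c$; (ii) $\gamma(0^+)=1/\sqrt{3}$, since at $c=0$ the energy $E$ is conserved and the level $E=1/6$ forces $p^2=1/3$ at $q=0$; (iii) $\gamma(c)\to 0$ as $c\nearrow 2$ because $\Gamma_c$ degenerates onto the critical KPP wave which approaches $(0,0)$. Consequently the map $c\mapsto \gamma(c)/c$ runs continuously from $+\infty$ down to $0$ on $(0,2)$, so by the intermediate value theorem there is at least one $c^*\in(0,2)$ solving $\gamma(c^*)=c^*/\mu$.

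For uniqueness my plan is to prove $\gamma$ itself is strictly decreasing by reparameterizing each orbit by $q$. Since $q$ is monotone along $\Gamma_c$, the quantity $P_c(q):=-q_c'$ is well-defined for $q\in[0,1]$, satisfies $P_c(1)=0$ and $P_c(0)=\gamma(c)$, and obeys the first-order ODE
\begin{equation*}
P_c'(q)=c-\frac{q(1-q)}{P_c(q)}.
\end{equation*}
For $0<c_1<c_2$ the difference $W(q):=P_{c_2}(q)-P_{c_1}(q)$ satisfies
\begin{equation*}
W'(q)=(c_2-c_1)+\frac{q(1-q)}{P_{c_1}(q)P_{c_2}(q)}\,W(q),
\end{equation*}
so $W'(q)>0$ at any point where $W(q)\ge 0$. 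If $W(0)\ge 0$ were to hold, this differential inequality would force $W>0$ throughout $(0,1]$, contradicting $W(1)=0$; hence $\gamma(c_2)<\gamma(c_1)$. Strict monotonicity of $c\mapsto\gamma(c)/c$ then yields uniqueness of $c^*$, and taking $q^*:=q_{c^*}$ completes the proof.

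The main obstacle I anticipate is precisely this monotonicity step. The existence pieces are fairly routine phase-plane bookkeeping once the picture is drawn, but converting continuity of $\gamma$ into uniqueness of the threshold $c^*$ really does require either the $P$-substitution argument above or an equivalent careful pointwise orbit comparison; getting the sign of the linearized equation for $W$ correct, and ruling out tangential contact between the two profiles, is where the delicate work lies.
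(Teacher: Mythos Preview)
The paper does not give its own proof; the proposition is quoted directly from Du--Lou \cite{DL} (their Proposition~1.8 and Theorem~6.2). Your phase-plane/shooting outline is essentially the approach of \cite{DL}: for each $c\in(0,2)$ the unstable manifold of $(1,0)$ produces a unique semi-wave $q_c$, and the map $\gamma(c)=-q_c'(0)$ is shown to be continuous and strictly decreasing, with $\gamma(0^+)>0$ and $\gamma(2^-)=0$, so that $\gamma(c)=c/\mu$ has exactly one root. Your $P$-substitution for strict monotonicity is the standard comparison device and your argument for it is correct: if $W(0)\ge 0$ then $W'\ge c_2-c_1>0$ wherever $W\ge 0$, forcing $W(q)\ge(c_2-c_1)q$ on $(0,1)$ and hence $W(1^-)>0$, contradicting $W(1)=0$.

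The one step you leave informal is (iii), the limit $\gamma(c)\to 0$ as $c\nearrow 2$. ``Degeneration onto the critical KPP wave'' is the right picture but is not itself a proof. A clean way to close this uses your own energy functional: since $E$ is nonincreasing along orbits and $q$ decreases in $z$, for every $\epsilon\in(0,1)$ one has
\[
\gamma(c)^2 \;=\; 2E(0,-\gamma(c)) \;\le\; 2E\bigl(\epsilon,-P_c(\epsilon)\bigr) \;\le\; P_c(\epsilon)^2+\epsilon^2.
\]
By continuous dependence of the unstable manifold on $c$, $P_c(\epsilon)\to P_2(\epsilon)$ as $c\nearrow 2$, and $P_2(\epsilon)\to 0$ as $\epsilon\to 0$ because the $c=2$ orbit terminates at the origin. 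Sending first $c\nearrow 2$ and then $\epsilon\to 0$ gives $\gamma(2^-)=0$. With this filled in, your plan is complete and matches the argument the paper cites.
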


We remark that this function $q^{*}$ is shown in \cite{DL} to satisfy $(q^{*})'(z)<0$ for $z\le 0$.

We call $q^{*}$ a {\it semi-wave\ with\ speed} $c^{*}$, since the function $w(t,x):=q^{*}(x-c^{*}t)$ satisfies
\begin{align*}
\left\{
\begin{array}{l}
w_{t}=w_{xx}+w(1-w)\ \ {\rm for}\ \ t\in\mathbb{R}^{1},\ x<c^{*}t, \\
w(t, c^{*}t)=0,\ w_{x}(t, c^{*}t)=-c^{*}/\mu,\ w(t,-\infty)=1,\ \ t\in\mathbb{R}^{1}.
\end{array}
\right.
\end{align*}
\begin{remark} We remark that the number $c^*=c^*(\mu)$ satisfies $c^*\in (0, c_0)$ and $\lim_{\mu\to\infty}c^*(\mu)=c_0$, where $c_0$ is the minimal speed of traveling wave (see \cite{DL}). 
In case the nonlinearity is $u(1-u)$, $c_0=2$ holds. This proposition holds for monostable, bistable and combustion type nonlinearities. For these types of nonlinearities, the number 
$c^*$ satisfies $c^*\in(0,c_0)$ and $\lim_{\mu\to\infty}c^*(\mu)=c_0$, where $c_0$ express the minimal speed of traveling wave when the nonlinearity is of monostable, the unique speed of traveling wave when
 the nonlinearity is of bistable or combustion type(see \cite{AW1} and \cite{AW2}). 
\end{remark}

Next we consider the following problem :
\begin{align}\label{compact-support-wave}
\left\{
\begin{array}{l}
V''+cV'+V(1-V)=0,\ \ 0<z<L. \\
V(0)=0,\ V(L)=0,\ V(z)>0\ {\rm in}\ (0,L).
\end{array}
\right.
\end{align}

By virtue of a phase-plane analysis in case (iv) of Section 3.2 in \cite{GLZ} (see also \cite{DWZ}), we have the following proposition.

\begin{proposition}[\cite{GLZ}]\label{compact-support-wave-ex}
For any $c\in(0,c^{*})$, there exist a unique positive number $L_{c}$ and a  unique solution $V_{c}$ to \eqref{compact-support-wave} with $L$ replaced $L_{c}$ such that $V'_{c}(L_{c})=-c/\mu$.
\end{proposition}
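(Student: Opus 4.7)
I would approach this proposition via a shooting / phase-plane argument. Writing $W=V'$, the ODE in \eqref{compact-support-wave} becomes the planar autonomous system $V'=W,\ W'=-cW-V(1-V)$, with equilibria $(0,0)$ (a stable focus, since $c<c^{*}<c_{0}=2$) and $(1,0)$ (a saddle with eigenvalues $\lambda_{\pm}=(-c\pm\sqrt{c^{2}+4})/2$). For each $\alpha>0$ let $V_{\alpha}$ be the unique solution of the IVP $V_{\alpha}(0)=0,\ V_{\alpha}'(0)=\alpha$, and denote its orbit by $\gamma_{\alpha}$ in the $(V,W)$-plane. Since $W'(0)=-c\alpha<0$, each $\gamma_{\alpha}$ initially enters the open quadrant $\{V>0,W>0\}$ with $W$ strictly decreasing.

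The next step is to classify the subsequent behaviour of $\gamma_{\alpha}$ into three mutually exclusive cases: (i) $W$ first vanishes at some $(V_{\max}(\alpha),0)$ with $V_{\max}(\alpha)<1$, at which point $V''<0$, so $\gamma_{\alpha}$ bends into $\{V>0,\,W<0\}$ and eventually crosses the $W$-axis transversally at some first $L(\alpha)\in(0,\infty)$ with $V'_{\alpha}(L(\alpha))<0$; (ii) $\gamma_{\alpha}$ is asymptotic to the saddle $(1,0)$ along its stable manifold; (iii) $\gamma_{\alpha}$ crosses the line $V=1$ with $W>0$, after which a short computation with $W'=-cW+V(V-1)$ shows $V\to+\infty$. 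Since the stable manifold of $(1,0)$ is one-dimensional, continuous dependence on initial data forces a unique threshold $\alpha^{*}>0$ so that case (i) holds for $\alpha\in(0,\alpha^{*})$, case (ii) at $\alpha=\alpha^{*}$, and case (iii) for $\alpha>\alpha^{*}$.

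Define the shooting function $\beta:(0,\alpha^{*})\to(-\infty,0)$ by $\beta(\alpha):=V'_{\alpha}(L(\alpha))$. Continuity of $\beta$ follows from smooth dependence of ODE solutions together with the transversality of the crossing of the $W$-axis, and strict monotonicity ($\alpha_{1}<\alpha_{2}\Rightarrow \beta(\alpha_{2})<\beta(\alpha_{1})$) follows from non-intersection of orbits: $\gamma_{\alpha_{1}}$ closed off by the segment of the $W$-axis from $(0,\beta(\alpha_{1}))$ to $(0,\alpha_{1})$ bounds a Jordan domain that $\gamma_{\alpha_{2}}$ must remain outside, forcing it to return to the $W$-axis strictly below $\beta(\alpha_{1})$. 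For the endpoint limits, linearising at $(0,0)$ (where the solution behaves like $(\alpha/\omega)e^{-cz/2}\sin(\omega z)$ with $\omega=\sqrt{4-c^{2}}/2$) yields $\beta(\alpha)/\alpha\to -e^{-c\pi/\sqrt{4-c^{2}}}$ as $\alpha\to 0^{+}$, so $\beta(\alpha)\to 0^{-}$. As $\alpha\nearrow\alpha^{*}$, $\gamma_{\alpha}$ lingers arbitrarily long near the saddle and then departs along the $V<1$ branch of its unstable manifold; hence $\beta(\alpha)\to\gamma(c)$, where $\gamma(c)<0$ is the $W$-coordinate at which that unstable-manifold branch first meets $V=0$.

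To finish, observe that Proposition \ref{semi-wave} characterises $c^{*}$ as precisely the value of $c$ with $\gamma(c)=-c/\mu$; combined with a monotonicity-in-$c$ argument (easily checked at $c=0$ by energy conservation, which gives $\gamma(0)=-1/\sqrt{3}$) this yields $\gamma(c)<-c/\mu$ for all $c\in(0,c^{*})$. The strictly decreasing continuous map $\beta$ therefore has range containing $-c/\mu$, so the intermediate value theorem produces a unique $\alpha_{c}\in(0,\alpha^{*})$ with $\beta(\alpha_{c})=-c/\mu$; setting $V_{c}:=V_{\alpha_{c}}$ and $L_{c}:=L(\alpha_{c})$ gives existence and uniqueness. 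The main technical obstacle I foresee is the justification of the limit $\beta(\alpha)\to\gamma(c)$ as $\alpha\nearrow\alpha^{*}$: it requires a careful tubular-neighbourhood (``exchange-lemma''-style) analysis near the saddle $(1,0)$ that tracks how orbits starting just below the stable manifold pass near $(1,0)$, linger, and then exit along the unstable manifold with $W$-coordinate arbitrarily close to $\gamma(c)$.
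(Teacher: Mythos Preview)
Your phase-plane/shooting argument is correct and is precisely the approach the paper invokes: the paper does not prove this proposition itself but cites the phase-plane analysis in \cite[Section~3.2, case~(iv)]{GLZ} (see also \cite{DWZ}), which proceeds along the same lines you outline. Two minor remarks: (1) your ``monotonicity-in-$c$'' justification of $\gamma(c)<-c/\mu$ is more than you need---since Proposition~\ref{semi-wave} asserts that $c^{*}$ is the \emph{unique} $c$ with $\gamma(c)=-c/\mu$, the continuous function $c\mapsto\gamma(c)+c/\mu$ does not vanish on $(0,c^{*})$, and its value $-1/\sqrt{3}$ at $c=0$ fixes the sign; (2) the limit $\beta(\alpha)\to\gamma(c)$ as $\alpha\nearrow\alpha^{*}$ is indeed the delicate step, but it is a standard consequence of the $\lambda$-lemma (inclination lemma) near the hyperbolic saddle $(1,0)$ rather than a genuine obstacle.
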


If we define the function $w(t,x):=V_{c}(x-ct)$, $w$ satisfies
\begin{align*}
\left\{
\begin{array}{l}
w_{t}=w_{xx}+w(1-w)\ \ {\rm for}\ \ t\in\mathbb{R}^{1},\ ct<x<ct+L_{c}, \\
w(t, ct)=w(t, ct+L_{c})=0,\ \ t\in\mathbb{R}^1, \\
-\mu w_{x}(t, ct+L_{c})=c,\ t\in\mathbb{R}^{1}.
\end{array}
\right.
\end{align*}
and $w$ resemble a traveling wave with a compact support moving to the right at constant speed $c$. 

We next state the following lemma on an auxiliary elliptic problem for later use.
\begin{lemma}[Lemma 2.4 of \cite{DWZ}]\label{auxelliptic}
Suppose that $C\in [0, c_0)$. Then for all large $l>0$, the problem
\begin{align*}
\left\{
\begin{array}{ll}
w''+Cw'+w(1-w)=0, & {\it for}\ \ -l<x<l,\\ 
w(-l)=w(l)=0, & 
\end{array}\right.
\end{align*}
admits a unique positive solution $w_l(x)$. Moreover, $\lim_{l\to\infty}w_l(x)=1$ uniformly in any compact subset of $\mathbb{R}^1$.
\end{lemma}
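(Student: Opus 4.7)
The plan is to establish \textbf{existence} by monotone iteration from ordered sub- and super-solutions, \textbf{uniqueness} by a Brezis--Oswald type energy argument exploiting the strict monotonicity of $f(u)/u = 1-u$, and the \textbf{locally uniform limit} $w_l \to 1$ by a monotonicity-in-$l$ argument combined with the classical Fisher--KPP spreading property in a moving frame.

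For existence, $\bar w \equiv 1$ is a super-solution since $f(1)=0$. To construct an ordered positive sub-solution, I would compute the principal eigenvalue of the linear operator $L\varphi := -\varphi'' - C\varphi'$ on $(-l,l)$ with Dirichlet data: the substitution $\varphi = e^{-Cx/2}\psi$ conjugates $L$ to $-\psi'' + (C^2/4)\psi$, so
\[
\lambda_1(l) = \frac{C^2}{4} + \left(\frac{\pi}{2l}\right)^2,
\]
which satisfies $\lambda_1(l) < 1$ for all $l$ sufficiently large because $C < c_0 = 2$. Letting $\varphi_l > 0$ be an associated eigenfunction normalized so that $\|\varphi_l\|_\infty = 1$, for any $\varepsilon \in (0, 1 - \lambda_1(l))$ one has
\[
(\varepsilon \varphi_l)'' + C(\varepsilon \varphi_l)' + \varepsilon \varphi_l(1 - \varepsilon \varphi_l) = \varepsilon \varphi_l\bigl(1 - \lambda_1(l) - \varepsilon \varphi_l\bigr) \geq 0,
\]
so $\underline w := \varepsilon \varphi_l \leq 1 = \bar w$ is an ordered sub-solution, and the standard monotone iteration yields a classical positive solution $w_l \in [\underline w, 1]$.

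For uniqueness, rewrite the equation in the self-adjoint form $(e^{Cx}w')' + e^{Cx}w(1-w)=0$. Given two positive solutions $w_1, w_2$, multiply the equation for $w_i$ by $w_j^2/w_i$, integrate by parts with the weight $e^{Cx}$ (boundary contributions vanish since each $w_k$ vanishes linearly at $\pm l$), symmetrize in $(i,j)$, and add, to obtain
\[
\int_{-l}^{l} e^{Cx} \left[\Bigl(w_2' - \tfrac{w_2}{w_1} w_1'\Bigr)^2 + \Bigl(w_1' - \tfrac{w_1}{w_2} w_2'\Bigr)^2 \right] dx + \int_{-l}^{l} e^{Cx} (w_1 - w_2)^2 (w_1 + w_2)\, dx = 0.
\]
Both integrands are pointwise non-negative; the second, which encodes the strict monotonicity of $f(u)/u$, forces $w_1 \equiv w_2$ on $(-l,l)$.

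For the limit, first observe that $w_l$ is nondecreasing in $l$: for $l_1 < l_2$, the zero extension of $w_{l_1}$ to $(-l_2,l_2)$ is a continuous, piecewise-$C^2$ weak sub-solution (Hopf's lemma guarantees the derivative jump at $\pm l_1$ has the correct sign), so the weak comparison principle gives $w_{l_1} \leq w_{l_2}$. Monotonicity together with the a priori bound $w_l \leq 1$ and interior Schauder estimates yield $w_l \to w_\infty$ in $C^2_{\mathrm{loc}}(\mathbb{R})$, where $w_\infty$ solves the equation on $\mathbb{R}$ and satisfies $w_\infty \geq w_{l_0} > 0$ on each compact $(-l_0, l_0)$. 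To finish, I would consider the parabolic problem $u_t = u_{xx} + Cu_x + u(1-u)$ on $\mathbb{R}$ with compactly supported continuous initial datum $u_0 \not\equiv 0$ satisfying $u_0 \leq w_\infty$; the moving-frame change $v(t,y) := u(t,y - Ct)$ converts it to the standard Fisher--KPP equation $v_t = v_{yy} + v(1-v)$, so the classical KPP hair-trigger/spreading theorem gives $u(t,x) \to 1$ locally uniformly in $x$. Parabolic comparison with the stationary solution $w_\infty$ forces $u(t,\cdot) \leq w_\infty$ for all $t$, so passing $t \to \infty$ yields $w_\infty \geq 1$, hence $w_\infty \equiv 1$. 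The main obstacle I anticipate is precisely this final Liouville-type step; the moving-frame reduction sidesteps a direct phase-plane analysis by leveraging Fisher--KPP spreading, which the paper already invokes elsewhere.
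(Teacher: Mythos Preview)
The paper does not supply its own proof of this lemma: it is simply quoted as Lemma~2.4 of \cite{DWZ}, so there is nothing in the present paper to compare your argument against.

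That said, your proposed proof is correct and self-contained. The eigenvalue computation $\lambda_1(l)=C^2/4+(\pi/2l)^2$ uses precisely the fact that for the logistic nonlinearity $c_0=2$, so that $C<c_0$ is equivalent to $C^2/4<1$; this is exactly the threshold needed for a small positive sub-solution to exist for large $l$. The Brezis--Oswald/Picone identity with weight $e^{Cx}$ handles uniqueness cleanly, and your remark that the boundary terms vanish because each $w_i$ vanishes linearly (Hopf) is the right justification. For the monotonicity step, note that the zero extension is only a \emph{weak} sub-solution; the cleanest way to convert this into $w_{l_1}\le w_{l_2}$ is to start the monotone iteration (or the parabolic flow) on $(-l_2,l_2)$ from this sub-solution and invoke the uniqueness you already proved. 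Finally, the Liouville step via the moving-frame reduction to the standard Fisher--KPP problem and Aronson--Weinberger spreading is valid: for fixed $x$ one has $u(t,x)=v(t,x+Ct)$ with $|x+Ct|\le c't$ for any $c'\in(C,2)$ and large $t$, so the spreading estimate applies and forces $w_\infty\equiv 1$. This parabolic route is arguably more in the spirit of the paper (which repeatedly uses parabolic comparison and the compact-support waves $V_{\tilde c}$) than a direct phase-plane classification of bounded entire solutions of $w''+Cw'+w(1-w)=0$.
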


\subsection{An upper estimate of $h(t)$}
At the end of this section, we obtain an upper estimate of $h(t)$ for the global solution $(u,h)$. By constructing the upper solution of the form 
\begin{align*}
&\overline{h}(t):=c^*t+M(e^{-\delta T}-e^{-\delta t})+H,
&\overline{u}(t,x):=(1+Me^{-\delta t})q^*(x-\overline{h}(t))
\end{align*}
with suitable $M$, $\delta$, $H$ and $T>0$ as in \cite[Lemma 3.2]{DMZ}, we can obtain the following proposition.
\begin{proposition}\label{upperestspeed}
Assume that $f(u)=u(1-u)$ and let $(u,h)$ be a global solution to \eqref{fbp2}. Then there exists $C_{0}>0$ such that $h(t)-c^{*}t<C_{0}$ for $t>0$.
\end{proposition}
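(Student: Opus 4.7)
The plan is to construct an explicit super-solution $(\overline u,\overline h)$ of \eqref{fbp2} on $[T,\infty)$ for some large $T$ and then apply Lemma \ref{comp} to conclude $h(t)\le\overline h(t)$, where $\overline h(t)-c^*t$ is uniformly bounded. On the compact interval $[0,T]$ the bound $h(t)-c^*t\le h(T)$ is immediate from Lemma \ref{hprime}, so the essential task is the tail.

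Adapting the suggested ansatz (decoupling the multiplier on $\overline h$ from the one on $\overline u$, which I think is needed for the Stefan inequality to close), I would set
\[
\overline h(t):=c^*t+M_1(e^{-\delta T}-e^{-\delta t})+H,\qquad \overline u(t,x):=(1+M_2e^{-\delta t})\,q^*\bigl(x-\overline h(t)\bigr),
\]
for $t\ge T$ and $x\in[ct,\overline h(t)]$, with $q^*$ the semi-wave from Proposition \ref{semi-wave}. Three conditions must be checked. First, $\overline u(t,\overline h(t))=0$ is automatic from $q^*(0)=0$. Second, using $\overline h'=c^*+M_1\delta e^{-\delta t}$ together with $(q^*)'(0)=-c^*/\mu$, the Stefan-type inequality $\overline h'(t)\ge-\mu\overline u_x(t,\overline h(t))$ reduces to $M_1\delta\ge c^*M_2$, which I enforce by setting $M_1:=c^*M_2/\delta$. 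Third, the parabolic inequality $\overline u_t-\overline u_{xx}-\overline u(1-\overline u)\ge 0$, after substituting $(q^*)''+c^*(q^*)'+q^*(1-q^*)=0$ and using the relation $M_1\delta=c^*M_2$, collapses to the pointwise estimate $c^*|(q^*)'|+(q^*)^2\ge\delta q^*$ on $(-\infty,0]$. This is trivial where $q^*\ge\delta$, and in the region $q^*<\delta$, for $\delta$ small, we are close to $z=0$ where $|(q^*)'|\approx c^*/\mu$ is bounded below away from $0$, so the estimate holds; this fixes $\delta$ small once and for all.

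The initial comparison at $t=T$ requires $u(T,x)\le(1+M_2e^{-\delta T})q^*(x-c^*T-H)$ on $[cT,h(T)]$ together with $h(T)\le c^*T+H$. Lemma \ref{ODE-estimate} provides $u(T,\cdot)\le 1+M_0e^{-\delta T}$ with $M_0=M_0(u_0)$; choosing $M_2>M_0$ opens a multiplicative gap $(1+M_0e^{-\delta T})/(1+M_2e^{-\delta T})<1$, and then taking $H$ large enough (relative to $h(T)-c^*T$ and this gap) pushes the argument of $q^*$ into the region where $q^*$ exceeds that ratio, using $q^*(-\infty)=1$ and the monotonicity of $q^*$. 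Applying Lemma \ref{comp} yields $h(t)\le c^*t+M_1e^{-\delta T}+H$ for $t\ge T$, which delivers the desired $C_0$. The principal difficulty is the interdependent calibration of parameters; the ordering I would follow is $\delta$ first (for the parabolic inequality), then $M_2>M_0$, then $M_1:=c^*M_2/\delta$, and finally $T$ and $H$ large enough for the initial comparison to hold.
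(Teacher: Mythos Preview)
Your proposal is correct and follows essentially the same super-solution construction the paper sketches (and attributes to \cite[Lemma~3.2]{DMZ}). Your decoupling of the amplitude constants into $M_1,M_2$ with $M_1\delta=c^*M_2$ is a sensible refinement of the paper's literal ansatz, since taking $M_1=M_2$ forces $\delta\ge c^*$ from the Stefan inequality, which can conflict with the constraint $\delta<-f'(1)=1$ needed to invoke Lemma~\ref{ODE-estimate} when $c^*\ge 1$.
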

\begin{remark}
This proposition holds for monostable, bistable and combustion type nonlinearities.
\end{remark}

\section{The case of $T^*<\infty$}

In this section, we give some properties of the solutions which exhibit vanishing. The result here also valid for \eqref{fbp2} with $f$ satisfying \eqref{assumptionf}. We assume, in this section, that 
$f$ satisfies \eqref{assumptionf}. The proofs hear are inspired by the methods in \cite{C, CLZ}.
\begin{lemma}\label{fte1}
Let $(u,h)$ be the solution to \eqref{fbp2} on $[0, \hat{T})$. If $\lim_{t\nearrow\hat{T}}[h(t)-ct]=0$, then we have 
$\lim_{t\nearrow\hat{T}}\|u(t,\cdot)\|_{C[ct, h(t)]}=0$.
\end{lemma}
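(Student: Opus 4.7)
The strategy is to recycle the right-end barrier already built in the proof of Lemma \ref{hprime}. That barrier delivers a linear upper bound on $u$ in terms of $h(t)-x$ on a strip of \emph{fixed} width $1/M$ adjacent to the free boundary, so as soon as the habitat width $L(t):=h(t)-ct$ drops below $1/M$, the strip covers the whole cross-section $[ct,h(t)]$ and the linear bound forces $\max_{x}u(t,x)=O(L(t))$, which tends to $0$ by hypothesis.

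Concretely, let $M$ be the constant defined in \eqref{Mdef} (depending only on $h_0$, $C_1$ and $\|u_0'\|$, not on the time horizon), and set
\begin{equation*}
w(t,x):=-C_1M^2\bigl(x-h(t)\bigr)\bigl(x-h(t)+2/M\bigr)=C_1M\bigl(h(t)-x\bigr)\bigl(2-M(h(t)-x)\bigr).
\end{equation*}
The computation inside the proof of Lemma \ref{hprime} shows that, for every $T\in(0,\hat{T})$, $w$ is a supersolution of \eqref{fbp2} on $\Delta_M=\{(t,x):0<t<T,\ \max(ct,h(t)-1/M)<x<h(t)\}$, and the proof of Lemma \ref{hprime} already verifies the boundary comparison in both subcases $h(t)-1/M>ct$ and $h(t)-1/M\le ct$. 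Hence the standard parabolic comparison principle yields $u(t,x)\le w(t,x)$ throughout $\Delta_M$; letting $T\nearrow\hat{T}$ extends this inequality to every $t\in(0,\hat{T})$. On $\Delta_M$ the factor $2-M(h(t)-x)$ lies in $[1,2]$, whence
\begin{equation*}
u(t,x)\le 2C_1M\bigl(h(t)-x\bigr)\qquad\text{for every }t\in(0,\hat{T}) \text{ and } x\in\bigl[\max(ct,h(t)-1/M),h(t)\bigr].
\end{equation*}

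To finish, the assumption $\lim_{t\nearrow\hat{T}}L(t)=0$ provides some $t_1\in(0,\hat{T})$ with $L(t)<1/M$ whenever $t\in[t_1,\hat{T})$. For such $t$ one has $\max(ct,h(t)-1/M)=ct$, so the previous bound is valid on the entire cross-section, and taking the supremum over $x\in[ct,h(t)]$ gives
\begin{equation*}
\max_{x\in[ct,h(t)]}u(t,x)\le 2C_1M\,L(t)\longrightarrow 0\qquad\text{as }t\nearrow\hat{T},
\end{equation*}
which is the claim. The only delicate point is checking that $M$ and the supersolution/boundary inequalities from Lemma \ref{hprime} are uniform in the time horizon; this is manifest from \eqref{Mdef}, and as a consequence the argument is insensitive to whether $\hat{T}$ is finite or infinite, since the rate of vanishing of $\|u(t,\cdot)\|_{\infty}$ is governed purely by $L(t)$.
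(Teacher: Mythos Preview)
Your proof is correct and follows essentially the same approach as the paper: both recycle the quadratic barrier $w(t,x)=-C_1M^2(x-h(t))(x-h(t)+2/M)$ from Lemma~\ref{hprime}, observe that once $h(t)-ct<1/M$ the comparison region $\Delta_M$ covers the entire cross-section $[ct,h(t)]$, and then read off the bound $u(t,x)\le 2C_1M(h(t)-ct)\to 0$. Your write-up is slightly more explicit about the factor $2-M(h(t)-x)\in[1,2]$ and about the uniformity of $M$ in the time horizon, but there is no substantive difference.
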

\begin{proof} 
From the proof of Lemma \ref{hprime}, we have
\begin{align*}
u(t,x)\le w(t,x)\ \ {\rm on}\ \ \Delta_M, 
\end{align*}
where 
\begin{align*}
w(t,x)=-C_1M^2(x-h(t))(x-h(t)+2/M).
\end{align*}
and $\Delta_M$ is defined in \eqref{DeltaM}. By our assumption, there exists $\hat{T}_1>0$ such that $h(t)-(1/M)<ct<h(t)$ holds for $t\in(\hat{T}_1, \hat{T})$. Then we have 
\begin{align*}
u(t,x)\le w(t,x)\le 2C_1M(h(t)-ct)\ \ {\rm for}\ t\in(\hat{T}_1,\hat{T})\ x\in [ct, h(t)].
\end{align*}
Letting $t\nearrow \hat{T}_1$, we have $\lim_{t\nearrow\hat{T}}\|u(t,\cdot)\|_{C[ct,h(t)]}=0$. 
\end{proof}

\begin{proposition}\label{fte2}
Let $(u,h)$ be the unique solution to problem \eqref{fbp2} on $[0, \hat{T})$. If $\lim_{t\nearrow\hat{T}}[h(t)-ct]=0$. Then we have $\hat{T}<\infty$.
\end{proposition}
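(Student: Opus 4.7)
I argue by contradiction. Suppose $\hat T=\infty$, so that $k(t):=h(t)-ct>0$ for all $t>0$ and $k(t)\to 0$ as $t\to\infty$. By Lemma~\ref{fte1}, $\|u(t,\cdot)\|_{C[ct,h(t)]}\to 0$. The goal is to show this regime is incompatible with the PDE, forcing $k(t)$ to hit $0$ in finite time.

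The core identity comes from Taylor's theorem applied to $u(t,\cdot)\in C^2[ct,h(t)]$ expanded around $x=h(t)$ and evaluated at $x=ct$: for some $\xi(t)\in(ct,h(t))$,
\[
0=u(t,ct)=u(t,h(t))+u_x(t,h(t))(ct-h(t))+\tfrac12 u_{xx}(t,\xi(t))(ct-h(t))^2.
\]
Using $u(t,h(t))=0$ and the Stefan condition $u_x(t,h(t))=-h'(t)/\mu$, this rearranges to
\[
h'(t)=-\tfrac{\mu}{2}\,u_{xx}(t,\xi(t))\,k(t).
\]
Since $h'(t)>0$ and $k(t)>0$, necessarily $u_{xx}(t,\xi(t))<0$ (which is consistent with $\xi(t)$ sitting near an interior maximum of the positive profile $u(t,\cdot)$).

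The plan is to combine this identity with a uniform bound $|u_{xx}(t,\xi(t))|\le C_*$ for $t$ large. To obtain such a bound I would flatten the right free boundary by $y=h(t)-x$; then $v(t,y):=u(t,h(t)-y)$ on $y\in[0,k(t)]$ solves
\[
v_t=v_{yy}-h'(t)v_y+f(v),\qquad v(t,0)=v(t,k(t))=0,
\]
with uniformly bounded drift coefficient $h'(t)$ by Lemma~\ref{hprime}. Because $\|v\|_\infty=\|u\|_\infty\to 0$, boundary and interior parabolic Schauder estimates applied on fixed-scale parabolic neighborhoods of the boundaries $\{y=0\}$ and $\{y=k(t)\}$ (localized so as to avoid degeneration as $k(t)\downarrow 0$) yield the required $C^2$ bound, independent of $t$.

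Granted the bound $|u_{xx}(t,\xi(t))|\le C_*$, the identity gives $0<h'(t)\le (\mu C_*/2)\,k(t)\to 0$, so $h'(t)<c/2$ for $t$ large. Then $k'(t)=h'(t)-c<-c/2$, forcing $k(t)\to-\infty$ in finite time, which contradicts $k(t)>0$ and so contradicts $\hat T=\infty$. The principal obstacle is the uniform $C^2$ estimate on the shrinking domain $[ct,h(t)]$: standard Schauder theory requires care when the cylinder's horizontal scale $k(t)$ collapses, and it is precisely here that one must adapt the localized barrier and flattening arguments in the spirit of \cite{C,CLZ}, leveraging that the moving boundary $x=h(t)$ is uniformly Lipschitz by Lemma~\ref{hprime} so that the Schauder constants may be kept independent of $t$ for fixed-scale neighborhoods of the free boundary.
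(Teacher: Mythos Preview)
Your argument has a genuine gap at exactly the point you flag as ``the principal obstacle'': the uniform bound $|u_{xx}(t,\xi(t))|\le C_*$ cannot be obtained by the Schauder estimates you invoke. On a parabolic cylinder of spatial width $r$, the interior/boundary Schauder estimate has the form $\|D^2u\|_{L^\infty}\lesssim r^{-2}\|u\|_{L^\infty}+\text{(lower order)}$, and here the only cylinders that fit inside $\{y\in[0,k(t)]\}$ have $r\le k(t)\to 0$. Your phrase ``fixed-scale parabolic neighborhoods \dots\ localized so as to avoid degeneration'' is self-contradictory: once $k(t)$ is small there \emph{are} no fixed-scale spatial neighborhoods inside the domain. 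Even feeding in the quantitative bound $\|u(t,\cdot)\|_\infty\le 2C_1M\,k(t)$ that the barrier in Lemma~\ref{fte1} actually gives, the Schauder estimate only yields $|u_{xx}|\lesssim k(t)^{-2}\cdot k(t)=k(t)^{-1}\to\infty$, which is useless in your Taylor identity. Flattening the free boundary does not help: after the change of variables the spatial domain is still $[0,k(t)]$, and the Lipschitz bound on $h'$ from Lemma~\ref{hprime} controls only the drift coefficient, not the domain width.

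The paper's proof avoids second-derivative estimates on $u$ entirely. It uses Lemma~\ref{fte1} only to know that $\|u(t,\cdot)\|_\infty$ eventually drops below a prescribed $\varepsilon$, and then compares $(u,h)$ with an \emph{auxiliary} free boundary problem $(\zeta,\overline h)$ started fresh from explicit data $\zeta_0$ of size $O(\varepsilon)$ and with an explicit logistic nonlinearity dominating $f$ on $[0,\varepsilon]$. Because the initial data for this auxiliary problem are chosen by hand, one controls $\|\zeta_0'\|$ directly, and the same quadratic barrier as in Lemma~\ref{hprime} (now with amplitude $2\varepsilon$ instead of $C_1$) gives $\overline h'(t)\le 4\mu M\varepsilon$ with $M$ independent of $\varepsilon$. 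Choosing $\varepsilon<c/(4\mu M)$ forces $\overline h(t)-ct\to 0$ in finite time, and the comparison principle then transfers this to $(u,h)$. The key idea you are missing is this restart: rather than trying to prove $h'(t)\to 0$ for the given solution, construct a small-amplitude supersolution whose free-boundary speed is below $c$ by design.
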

\begin{proof}
For fixed $L>1$, define 
\begin{align*}
\zeta_0(x)=\frac{2\varepsilon}{L^2}(L^2-x^2)
\end{align*}
and consider
\begin{align}\label{zeta}
\begin{cases}
\zeta_t=\zeta_{xx}+\overline{f}(\zeta),       &t>0,\ 0<x<\overline{h}(t),\\
\zeta_x(t,0)=\zeta(t,\overline{h}(t))=0,      &t>0, \\
\overline{h}'(t)=-\mu \zeta_x(t,\overline{h}(t)),   &t>0, \\
\overline{h}(0)=L,\ \zeta(0,x)=\zeta_0(x), &0\leq x\leq L,
\end{cases}
\end{align}
where 
\begin{align*}
\overline{f}(\zeta)=2K\zeta\left(1-\frac{\zeta}{2\varepsilon}\right).
\end{align*}
We note that $f(u)\le\overline{f}(u)$ holds for $u\in[0,\varepsilon]$. By Theorem 2.3 of \cite{DLI}, problem \eqref{zeta} admits a unique global solution $(\zeta, \overline{h})$. Moreover, 
since $|\zeta_{0}|\le 2\varepsilon$ and $\overline{f}(\zeta)<0$ for $\zeta>2\varepsilon$, we have $\zeta(t,x)\le 2\varepsilon$ for $t>0$ and $x\in[0, \overline{h}(t)]$. We again consider following quadratic function $U^{\varepsilon}(t,x)$:
\begin{align*}
U^{\varepsilon}(t,x):=-2\varepsilon M^{2}(x-\overline{h}(t))(x-\overline{h}(t)+2/M)
\end{align*}
over $\hat{\Delta}_{M}:=\{(t,x):t>0, \max\{ct, \overline{h}(t)-1/M\}<x<\overline{h}(t)\}$. Since for $(t,x)\in\hat{\Delta}_{M}$
\begin{align*}
&U^{\varepsilon}_{t}(t,x)=2\varepsilon M^{2}\overline{h}'(t)\left(x-\overline{h}(t)+1/M\right)\ge 0, \\
&U^{\varepsilon}_{xx}(t,x)=-2\varepsilon M^{2},
\end{align*}
we have
\begin{align*}
U^{\varepsilon}_{t}-U^{\varepsilon}_{xx}-\overline{f}(U^{\varepsilon})\ge 2\varepsilon M^{2}-2K\varepsilon\ge 0
\end{align*}
on $\hat{\Delta}_{M}$. By a direct calculation and the assumption $L>1$, we see $\|\zeta_{0}'\|_{C[0, L]}\le 4\varepsilon$. By using comparison principle, 
we obtain $\zeta(t,x)\le U^{\varepsilon}(t,x)$ on $\hat{\Delta}_{M}$. Hence we have
\begin{align*}
-\mu\zeta_{x}(t,\overline{h}(t))\le -\mu U^{\varepsilon}_{x}(t,\overline{h}(t))=4\mu M\varepsilon\left(\le 4\mu\varepsilon\max\left\{\frac{\sqrt{2K}}{2}, 4\right\}\right).
\end{align*}
If we choose $\varepsilon>0$ so small that 
\begin{align}\label{epsilon}
\varepsilon<\frac{c}{4\mu M}
\end{align}
holds, we have $\overline{h}(t)-ct\to 0$ as $t\to\hat{T}_{0}\le\frac{L}{c-4\mu M\varepsilon}$ for some $\hat{T}_0>0$.

Now we fix $\varepsilon>0$ so small that (\ref{epsilon}) holds. By Lemma \ref{fte1}, there exists $\tilde{T}\in(0, \hat{T})$ such that $u(t,x)\le\varepsilon$ holds for $t\in(\tilde{T},\hat{T})$ and $x\in[ct, h(t)]$. If we take $L>\max\{\sqrt{2}h(\tilde{T}), 1\}$, we can easily see that 
$u(\tilde{T},x)\le \zeta_{0}(x)$ for $x\in[c\tilde{T}, h(\tilde{T})]$. By comparison principle we have $h(t+\tilde{T})-c(t+\tilde{T})\le \overline{h}(t)-ct-c\tilde{T}$ and so $\hat{T}$ cannot be $\infty$.
\end{proof}

Now we give a sufficient condition for vanishing.
\begin{proposition}\label{suffvanish1}
There exists a function $\psi_{h_0,c,\mu}$ such that if $u_0(x)\le \psi_{h_0,c,\mu}(x)$ for $x\in [0,h_0]$, then there exists $\hat{T}>0$ such that $\lim_{t\nearrow \hat{T}}(h(t)-ct)=0$, where $(u, h)$ is 
the unique solution to problem \eqref{fbp2}.
\end{proposition}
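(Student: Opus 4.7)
The plan is to reuse the auxiliary construction from the proof of Proposition \ref{fte2}: I will take $\psi_{h_0,c,\mu}$ to be the initial datum of an explicit upper solution of \eqref{fbp2} that shrinks to a point in finite time. Set $K := \max_{0 \le w \le 1}|f'(w)|$, $M := \max\{\sqrt{2K}/2,\,4\}$, and choose $\varepsilon \in (0,\, c/(4\mu M))$ small enough that $\overline{f}(\zeta) := 2K\zeta(1 - \zeta/(2\varepsilon))$ satisfies $\overline{f}(\zeta) \ge f(\zeta)$ on $[0, 2\varepsilon]$. Put $L := \max\{h_0,\, 2\}$ and define
\begin{equation*}
\psi_{h_0,c,\mu}(x) := \frac{2\varepsilon}{L^2}(L^2 - x^2), \qquad x \in [0, h_0].
\end{equation*}

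Now assume $u_0(x) \le \psi_{h_0,c,\mu}(x)$ on $[0, h_0]$. I would first consider the auxiliary problem \eqref{zeta} with this $\overline{f}$, initial width $L$, and initial datum $\zeta_0(x) = (2\varepsilon/L^2)(L^2 - x^2)$ on $[0, L]$; by Theorem 2.3 of \cite{DLI} it admits a unique global-in-time solution $(\zeta, \overline{h})$ with $\zeta \le 2\varepsilon$. Repeating verbatim the quadratic-barrier estimate $\zeta \le U^{\varepsilon}$ from the proof of Proposition \ref{fte2} then yields $\overline{h}'(t) \le 4\mu M \varepsilon < c$ for all $t > 0$, so there is a finite $\hat{T}_0 \le L/(c - 4\mu M\varepsilon)$ at which $\overline{h}(\hat{T}_0) = c \hat{T}_0$. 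I would then check that, after even reflection of $\zeta$ across $x = 0$, the restriction of $(\zeta, \overline{h})$ to $\{(t,x) : 0 \le t < \hat{T}_0,\ ct \le x \le \overline{h}(t)\}$ is an upper solution of \eqref{fbp2} in the sense of Lemma \ref{comp} with $\xi(t) := ct$: the differential inequality $\zeta_t - \zeta_{xx} - f(\zeta) = \overline{f}(\zeta) - f(\zeta) \ge 0$ is built in, the Stefan condition holds with equality at $x = \overline{h}(t)$, $u(t, ct) = 0 \le \zeta(t, ct)$, $h_0 \le L = \overline{h}(0)$, and $u_0 \le \zeta_0 = \psi_{h_0,c,\mu}$ on $[0, h_0]$ by hypothesis. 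Lemma \ref{comp} then gives $h(t) \le \overline{h}(t)$ for as long as $(u, h)$ exists, so $0 < h(t) - ct \le \overline{h}(t) - ct \to 0$ at some $\hat{T} \le \hat{T}_0 < \infty$.

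The main technical obstacle is the mismatch between the Dirichlet moving-boundary condition $u(t, ct) = 0$ of \eqref{fbp2} and the Neumann fixed-boundary condition $\zeta_x(t, 0) = 0$ of \eqref{zeta}. The even reflection device handles this cleanly: because $\zeta_x(t, 0) = 0$, the reflected function remains a $C^{1,2}$ classical solution of the same PDE across $x = 0$, so the inner boundary $x = ct \ge 0$ of our problem lies in the interior of the reflected domain, and the trivial inequality $0 = u(t, ct) \le \zeta(t, ct)$ supplies the inner boundary hypothesis required by Lemma \ref{comp}. Apart from this reflection, the entire construction is a time-translated version of the finite-extinction argument already used in Proposition \ref{fte2}; the only new ingredient is that the assumed smallness of $u_0$ now replaces the intermediate appeal to Lemma \ref{fte1} by which smallness of $u$ was obtained in the earlier proof.
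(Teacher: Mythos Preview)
Your approach has a genuine gap: the claim that one can choose $\varepsilon>0$ so that $\overline{f}(\zeta)=2K\zeta(1-\zeta/(2\varepsilon))$ satisfies $\overline{f}(\zeta)\ge f(\zeta)$ on $[0,2\varepsilon]$ is false whenever $f'(0)>0$, in particular for the Fisher--KPP nonlinearity $f(u)=u(1-u)$. Indeed $\overline{f}(2\varepsilon)=0$ while $f(2\varepsilon)=2\varepsilon(1-2\varepsilon)>0$ for every $\varepsilon\in(0,1/2)$, so the inequality fails near the top of the range of $\zeta$, and $\zeta$ is \emph{not} an upper solution of \eqref{fbp2} in the sense of Lemma~\ref{comp}. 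This is exactly the inequality you need for the comparison, so the argument does not go through as written.

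The reason the same construction works inside Proposition~\ref{fte2} is that there one already knows $u\le\varepsilon$ from Lemma~\ref{fte1}; hence $u$ is a \emph{lower} solution of the $\overline{f}$-free boundary problem (because $f(u)\le\overline{f}(u)$ on $[0,\varepsilon]$), and the comparison is applied in that direction. In your setting you only have $u_0\le\psi_{h_0,c,\mu}\le 2\varepsilon$, but nothing prevents $u$ from growing above $\varepsilon$, so neither direction of the comparison is available. The paper's proof avoids this by taking a different route: it sets $\psi_{h_0,c,\mu}(x)=\varepsilon^2 e^{-cx/2}\sin(\pi x/h_0)$, uses the explicit solution of the \emph{linear} equation $\zeta_t=\zeta_{xx}+K\zeta$ on the moving strip $[ct,ct+h_0]$ as an upper solution (here $K\zeta\ge f(\zeta)$ holds unconditionally), thereby keeping $u$ bounded by a prescribed small constant $C$ on a long enough time interval, and only then applies the quadratic barrier directly to $u$ to force $h'(t)\le c/2$.
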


\begin{proof}
Choose $C>0$ such that
\begin{align*}
2\sqrt{2K}C\mu\le c.
\end{align*}
For this $C$, we take $\varepsilon>0$ sufficiently small such that
\begin{align*}
\varepsilon<\min\left\{\frac{ch_0}{\mu\pi C}, Ce^{-\frac{2Kh_0}{c}},1\right\},\ 
\varepsilon^2\mu\left(\frac{c}{2}+\frac{\pi}{h_0}\right)<\frac{c}{4}.
\end{align*}
Now we consider the problem
\begin{align}\label{uppercos}
\left\{
\begin{array}{ll}
\zeta_t=\zeta_{xx}+K\zeta, &t>0, ct<x<h_0+ct, \\ 
\zeta(t, ct)=\zeta(t, h_0+ct)=0, &t>0, \\
\eta(0,x)=\phi_{h_0,c}(x),& 0\le x\le h_0,
\end{array}\right.
\end{align}
where $\psi_{h_0,c,\mu}(x)=\varepsilon^2e^{-\frac{c}{2}x}\sin\frac{\pi}{h_0}x$. Direct calculation and the choice of 
$\varepsilon$ yield that $\|\psi'_{h_0,c}\|_{C[0,h_0]}\le\frac{c}{4\mu}$. The solution of (\ref{uppercos}) is 
\begin{align*}
\zeta(t,x)=\varepsilon^2 e^{\left\{K-\left(\frac{\pi^2}{h_0^2}+\frac{c^2}{2}\right)\right\}t}e^{-\frac{c}{2}(x-ct)}
\sin\frac{\pi(x-ct)}{h_0}.
\end{align*}
Set
\begin{align*}
T:=\frac{1}{K}\log\frac{C}{\varepsilon}.
\end{align*}
By the choice of $\varepsilon$, $T>\frac{2h_0}{c}$ holds. Denote the solution of \eqref{fbp2} with initial data 
$u_0(x)=\psi_{h_0,c}(x)$ by $u(t,x)$. Now we can see that
\begin{align*}
-\mu\zeta_x(t, h_0+ct)\le \mu\varepsilon^2e^{Kt}\frac{\pi}{h_0}e^{-ch_0}\le\mu\varepsilon^2 e^{KT}\frac{\pi}{h_0}<c\ \ {\rm for}\ 
0\le t\le T, 
\end{align*}
by the choice of $\varepsilon$. We next note that
\begin{align*}
u(t, ct)=\zeta(t, ct)=0\ \ {\rm for}\ 0\le t\le T.
\end{align*}
Thus $(\zeta, ct+h_0)$ is an upper solution of (\ref{fbp2}) and 
\begin{align*}
u(t,x)\le \zeta(t,x)\le\varepsilon^2e^{KT}\le C\ \ {\rm for}\ ct\le x\le h(t), \ 0\le t\le T.
\end{align*}

Now we again consider the following function
\begin{align*}
\overline{U}(t,x)=-CM^2(x-h(t))(x-h(t)+2/M)
\end{align*}
over $\tilde{\Delta}_M:=\{(t,x)\in\mathbb{R}^2:0\le t<T,\ \max\{ct, h(t)-1/M, ct\}\le x\le h(t)\}$, where
\begin{align*}
M:=\max\left\{\frac{\sqrt{2K}}{2}, \frac{\|\psi_{h_0,c,\mu}'\|_{C[0,h_0]}}{C}\right\}.
\end{align*}
A direct calculation as in Lemma \ref{hprime} shows that $u(t,x)\le \overline{U}(t,x)$ on $Q$. So we have
\begin{align*}
-\mu u_x(t,h(t))\le-\mu \overline{U}_x(t, h(t))=2MC\mu \le\frac{c}{2}
\end{align*}
by choice of $C$. Thus we have
\begin{align*}
h'(t)-c=-\mu u_x(t,h(t))-c\le -\frac{c}{2}
\end{align*}
and
\begin{align*}
h(t)-ct=h_0+\int_0^t(h'(s)-c)\le h_0-\frac{c}{2}t\to 0\ \ {\rm as}\ t\to\frac{2h_0}{c}<T.
\end{align*}
Thus the solution $(u,h)$ can not be a global solution, that is vanishing happens for $(u,h)$. 
Therefore any solution of \eqref{fbp2} with the initial function less than $\psi_{h_0, c}$ also vanishes.
\end{proof}

\begin{corollary}\label{suffvanish2}
There exists a positive constant $C=C(h_0, c, \mu)$ such that if $\|u_0\|_{C[0, h_0]}\le C$, then there exists $\hat{T}>0$ such that $\lim_{t\nearrow \hat{T}}(h(t)-ct)=0$, where $(u, h)$ is the unique solution to \eqref{fbp2}.
\end{corollary}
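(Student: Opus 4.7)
The plan is to adapt the strategy of Proposition \ref{suffvanish1}. There, the upper solution $\zeta$ lives on the moving strip $[ct, ct+h_0]$ and vanishes at both edges, which forces the pointwise condition $u_0\le \psi_{h_0,c,\mu}$; this cannot be guaranteed by the sup-norm bound $\|u_0\|_{C[0,h_0]}\le C$ alone, since an initial datum in $\mathscr{X}(h_0)$ can have arbitrarily steep slopes near the endpoints while $\|u_0\|_\infty$ stays small. To remove this obstruction I would widen the moving strip to $[ct, ct+h_0+\delta]$ for some small $\delta>0$, so that the upper solution is strictly positive on the physical interval $[0,h_0]$ and hence dominates any $u_0$ with small enough sup norm.

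Concretely, fix $\delta>0$, set $L:=h_0+2\delta$, and consider
\begin{equation*}
\overline{u}(t,x) := A\,e^{(K-\pi^2/L^2-c^2/4)\,t}\,e^{-c(x-ct+\delta)/2}\sin\frac{\pi(x-ct+\delta)}{L}
\end{equation*}
on the strip $\{ct\le x\le ct+h_0+\delta\}$, with $\overline{h}(t):=ct+h_0+\delta$. A direct computation shows that $\overline{u}$ solves $\overline{u}_t=\overline{u}_{xx}+K\overline{u}$, is positive on its domain, and vanishes only at $x=\overline{h}(t)$. I would then verify, for $A$ small and $T>2h_0/c$, that $(\overline{u},\overline{h})$ is an upper solution of \eqref{fbp2} on $[0,T]$ in the sense of Lemma \ref{comp}: the differential inequality follows from $f(u)\le Ku$; the left-boundary comparison $\overline{u}(t,ct)>0=u(t,ct)$ is automatic; the Stefan-like condition $c\ge -\mu\overline{u}_x(t,\overline{h}(t))$ is secured by taking $A$ small (relative to $T$); and the initial comparison $\overline{u}(0,x)\ge u_0(x)$ on $[0,h_0]$ holds once $\|u_0\|_{C[0,h_0]}$ is no larger than $\min_{[0,h_0]}\overline{u}(0,\cdot)=A\,e^{-c(h_0+\delta)/2}\sin(\pi\delta/L)$. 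Setting
\begin{equation*}
C := A\,e^{-c(h_0+\delta)/2}\sin(\pi\delta/L),
\end{equation*}
this yields $C=C(h_0,c,\mu)$.

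Lemma \ref{comp} then gives $h(t)\le ct+h_0+\delta$ and $\|u(t,\cdot)\|_\infty\le \overline{M}$ on $[0,T]$, with $\overline{M}$ small once $A$ is small. Following the closing steps of Proposition \ref{suffvanish1}, I would deduce $h'(t)-c\le -c/2$ via the quadratic barrier $w(t,x)=-\overline{M}M_0^2(x-h(t))(x-h(t)+2/M_0)$, so that $h(t)-ct$ shrinks to $0$ in time at most $2h_0/c$. The main obstacle is that this barrier dominates $u$ at the initial instant only when $M_0\gtrsim \|u_0'\|_{C[0,h_0]}/\overline{M}$, and $\|u_0'\|_\infty$ is not controlled by $\|u_0\|_\infty$. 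I would bypass this by deferring the barrier argument to $t\ge t_1$ for some small fixed $t_1\in(0,T)$ and invoking interior parabolic regularity---applied to the problem obtained by straightening the moving domain via $y:=(x-ct)/(h(t)-ct)$, whose coefficients are bounded by Lemma \ref{hprime}---to bound $|u_x(t_1,h(t_1))|$ by a quantity depending only on $\overline{M},h_0,c,\mu$, and $t_1$. Taking $\overline{M}$ (and hence $C$) small enough then permits the quadratic barrier on $[t_1,T]$ and gives $h(t)-ct\to 0$ in time at most $t_1+2h_0/c<T$.
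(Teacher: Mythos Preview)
Your widening idea---placing the initial interval strictly inside a larger moving strip so that the comparison function is strictly positive on $[0,h_0]$---is exactly the right instinct, and it is also what the paper does. The difference is in how the argument is closed, and that is where your proposal has a genuine gap.

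\medskip

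\textbf{The gap.} Your appeal to ``interior parabolic regularity \dots\ whose coefficients are bounded by Lemma~\ref{hprime}'' does not work as stated. After straightening via $y=(x-ct)/(h(t)-ct)$, the equation for $v(t,y)=u(t,ct+y\rho(t))$ has diffusion coefficient $1/\rho(t)^{2}$ and drift $(c+y\rho'(t))/\rho(t)$, where $\rho(t)=h(t)-ct$. Two quantities enter these coefficients that are \emph{not} controlled uniformly over all $u_0\in\mathscr{X}(h_0)$ with $\|u_0\|_{C[0,h_0]}\le C$:
\begin{itemize}
\item The bound on $h'(t)$ from Lemma~\ref{hprime} is $h'(t)\le 2\mu C_1 M$ with $M=\max\{\sqrt{2K}/2,\ \|u_0'\|_{C[0,h_0]}/C_1\}$, and hence depends on $\|u_0'\|$, which a sup--norm hypothesis does not control.
\item No lower bound on $\rho(t)$ is available on $[0,t_1]$; the domain width could be arbitrarily small there even though $T^*>t_1$.
\end{itemize}
Consequently the Schauder constant you would invoke depends on $\|u_0'\|$ and on $\inf_{[0,t_1]}\rho$, and the bound on $|u_x(t_1,h(t_1))|$ you claim cannot be made to depend only on $\overline{M},h_0,c,\mu,t_1$. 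This is precisely the circularity you were trying to avoid at $t=0$, now reappearing at $t=t_1$.

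\medskip

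\textbf{How the paper closes the argument, and how you could too.} Rather than bounding $h'(t)$ after the fact, the paper compares \emph{free boundary problem with free boundary problem}. It applies Proposition~\ref{suffvanish1} on the wider interval $[0,2h_0]$ to obtain a function $\psi_{2h_0,c,\mu}\in\mathscr{X}(2h_0)$ for which vanishing is already known, sets $C:=\inf_{[h_0/2,\,3h_0/2]}\psi_{2h_0,c,\mu}>0$, shifts the original data by $h_0/2$ so that $u_0(\cdot-h_0/2)\le\psi_{2h_0,c,\mu}$ on $[h_0/2,3h_0/2]$, and then invokes Lemma~\ref{comp} between the two free boundary problems. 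Because the upper problem has a fixed, explicit initial datum with controlled derivative, Proposition~\ref{suffvanish1} handles it entirely; the comparison then forces the smaller free boundary to collapse as well. Your own construction is one step away from this: your $\overline{u}(0,\cdot)$, shifted by $\delta$, is exactly of the form $\psi_{L,c,\mu}$ with $L=h_0+2\delta$. Had you compared $(u,h)$ (after a shift by $\delta$) with the free boundary solution launched from that profile---instead of with the prescribed boundary $\overline{h}(t)=ct+h_0+\delta$---Proposition~\ref{suffvanish1} would finish the proof without any gradient estimate on $u$.
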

\begin{proof}
Consider $\psi_{2h_0,c,\mu}(x)$ and define $C(h_0, c,\mu):=\inf_{x\in\left[\frac{1}{2}h_0, \frac{3}{2}h_0\right]}\psi_{2h_0,c,\mu}(x)>0$. Suppose that $u_0(x)$ satisfies  $u_0(x)\le C(h_0, c,\mu)$ 
for $x\in [0, h_0]$, then we have 
$u_0\left(x-\frac{h_0}{2}\right)\le \psi_{2h_0,c}(x)$ for $x\in[\frac{h_0}{2}, \frac{3}{2}h_0]$. Now we consider the following free boundary problem
\begin{align*}
\begin{cases}
u_t=u_{xx}+f(u),          &t>0,\ ct+\frac{h_0}{2}<x<h(t),\\
u\left(t,ct+\frac{h_0}{2}\right)=u(t,h(t))=0,      &t>0, \\
h'(t)=-\mu u_x(t,h(t)),   &t>0, \\
h(0)=\frac{1}{2}h_0,\ u(0,x)=u_0\left(x-\frac{h_0}{2}\right), &\frac{h_0}{2}\leq x\leq \frac{3}{2}h_0.
\end{cases}
\end{align*}
Denote $(u_1, h_1)$ the solution to above problem and $(u_2, h_2)$ the solution to the problem \eqref{fbp2} with initial function $\psi_{2h_0,c}$. By comparison principle, 
we have 
\begin{align*}
&u_1(t,x)\le u_2(t,x),\ {\rm for}\ t\in (0, T^*_1),\ \ x\in \left[ct+\frac{h_0}{2}, h_1(t)\right], \\ 
&h_1(t)\le h_2(t), {\rm for}\ t\in (0, T^*_1),
\end{align*}
where $T^*_1$ is the maximal existence time of solution $(u_1, h_1)$. By Lemma \ref{suffvanish1}, there exists $\hat{T}>0$ such that $\lim_{t\nearrow \hat{T}}(h_2(t)-ct)=0$. 
Hence there exists $\hat{T}_0>0$ such that $\lim_{t\nearrow\hat{T}_0}\left(h_1(t)-\left(ct+\frac{h_0}{2}\right)\right)=0$. Since the solution to \eqref{fbp2} with initial function $u_0$ is expressed by 
$\left(u_1\left(t,x+\frac{h_0}{2}\right), h_1(t)-\frac{h_0}{2}\right)$, we have obtained the conclusion.
\end{proof}

Finally we can obtain the following proposition.
\begin{proposition}\label{iff}
Let $(u,h)$ be the unique solution to problem \eqref{fbp2} on $[0, T^*)$ with maximal existence time $T^*$. Then $T^*<\infty$ if and only if $\lim_{t\nearrow T^*}(h(t)-ct)=0$.
\end{proposition}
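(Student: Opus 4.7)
The proposition is an \emph{if and only if}, so my plan is to handle each direction separately. The backward implication ``$\lim_{t\nearrow T^*}(h(t)-ct)=0 \Rightarrow T^*<\infty$'' is precisely the content of Proposition \ref{fte2}, so I would simply invoke it and concentrate all effort on the forward implication.

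Assuming $T^*<\infty$, set $\rho(t):=h(t)-ct$. I would first show that $\inf_{t\in[0,T^*)}\rho(t)=0$. If instead this infimum were strictly positive, then in particular $\inf_{t\in(0,T_0)}\rho(t)>0$ for every $T_0<T^*$, so the extension clause of Lemma \ref{hprime} would produce some $\overline{T}>T_0$ to which the solution extends. Iterating, or taking $T_0$ close enough to $T^*$, yields a solution on an interval strictly larger than $[0,T^*)$, contradicting the definition of $T^*$. Since the solution is classical on every $[0,T_0]\subset[0,T^*)$, $\rho$ is continuous and strictly positive on each such compact interval, so the infimum $0$ cannot be attained in the interior; hence it is approached along a sequence $t_n\nearrow T^*$ with $\rho(t_n)\to 0$.

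The remaining step is to upgrade this liminf-type conclusion to an actual limit. The a priori estimate $h'(t)\le\mu C_2$ from Lemma \ref{hprime} gives $\rho'(t)=h'(t)-c\le\mu C_2-c$, so with $L:=\max\{0,\mu C_2-c\}$, integration yields
\begin{equation*}
\rho(s)\le\rho(t_n)+L(s-t_n)\quad\text{for all } s\in[t_n,T^*).
\end{equation*}
Given $\varepsilon>0$, choose $n$ large enough that $\rho(t_n)<\varepsilon/2$ and, using $T^*<\infty$, $L(T^*-t_n)<\varepsilon/2$. Then $\rho(s)<\varepsilon$ for every $s\in[t_n,T^*)$, so $\lim_{t\nearrow T^*}\rho(t)=0$.

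I do not anticipate any substantive obstacle. The only mildly delicate point is that a priori $\rho$ could oscillate as $t\nearrow T^*$, and then Step~1 alone would only yield a subsequential conclusion; the one-sided Lipschitz control provided by the bound on $h'$, together with the finiteness of $T^*$, precisely rules out oscillation and delivers the genuine limit.
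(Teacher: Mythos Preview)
Your argument is correct, and it takes a different (and cleaner) route than the paper for the forward implication. Both proofs begin the same way: invoke Proposition \ref{fte2} for the backward direction, and use the extension clause of Lemma \ref{hprime} to conclude that $\inf_{t\in(0,T^*)}\rho(t)=0$, hence $\rho(t_n)\to 0$ along some sequence $t_n\nearrow T^*$. From there, the paper upgrades the subsequential conclusion to a genuine limit by a ``restart'' argument: the estimate from the proof of Lemma \ref{fte1} gives $\|u(t_n,\cdot)\|_{C[ct_n,h(t_n)]}\to 0$, and then the smallness criterion of Corollary \ref{suffvanish2}, applied with initial time $t_n$ for large $n$, forces the interval to collapse at some finite time $\hat T$; finally Lemma \ref{hprime} identifies $\hat T$ with $T^*$. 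You instead exploit the uniform bound $h'(t)\le\mu C_2$ directly, obtaining a one-sided Lipschitz estimate on $\rho$ that, combined with $T^*<\infty$, squeezes $\rho(s)<\varepsilon$ for all $s$ close to $T^*$. Your approach avoids the auxiliary vanishing machinery entirely and uses only the a priori bound on $h'$; the paper's route, while heavier, has the minor conceptual benefit of showing that once the interval and solution become small the subsequent dynamics are controlled by the vanishing mechanism already established.
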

\begin{proof}
By Proposition \ref{fte2}, we have that if $\lim_{t\nearrow T^*}(h(t)-ct)=0$ then $T^*<\infty$. 

Suppose that $T^*<\infty$. Then by Lemma \ref{hprime} we have that $\inf_{t\in (0,T^*)}(h(t)-ct)=0$ and there exists $\{t_n\}$ with $\lim_{n\to\infty}t_n=T^*$ such that 
$\lim_{n\to\infty}(h(t_n)-ct_n)=0$. By the proof of Lemma \ref{fte1}, we can show that $\lim_{n\to\infty}\|u(t_n,\,\cdot\,)\|_{C[ct_n, h(t_n)]}=0$. Then by the proof of Corollary \ref{suffvanish2} we can conclude 
that there exists $0<\hat{T}<\infty$ such that $\lim_{t\nearrow \hat{T}}(h(t)-ct)=0$. By Lemma \ref{hprime} again, $T^*=\hat{T}$ must be holds.
\end{proof}

\section{Proof of Theorem A}
In this section we will prove Theorem A. By Proposition \ref{iff}, if $T^*<\infty$, then the vanishing case in Theorem A happens. 
Therefore, to prove Theorem A, it suffices to prove the following theorem.
\begin{thm}
Suppose that $c\in(0, c^*)$ and $(u, h)$ is the unique solution of \eqref{fbp} defined for all $t>0$. Then either of the following occurs
\begin{enumerate}[{\rm (1)}]
\item $\lim_{t\to\infty}(h(t)-ct)=\infty$ and spreading happens;
\item $\lim_{t\to\infty}(h(t)-ct)=L_c$ and transition happens.
\end{enumerate}
\end{thm}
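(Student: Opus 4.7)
The plan is to pass to the moving coordinate $y = x - ct$, setting $v(t,y) := u(t,y+ct)$ and $k(t) := h(t)-ct$, so that the system becomes the autonomous free boundary problem
\begin{align*}
v_t = v_{yy} + cv_y + v(1-v)\ \text{ in }\ (0,k(t)),\ \ v(t,0) = v(t,k(t)) = 0,\ \ k'(t) = -\mu v_y(t,k(t)) - c,
\end{align*}
whose unique nontrivial stationary solution is exactly $(V_c, L_c)$ from Proposition \ref{compact-support-wave-ex}. The conclusion then reduces to analyzing the long-time behavior of this semiflow.

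First I would establish $\liminf_{t\to\infty} k(t) > 0$: if $k(t_n) \to 0$ along some $t_n \to \infty$, then the quadratic-barrier estimate from the proof of Lemma \ref{fte1} forces $\|u(t_n, \cdot)\|_\infty \to 0$, and Corollary \ref{suffvanish2} applied at initial time $t_n$ (for $n$ large) produces finite-time vanishing, contradicting $T^* = \infty$. Next I split according to whether $k(t)$ is bounded. If $\limsup k(t) = \infty$, I would construct a sub-solution from a truncation of the semi-wave $q^*$ of Proposition \ref{semi-wave}: for any small $\varepsilon > 0$, once $k(t_0)$ exceeds a threshold depending on $\varepsilon$, a translated $q^*$-profile moving at speed $c^*-\varepsilon > c$ can be fitted under $u(t_0, \cdot)$, which forces $k(t) \to \infty$ (upgrading $\limsup$ to $\lim$) and $\liminf h(t)/t \ge c^*-\varepsilon$. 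Combined with the estimate $h(t) \le c^* t + C_0$ of Proposition \ref{upperestspeed}, this yields $h(t)/t \to c^*$. The local convergence $u \to 1$ on $[(c+\varepsilon)t,(c^*-\varepsilon)t]$ then follows from a sandwich: Lemma \ref{ODE-estimate} bounds $u$ from above by $1+Me^{-\delta t}$, and translated copies of $w_\ell$ from Lemma \ref{auxelliptic} with $\ell$ large (so $w_\ell$ is close to $1$ on compact sets) furnish a moving lower sub-solution.

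Now suppose $k(t) \le M < \infty$ for all $t$. By Lemma \ref{hprime} and standard parabolic Schauder estimates, $v(t,\cdot)$ is uniformly bounded in $C^2$ and $k$ is uniformly bounded in $C^1$, so the $\omega$-limit set $\omega$ of $(v(t,\cdot), k(t))$ in the moving frame is nonempty, compact, and semiflow-invariant. It remains to show $\omega = \{(V_c, L_c)\}$. For any $(\tilde v_0, \tilde k_0) \in \omega$, I would extract a bounded entire orbit $(\tilde v(t), \tilde k(t))$, $t \in \mathbb{R}$, lying in $\omega$ and consider $w(t,y) := \tilde v(t,y) - V_c(y)$ on $[0, \min(\tilde k(t), L_c)]$. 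A direct computation gives the linear parabolic equation $w_t = w_{yy} + cw_y + (1 - \tilde v - V_c)w$, with $w(t,0) = 0$ and $w$ having a definite sign at the right endpoint whenever $\tilde k(t) \ne L_c$. Rescaling the moving interval to $[0,1]$ and applying Lemma \ref{zeronumber0}, the zero number of $w(t, \cdot)$ is finite, non-increasing, and strictly decreases at any degenerate interior zero. Because the orbit is bi-infinite and $\omega$ is bounded, this zero number is eventually constant in both time directions, which together with the uniqueness in Proposition \ref{compact-support-wave-ex} forces $\tilde v \equiv V_c$ and $\tilde k \equiv L_c$. Hence $\omega = \{(V_c, L_c)\}$, giving convergence and transition.

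The main obstacle is the zero-number analysis at times where $\tilde k(t) = L_c$: at such crossings the right endpoint becomes a zero of $w$, so Lemma \ref{zeronumber0} cannot be applied directly on a neighborhood of those times. One must show that any such crossing either produces a degenerate interior zero of $w$ (forcing a strict drop in the zero number and ultimately a contradiction with bi-infinite boundedness of the orbit) or matches the slopes of $\tilde v$ and $V_c$ at $y=0$, at which point ODE uniqueness identifies the two profiles. A secondary technical point is ensuring, in the spreading case, that the semi-wave sub-solution is self-sustaining so that the upgrade from $\limsup k(t) = \infty$ to $\lim k(t) = \infty$ holds without assuming any monotonicity of $k$.
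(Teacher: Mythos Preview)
Your moving-frame reduction and the split on boundedness of $k(t)=h(t)-ct$ match the paper, and your sketches of the spreading-convergence and transition-convergence endgames are close to what the paper does. However, each branch has a gap that the paper fills with a specific zero-number argument you do not propose.

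In the unbounded case, fitting any sub-solution under $u(t_0,\cdot)$ requires $u(t_0,\cdot)$ to be \emph{large} on an interval, and $k(t_0)$ large alone does not give that; the upgrade $\limsup k=\infty\Rightarrow\lim k=\infty$ is the crux, not a side issue. The paper (Lemma~\ref{Hcinfty}) compares $u$ with the shifted compact wave $V_c(x-ct-\tilde l)$ and uses a zero-number/Hopf argument to show that at the \emph{first} time $t_1$ with $H_c(t_1)=\tilde l+L_c$ one already has $u(t_1,\cdot)\ge V_c(\cdot-ct_1-\tilde l)$; only then does comparison push $k(t)\to\infty$, after which $V_{\tilde c}$ for $\tilde c\in(c,c^*)$ (not a truncated $q^*$) serves as the sub-solution giving $h(t)/t\to c^*$ (Lemma~\ref{Hcunbounded}). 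In the bounded case, your $\omega$-limit argument works only if the entire limit orbit has $\tilde k(t)$ constant: then the free-boundary condition gives $\tilde v_y(t,\tilde k)=-c/\mu=V_c'(L_c)$, the right endpoint is a permanent degenerate zero of $\tilde v-V_c$, and the contradiction follows. Your fallback at crossings $\tilde k=L_c$---that matching slopes let ``ODE uniqueness identify the profiles''---fails because $\tilde v(t,\cdot)$ solves a PDE, not the $V_c$ ODE, unless $\tilde v_t\equiv 0$. The missing step is precisely that $\lim_{t\to\infty}H_c(t)$ exists; the paper proves this first (Lemma~\ref{sign}, Proposition~\ref{limit}) by comparing the \emph{original} $v$ with $V_c(\cdot-b+L_c)$ for each $b\ne L_c$ and showing that every crossing $H_c(t)=b$ forces a strict drop in the zero count of the difference, so only finitely many crossings occur. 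With $H_c\to H_c^*$, the limit problem lives on a fixed interval and the degenerate-zero contradiction (Proposition~\ref{HctoLc}) proceeds exactly as you sketch.
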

Throughout this section we assume that $0<c<c^*$ and $(u,h)$ is a global solution to \eqref{fbp}. 
Let $H_c(t):=h(t)-ct$. By Proposition \ref{fte2} we have $H_c(t)>0$ for any $t>0$.

\subsection{Some properties of $h(t)$}
\begin{lemma}\label{Hcinfty}
Suppose that $H_c(t)$ is unbounded, we have $\lim_{t\to\infty}H_c(t)=\infty$.
\end{lemma}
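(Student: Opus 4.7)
I would prove this by contradiction. Suppose $\limsup_{t\to\infty}H_c(t)=\infty$ while $\liminf_{t\to\infty}H_c(t)=:M_0<\infty$. Since $H_c\in C^1$ by Lemma \ref{hprime} and $H_c(t)>0$ for every $t>0$ by Proposition \ref{fte2}, an oscillating orbit produces, for any chosen threshold $L^*>\max\{M_0+1,L_c+1\}$, a sequence $\tau_n\to\infty$ of descending crossings: each $\tau_n$ is preceded by some $\sigma_n<\tau_n$ with $H_c(\sigma_n)\ge L^*+1$ and satisfies $H_c(\tau_n)=L^*$, $H_c'(\tau_n)\le 0$. Because $h'(t)>0$ implies $H_c'(t)\ge -c$, the descent forces $\tau_n-\sigma_n\ge 1/c$, giving a definite time window of descent before each $\tau_n$.

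In the moving frame $v(t,y):=u(t,y+ct)$ the system reads $v_t=v_{yy}+cv_y+v(1-v)$ on $0<y<H_c(t)$, with Dirichlet data at both endpoints and free boundary law $H_c'(t)=-\mu v_y(t,H_c(t))-c$. Using the bound $u\le C_1$ of Lemma \ref{ODE-estimate}, the bound $h'\le\mu C_2$ of Lemma \ref{hprime}, and interior and boundary parabolic Schauder estimates, the translated families $v(t+\tau_n,\cdot)$ and $H_c(\cdot+\tau_n)$ are precompact. A diagonal extraction yields a limit $(v_\infty,H_\infty)$ solving the same system on some backward time interval $[-T_0,0]$ with $T_0\ge 1/c$, subject to $H_\infty(0)=L^*$, $H_\infty'(0)\le 0$, and $\max_{t\in[-T_0,0]}H_\infty(t)\ge L^*+1$.

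The contradiction is then to be extracted from a stationary lower barrier. Since $c<c^*<c_0$, Lemma \ref{auxelliptic} supplies $w_L$ for all large $L$. I would fix $L$ with $2L<L^*$ large enough that the right-end slope $-\mu w_L'(L)$ strictly exceeds $c$, a property I expect to hold in the spirit of Proposition \ref{compact-support-wave-ex}, since the compactly supported transition profile $V_c$ realises the critical matching $-\mu V_c'(L_c)=c$ at the minimal domain size while enlarging the domain sharpens the edge slope. Then the translate $\underline{v}(y):=(1-\epsilon)w_L(y-(L^*-L))$ is a stationary subsolution of the moving-frame equation on the fixed subinterval $[L^*-2L,L^*]\subset[0,H_\infty(t)]$ with Dirichlet value $0$ at both endpoints. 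If I can verify $v_\infty(t,y)\ge\underline{v}(y)$ on this subinterval for all $t\in[-T_0,0]$, Lemma \ref{comp}-type comparison on the fixed subdomain propagates this forward and, at $t=0$, yields $-\mu v_{\infty,y}(0,L^*)\ge-\mu(1-\epsilon)w_L'(L)>c$, contradicting $H_\infty'(0)\le 0$.

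The main obstacle is establishing the initial domination $v_\infty(-T_0,\cdot)\ge\underline{v}(\cdot)$. The key leverage is that the unbounded excursions of $H_c$ can be picked to keep $H_c\ge L^*+1$ over arbitrarily long time windows preceding each $\tau_n$; enlarging the backward window correspondingly and iterating the comparison with $w_l$ for a slowly growing sequence $l=L_1<L_2<\cdots$, each intermediate lower bound feeding the next subsolution argument, one pumps the bulk value of $v_\infty$ up to $1-\epsilon$ on the required subinterval at the initial time. This iteration, which closely parallels bulk-recovery constructions carried out in \cite{DWZ,DLI}, is the delicate step; once it is in place the barrier inequality above closes the argument.
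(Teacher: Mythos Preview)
Your approach differs substantially from the paper's and carries a real gap at the step you yourself flag as delicate. The paper proceeds directly, with no compactness or limiting: fix any large $\tilde{l}>h_0$ and use the compactly supported wave $w(t,x)=V_c(x-ct-\tilde{l})$. Since $H_c$ is unbounded there is a first time $t_1$ with $H_c(t_1)=\tilde{l}+L_c$ (so $H_c'(t_1)\ge 0$), and a last preceding time $t_2$ with $H_c(t_2)=\tilde{l}$. On $(t_2,t_1)$ the difference $\eta=u-w$ solves a linear parabolic equation with $\eta>0$ at the left edge $x=ct+\tilde l$ and $\eta<0$ at the right edge $x=h(t)$; Lemma~\ref{zeronumber} together with the Hopf lemma forces the unique interior zero of $\eta(t,\cdot)$ to migrate to the right endpoint as $t\nearrow t_1$, whence $u(t_1,\cdot)>V_c(\cdot-ct_1-\tilde{l})$ on $[ct_1+\tilde l,\,ct_1+\tilde l+L_c]$. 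Lemma~\ref{comp} with $(\underline u,\underline h)=(w,\,ct+\tilde l+L_c)$ then gives $H_c(t)>\tilde l+L_c$ for all $t>t_1$, and since $\tilde l$ was arbitrary, $H_c\to\infty$.

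In your scheme the barrier step itself is sound (and the inequality $-\mu w_L'(L)>c$ for large $L$ does hold, since $w_L'(L)\to q_c'(0)$ with $-\mu q_c'(0)>c$ for $c<c^*$ by the monotonicity established in \cite{DL}), but the initial domination $v_\infty(-T_0,\cdot)\ge (1-\epsilon)w_L(\cdot-(L^*-L))$ is not established. At the start of the backward window you have only strict positivity of $v$, with no floor independent of $n$; your bootstrap with a growing family $w_{L_k}$ still needs a quantitative seed to start, and eternal positive solutions of the moving-frame problem on $[0,L^*]$ that stay strictly below the steady state do exist (heteroclinics emanating from $0$), so positivity of the limit $v_\infty$ alone cannot supply that seed either. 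Producing the required uniform lower bound essentially amounts to the interior convergence-to-$1$ analysis that the paper develops only \emph{after} the present lemma (cf.\ Lemma~\ref{Hcunbounded} and the appendix proof of Proposition~\ref{spreadingconv}), so invoking it here would be circular. The zero-number route above manufactures precisely the missing lower bound $u>V_c(\cdot-ct-\tilde l)$ at the ascending crossing, and I recommend you adopt it.
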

\begin{proof} One can prove this lemma by same approach as in \cite{DWZ}. For reader's convenience we give the proof of this lemma. We fix any large $\tilde{l}>h(0)=h_{0}$ and define 
\begin{align*}
w(t,x):=V_{c}(x-ct-\tilde{l}),\ \ t>0,\ ct+\tilde{l}\le x\le ct+\tilde{l}+L_{c},
\end{align*}
where $(L_{c}, V_{c})$ is the unique solution pair to problem \eqref{compact-support-wave}. $w$ satisfies
\begin{align*}
\left\{
\begin{array}{ll}
w_{t}=w_{xx}+w(1-w), & t>0,\ ct+\tilde{l}<x<ct+\tilde{l}+L_{c}, \\
w(t, ct+\tilde{l})=w(t, ct+\tilde{l}+L_{c})=0, & t>0, \\
w(0,x)=V_{c}(x-\tilde{l}), & \tilde{l}\le x\le \tilde{l}+L_{c}, \\
-\mu w_{x}(t, ct+\tilde{l}+L_{c})=c, & t>0.
\end{array}\right.
\end{align*}
Since $H_{c}(t)$ is unbounded, there exists $t_{1}>0$ such that $h(t_{1})-ct_{1}=\tilde{l}+L_{c}$. We can find $t_{2}\in (0, t_{1})$ such that 
$h(t_{2})-ct_{2}=\tilde{l}$ and $h(t)-ct\in(\tilde{l}, \tilde{l}+L_{c})$ for $t\in (t_{2}, t_{1})$. By the definition of $t_{1}$, $h'(t_{1})-c\ge 0$. Define
\begin{align*}
\eta(t,z):=u(t,x)-w(t,x).
\end{align*}
The function $\eta$ satisfies the following linear parabolic equation: 
\begin{align*}
\eta_t=\eta_{xx}+m(t,x)\eta,\ \ t>t_1,\ \ \tilde{l}+ct<x<\min\{\tilde{l}+ct+L_c,\ h(t)\},
\end{align*}
where $m(t,x)$ is some bounded function. Since $\eta(t, \tilde{l}+ct)=u(t, \tilde{l}+ct)>0$ and $\eta(t, h(t))=-w(t, h(t))<0$, we can apply Lemma \ref{zeronumber} to conclude that $\eta(t, \cdot)$ 
has finite number of zeros on $[ct+\tilde{l}, ct+\tilde{l}+L_{c}]$ for $t\in(t_{2}, t_{1})$. 
For $t$ just after $t_{2}$, by using the Hopf Lemma, $\eta(t,\cdot)$ have just one zero on $[\tilde{l}+ct, h(t)]$. By \cite{A}, zero number of $\eta(t, \cdot)$ on $[\tilde{l}+ct, h(t)]$ is nonincreasing, 
so $\eta(t, \cdot)$ has exactly one zero, say $z(t)$, on $[\tilde{l}+ct, h(t)]$ for $t\in (t_2, t_1)$. Moreover $z\in C^1(t_1, t_2)$.

We now claim that $\lim_{t\nearrow t_1}z(t)$ exists. Otherwise
\begin{align*}
\tilde{l}+ct_1\le \underline{z}:=\liminf_{t\nearrow t_1}z(t)<\limsup_{t\nearrow t_1}z(t)=:\overline{z}\le \tilde{l}+ct_1+L_c.
\end{align*}
It is easily seen that $\eta(t_1, \cdot)\equiv 0$ on $[\underline{z},\ \overline{z}]$. As in the proof in \cite{CLZ}, \cite{DLZ} and \cite{KM}, we may apply Theorem 2 in \cite{F} to $\eta$ 
over $[t_1-\varepsilon,\ t_1]\times [\tilde{l}+c(t_1+\varepsilon),\ h(t_1-\varepsilon)]$ with sufficiently small $\varepsilon>0$. By letting $\varepsilon\to 0$, we deduce $\eta(t_1, z)\equiv 0$ on 
$[\tilde{l}+ct_1,\ h(t_1)]$. However, this is impossible since $\eta(t_1, \tilde{l}+ct_1)>0$. Therefore $\lim_{t\nearrow t_1}z(t)$ exists. 

Next we claim that $z(t_1)=ct_1+\tilde{l}+L_c$. Suppose that $z(t_1)<ct_1+\tilde{l}+L_c$. Since $\eta$ satisfies 
\begin{align*}
&\eta(t,x)<0\ \ {\rm on}\ \ \{(t,x)\in\mathbb{R}^2\ |\ t\in(t_2, t_1),\ z(t)<x<h(t)\}, \\
&\eta(t_1, h(t_1))=0,
\end{align*}
we can apply the strong maximum principle and the Hopf Lemma to $\eta$ over $\{(t,x)\in\mathbb{R}^2 | t\in (t_2, t_1),\ z(t)<x<h(t)\}$ to conclude that $\eta_x(t_1, h(t_1))>0$. However this implies 
\begin{align*}
h'(t_1)=-\mu u_x(t_1, h(t_1))<-\mu w_x(t_1, h(t_1))=-\mu w_x(t_1, \tilde{l}+ct_1+L_c)=-\mu V_c'(L_c)=c
\end{align*}
which contradicts $h'(t_1)-c\ge 0$. So we obtain $u(t_1, x)\ge w(t_1, x)$ for $x\in [ct_1+\tilde{l},\ ct_1+\tilde{l}+L_c]$. By applying the strong maximum principle to $\eta$ over $\{(t,x)\in\mathbb{R}^2 | t\in (t_2,\ t_1),\ 
ct+\tilde{l}<x<z(t)\}$ we have 
\begin{align*}
u(t_1, x)>w(t_1, x)\ \ {\rm on}\ \ x\in[ct_1+\tilde{l}, ct_1+\tilde{l}+L_c].
\end{align*}
If we set $\xi(t):=ct+\tilde{l}$, $\underline{h}(t):=ct+\tilde{l}+L_c$ and $\underline{u}(t,x)=w(t,x)$, then $(\underline{u}, \underline{h})$ and 
$\xi$ satisfy the conditions of comparison principle (Lemma \ref{comp}) 
with initial time replaced by $t_1$. Thus we have 
\begin{align*}
&u(t,x)>w(t,x)\ \ {\rm for}\ \ t>t_1,\ \ x\in[ct+\tilde{l},\ ct+\tilde{l}+L_c], \\ 
&h(t)>\underline{h}(t)=ct+\tilde{l}+L_c\ \ {\rm for}\ \ t>t_1.
\end{align*}
Therefore we have $h(t)-ct>\tilde{l}+L_c$ for $t>t_1$. This means that $\lim_{t\to\infty}(h(t)-ct)=\infty$. 
\end{proof}

\begin{lemma}\label{Hcunbounded} 
If $H_c(t)$ is unbounded, then we have $\lim_{t\to\infty}\frac{h(t)}{t}=c^*$. 
\end{lemma}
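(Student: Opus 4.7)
I split the claim into $\limsup_{t\to\infty}h(t)/t\le c^*$ and $\liminf_{t\to\infty}h(t)/t\ge c^*$. The upper bound is immediate from Proposition~\ref{upperestspeed}, since $h(t)\le c^*t+C_0$.

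For the lower bound, fix an arbitrary $c_1\in(c,c^*)$; I will show $\liminf h(t)/t\ge c_1$ and then let $c_1\nearrow c^*$. Using the compactly supported traveling wave $(L_{c_1},V_{c_1})$ from Proposition~\ref{compact-support-wave-ex}, I define, for parameters $t_0$ and $x_0$ to be chosen,
\begin{equation*}
\underline{\xi}(t):=x_0+c_1(t-t_0),\quad \underline{h}(t):=\underline{\xi}(t)+L_{c_1},\quad \underline{u}(t,x):=V_{c_1}(x-\underline{\xi}(t)).
\end{equation*}
A direct computation using the ODE for $V_{c_1}$ shows that $\underline{u}$ solves the PDE, vanishes at both endpoints of its support, and $\underline{h}'(t)=c_1=-\mu V_{c_1}'(L_{c_1})=-\mu\underline{u}_x(t,\underline{h}(t))$, so the sub-solution conditions hold with equality. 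Taking $x_0\ge ct_0$ ensures $\underline{\xi}(t)>ct$ for $t>t_0$ (since $c_1>c$), and hence $u(t,\underline{\xi}(t))>0=\underline{u}(t,\underline{\xi}(t))$ by interior positivity of $u$. Therefore, provided $\underline{h}(t_0)\le h(t_0)$ and $u(t_0,\cdot)\ge V_{c_1}(\cdot-x_0)$ on $[x_0,x_0+L_{c_1}]$, the lower-solution analogue of Lemma~\ref{comp} yields $h(t)\ge c_1(t-t_0)+x_0+L_{c_1}$ for all $t\ge t_0$, whence $\liminf h(t)/t\ge c_1$.

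The main work is to produce such $t_0$ and $x_0$. Since $\max V_{c_1}<1$, fix $\delta\in(0,1-\max V_{c_1})$; it is enough to find a time $t_0$ and an interval $I\subset(ct_0,h(t_0))$ of length $L_{c_1}$ on which $u(t_0,\cdot)\ge 1-\delta$. I would pass to the moving frame $v(t,y):=u(t,y+ct)$ on $(0,H_c(t))$, which solves $v_t=v_{yy}+cv_y+v(1-v)$ with zero Dirichlet data at $y=0$ and $y=H_c(t)$. Since $0<c<c^*<c_0=2$, Lemma~\ref{auxelliptic} with $C=c$ provides, for every large $l$, a positive solution $w_l$ of $w''+cw'+w(1-w)=0$ on $(-l,l)$ with $w_l(\pm l)=0$ and $w_l\to 1$ uniformly on compacts. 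Choose $l$ so large that $w_l\ge 1-\delta/2$ on an interval of length $L_{c_1}$ centred at $0$; by Lemma~\ref{Hcinfty}, pick $t_1$ with $H_c(t)\ge 3l$ for all $t\ge t_1$, so that the translate $w_l(\cdot-3l/2)$ (with support $[l/2,5l/2]$) sits inside $(0,H_c(t))$. One verifies that $\varepsilon w_l$ is a subsolution of the $v$-equation for every $\varepsilon\in(0,1]$ (thanks to $\varepsilon w_l(1-w_l)\le\varepsilon w_l(1-\varepsilon w_l)$) while $w_l$ is a supersolution. Combining a Hopf-type positive lower bound on $v(t_1,\cdot)$ over the compact interior interval $[l/2,5l/2]$ with a small-$\varepsilon$ initial ordering and monotone iteration between the sub- and super-solutions forces $v(t,\cdot)\to w_l(\cdot-3l/2)$ on $[l/2,5l/2]$ as $t\to\infty$. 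Translating back to $u$ produces the required $I$ for all sufficiently large $t_0$, and setting $x_0$ to the left endpoint of $I$ completes the argument.

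The principal obstacle lies in this monotone-iteration step: establishing the initial ordering $v(t_1,\cdot)\ge\varepsilon w_l(\cdot-3l/2)$ uniformly on the interior interval, and then driving the subsolution up to $w_l$. The ingredients (Lemma~\ref{auxelliptic}, the cooperative logistic structure, and maximum principles) are classical, but the execution must accommodate the moving left boundary of the $v$-equation and the vanishing of $w_l$ at the endpoints of its support.
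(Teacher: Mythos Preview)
Your proposal is correct and follows essentially the same route as the paper: the upper bound comes from Proposition~\ref{upperestspeed}, and the lower bound is obtained by planting the compactly supported wave $V_{c_1}$ as a subsolution once $u$ is shown to exceed $\max V_{c_1}$ on an interval of length $L_{c_1}$, with that last fact coming from Lemma~\ref{auxelliptic} applied in the moving frame.

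Two remarks on presentation. First, the obstacle you flag is not one. The paper simply works on the \emph{fixed} interval $[0,2\tilde{l}]$ (your $[l/2,5l/2]$): once $H_c(t)\ge 2\tilde{l}$ for all $t\ge T_1$, this interval stays inside $(0,H_c(t))$, so $v$ is strictly positive on its boundary and the standard comparison principle on a fixed cylinder applies---no accommodation of the moving boundary is needed. The paper then invokes the well-known fact that the solution of $\psi_t=\psi_{zz}+c\psi_z+\psi(1-\psi)$ on a fixed interval with zero Dirichlet data and any positive initial datum converges to the unique positive steady state $w_{\tilde l}$; your ``monotone iteration between sub- and super-solutions'' is one proof of that fact, but you can cite it directly. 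Second, your assertion that ``$v(t,\cdot)\to w_l(\cdot-3l/2)$'' is a slight overstatement: comparison only gives $\liminf_{t\to\infty} v(t,\cdot)\ge w_l(\cdot-3l/2)$ on the fixed interval, which is all you need.
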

\begin{proof} By Proposition \ref{upperestspeed}, we have
\begin{align*}
\limsup_{t\to\infty}\frac{h(t)}{t}\le c^*.
\end{align*}
Therefore it suffices to show that for any $\tilde{c}\in(c, c^*)$
\begin{align}\label{liminf}
\liminf_{t\to\infty}\frac{h(t)}{t}\ge \tilde{c}.
\end{align}
Although the proof is almost identical to the proof of Lemma 3.3 in \cite{DWZ}, we give the proof of this lemma for reader's convenience. 

Now fix $\tilde{c}\in(c,c^*)$ arbitrary. By Lemma \ref{compact-support-wave-ex}, there exists a unique solution pair $(L_{\tilde{c}}, V_{\tilde{c}})$ to problem \eqref{compact-support-wave}. 
It follows from the strong maximum principle that there exists $\varepsilon>0$ such that 
\begin{align*}
V_{\tilde{c}}(z)\le 1-\varepsilon\ \ {\rm for}\ \ z\in [0, L_{\tilde{c}}].
\end{align*}
By Lemma \ref{auxelliptic}, for any sufficiently large $l$, the problem
\begin{align*}
\left\{
\begin{array}{l}
w''+cw'+w(1-w)=0,\ -l<z<l, \\
w(-l)=w(l)=0,
\end{array}\right.
\end{align*}
has a unique positive solution $w_l$ and $w_l\to 1$ as $l\to\infty$ uniformly in any compact subset of $\mathbb{R}$. So, there exists $\tilde{l}>L_{\tilde{c}}$ such that 
\begin{align*}
w_{\tilde{l}}(z)>1-\frac{\varepsilon}{2}\ \ {\rm for}\ \ -\frac{L_{\tilde{c}}}{2}\le z\le \frac{L_{\tilde{c}}}{2}.
\end{align*}

Define $\tilde{\psi}(z)=w_{\tilde{l}}(z-\tilde{l})$. $\tilde{\psi}$ satisfies
\begin{align*}
\left\{
\begin{array}{l}
\tilde{\psi}''+c\tilde{\psi}'+\tilde{\psi}(1-\tilde{\psi})=0,\ 0<z<2\tilde{l}, \\
\tilde{\psi}(0)=\psi(2\tilde{l})=0.
\end{array}\right.
\end{align*}
and 
\begin{align}\label{psi}
\tilde{\psi}(z)>1-\frac{\varepsilon}{2}\ \ {\rm for}\ \ \tilde{l}-\frac{L_{\tilde{c}}}{2}\le z\le \tilde{l}+\frac{L_{\tilde{c}}}{2}.
\end{align}
Now we choose any $\psi_0\in C^1([0, 2\tilde{l}])$ satisfying $\psi_0(z)>0$ on $(0, 2\tilde{l})$ and $\psi_0(0)=\psi_0(2\tilde{l})=0$ and consider the following initial boundary 
value problem 
\begin{align*}
\left\{
\begin{array}{ll}
\psi_t=\psi_{zz}+c\psi_z+\psi(1-\psi), & t>0, 0<z<2\tilde{l}, \\
\psi(t, 0)=\psi(t, 2\tilde{l})=0, & t>0, \\
\psi(0, z)=\psi_0(z),& 0\le z\le 2\tilde{l}.
\end{array}\right.
\end{align*}
This problem has a unique positive solution $\psi(t,z; \psi_0)$ and it is well known that
\begin{align*}
\psi(t, z; \psi_0)\to\tilde{\psi}(z)\ \ {\rm as}\ t\to\infty\ \ {\rm uniformly\ on}\ [0, 2\tilde{l}].
\end{align*}
By \eqref{psi}, there exists $T=T(\psi_0)$ such that 
\begin{align}\label{psi1}
\psi(t, z; \psi_0)>1-\varepsilon\ \ {\rm for}\ t>T\ \ {\rm and}\ \ \tilde{l}-\frac{L_{\tilde{c}}}{2}<x<\tilde{l}+\frac{L_{\tilde{c}}}{2}.
\end{align}
Since $\lim_{t\to\infty}H_c(t)=\infty$ by Lemma \ref{Hcunbounded}, we can find $T_1>0$ such that $H_c(t)>2\tilde{l}$ for all $t\ge T_1$. Now we define $v(t,z)=u(t, z+ct)$. 
Then we have
\begin{align*}
\left\{
\begin{array}{ll}
v_t=v_{zz}+cv_z+v(1-v), & t>T_1,\ 0<z<H_c(t), \\
v(t,0)=v(t,H_c(t))=0, & t>T_1, \\
v(T_1, z)=u(T_1, z+cT_1), & 0\le z\le H_c(T_1).
\end{array}\right.
\end{align*}
Therefore, if we choose $\psi_0$ in \eqref{psi1} satisfying $0<\psi_0(z)\le u(T_1, z+cT_1)$ for $0\le z\le 2\tilde{l}$, then by using the standard comparison principle we obtain
\begin{align*}
\psi(t,z;\psi_0)<v(t+T_1, z)\ \ {\rm for}\ \ t>0\ \ {\rm and}\ \ 0<z<2\tilde{l}.
\end{align*}
By \eqref{psi} we have
\begin{align}\label{psi2}
v(T_1+T, z)>\psi(T, z;\psi_0)>1-\varepsilon\ \ {\rm for}\ \ \tilde{l}-\frac{L_{\tilde{c}}}{2}<z<\tilde{l}+\frac{L_{\tilde{c}}}{2}.
\end{align}
Denote $T_0=T+T_1$. \eqref{psi2} implies that 
\begin{align*}
u(T_0, x)>1-\varepsilon\ \ {\rm for}\ \ \tilde{l}+cT_0-\frac{L_{\tilde{c}}}{2}<x<\tilde{l}+cT_0+\frac{L_{\tilde{c}}}{2}.
\end{align*}
Now we define
\begin{align*}
&\underline{u}(t,x):=V_{\tilde{c}}(x-\underline{h}(t)), \\ 
&\underline{h}(t):=\tilde{c}(t-T_0)+\tilde{l}+cT_0+\frac{L_{\tilde{c}}}{2}, \\
&\xi(t):=\tilde{c}(t-T_0)+\tilde{l}+cT_0-\frac{L_{\tilde{c}}}{2}.
\end{align*}
It is easily seen that
\begin{align*}
\left\{
\begin{array}{ll}
\underline{u}_t=\underline{u}_{xx}+\underline{u}(1-\underline{u}), & t>T_0,\ \ \xi(t)<x<\underline{h}(t), \\
\underline{u}(t, \xi(t))=\underline{u}(t, \underline{h}(t))=0, & t>T_0, \\
\underline{h}'(t)=\tilde{c}=-\mu\underline{u}_x(t, \underline{h}(t)), & t>T_0,
\end{array}\right.
\end{align*}
and 
\begin{align*}
&ct<\xi(t)\ \ {\rm for}\ \ t\ge T_0, \\
&\underline{u}(T_0, x)=V_{\tilde{c}}\left(x-\tilde{l}-cT_0+\frac{L_{\tilde{c}}}{2}\right)<1-\varepsilon<u(T_0,x)\ \ {\rm for}\ \ \xi(T_0)\le x\le \underline{h}(T_0).
\end{align*}
By Lemma \ref{comp}, we obtain
\begin{align*}
&u(t,x)\ge \underline{u}(t,x)\ \ {\rm for}\ \ t\ge T_0,\ \ x\in[\xi(t), \underline{h}(t)], \\
&h(t)\ge \underline{h}(t)=\tilde{c}(t-T_0)+\tilde{l}+cT_0+\frac{L_{\tilde{c}}}{2}\ \ {\rm for}\ \ t\ge T_0.
\end{align*}
This implies \eqref{liminf}.
\end{proof}

\begin{proposition}\label{limit}
If $H_c(t)$ is bounded, then $\lim_{t\to\infty}H_c(t)$ exists.
\end{proposition}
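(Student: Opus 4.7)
The plan is to work in the moving-frame coordinate $z=x-ct$. Setting $v(t,z):=u(t,z+ct)$ on $z\in[0,H_c(t)]$, the equation becomes $v_t=v_{zz}+cv_z+v(1-v)$ with $v(t,0)=v(t,H_c(t))=0$, and the Stefan law takes the form $H_c'(t)=-\mu v_z(t,H_c(t))-c$. Stationary solutions of this moving-frame free boundary problem are pairs $(V,L)$ with $V''+cV'+V(1-V)=0$ on $(0,L)$, $V(0)=V(L)=0$, and $-\mu V'(L)=c$; by Proposition \ref{compact-support-wave-ex} the unique such pair is $(V_c,L_c)$ (the trivial $V\equiv 0$ is excluded by the Stefan identity $c\neq 0$). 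The strategy is to produce a Lyapunov functional and then pass to the limit along arbitrary sequences.

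The central step is to introduce
\[
\tilde{E}(t):=\int_0^{H_c(t)}e^{cz}\Bigl[\tfrac12 v_z^2-F(v)\Bigr]dz+\frac{c}{2\mu^2}e^{cH_c(t)},\qquad F(v):=\tfrac{v^2}{2}-\tfrac{v^3}{3}.
\]
Differentiating by the Leibniz rule, integrating by parts in $z$, and using $v_t(t,H_c(t))=-v_z(t,H_c(t))H_c'(t)$ (obtained by differentiating $v(t,H_c(t))=0$), one obtains
\[
\tilde{E}'(t)=-\frac{e^{cH_c(t)}}{2\mu^2}\bigl(H_c'(t)\bigr)^2\bigl(H_c'(t)+2c\bigr)-\int_0^{H_c(t)}e^{cz}v_t^2\,dz.
\]
Since $H_c'(t)+2c=h'(t)+c>0$ by Lemma \ref{hprime}, both terms are nonpositive, so $\tilde{E}'(t)\le 0$. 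Boundedness of $H_c$ (by hypothesis) together with Lemma \ref{ODE-estimate} bounds $\tilde{E}$ from below, so $\tilde{E}$ converges as $t\to\infty$, which yields $\int_0^\infty\!\!\int_0^{H_c(t)}e^{cz}v_t^2\,dz\,dt<\infty$ and $\int_0^\infty (H_c'(t))^2\,dt<\infty$.

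Before extracting a limit I need $\liminf_{t\to\infty}H_c(t)>0$: otherwise a sequence $t_n\to\infty$ with $H_c(t_n)\to 0$ would, by the quadratic barrier argument of Lemma \ref{fte1}, force $\|u(t_n,\cdot)\|_{C[ct_n,h(t_n)]}\to 0$, and then applying the construction behind Corollary \ref{suffvanish2} with initial time $t_n$ would give finite-time vanishing, contradicting global existence. With $H_c(t)\in[\delta,M]$ for positive $\delta,M$, parabolic Schauder estimates (after straightening the boundary, as in the proof of Proposition \ref{existence}) deliver uniform $C^{1+\alpha/2,2+\alpha}$ bounds on $v$ and uniform $C^{1+\alpha/2}$ bounds on $H_c$ on any time window of bounded length. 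For any sequence $t_n\to\infty$ I can then extract a subsequence along which $(v(t_n+\tau,\cdot),H_c(t_n+\tau))$ converges, uniformly on compact $\tau$-intervals, to an entire solution $(v_\infty(\tau,z),H_\infty(\tau))$ of the moving-frame free boundary problem.

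By Fatou's lemma applied to the two integrability estimates above, $(v_\infty)_\tau\equiv 0$ and $H_\infty'(\tau)\equiv 0$, so $(v_\infty(\tau,z),H_\infty(\tau))=(V_*(z),L_*)$ is stationary; passing to the limit in the Stefan condition moreover gives $-\mu V_*'(L_*)=c$. Proposition \ref{compact-support-wave-ex} therefore forces $(V_*,L_*)=(V_c,L_c)$, and since every subsequential limit of $H_c$ coincides, $\lim_{t\to\infty}H_c(t)=L_c$ exists. The main technical obstacle is the correct choice of the Lyapunov functional: the boundary correction $\frac{c}{2\mu^2}e^{cH_c(t)}$ is essential, since without it the moving-boundary contribution to $E'$ retains an indefinite $c^2 H_c'$ term and monotonicity fails; a secondary but nontrivial point is the uniform parabolic regularity on the moving domain needed to extract convergent subsequences.
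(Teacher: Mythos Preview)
Your argument is correct but follows a genuinely different route from the paper. The paper proves Proposition~\ref{limit} via a zero-number lemma (Lemma~\ref{sign}): for every $b\in(0,\infty)\setminus\{L_c\}$ the function $H_c(t)-b$ changes sign only finitely many times, which is shown by counting zeros of $v(t,z)-V_c(z-b+L_c)$ and using that each crossing of $b$ forces a strict drop in the zero count. Existence of $\lim_{t\to\infty}H_c(t)$ then follows immediately, and the identification of the limit as $L_c$ is deferred to a separate compactness argument (Proposition~\ref{HctoLc}).

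Your Lyapunov approach bypasses the zero-number machinery entirely. The weighted energy $\tilde E$ with the boundary correction $\frac{c}{2\mu^2}e^{cH_c}$ is exactly what is needed to absorb the Stefan flux, and your derivative formula is correct; together with $h'(t)>0$ it gives monotonicity and the dissipation integrals. The step $\liminf_{t\to\infty}H_c(t)>0$ is indeed available from the paper's Section~3 results (it is precisely the content of the proof of Proposition~\ref{iff}: a subsequence with $H_c(t_n)\to 0$ would force finite-time vanishing). Your compactness and Fatou argument then identifies every subsequential limit as the unique stationary pair $(V_c,L_c)$, so you obtain not only Proposition~\ref{limit} but also Proposition~\ref{HctoLc} in one stroke. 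The trade-off is that the paper's zero-number route is more in keeping with its overall methodology and yields additional structural information (finite sign changes of $H_c-b$), while your energy argument is more self-contained and quantitative, at the cost of being somewhat more sensitive to the specific Stefan structure through the boundary correction term.
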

The next lemma is sufficient to prove Proposition \ref{limit}.
\begin{lemma}\label{sign}
For any $b\in(0,\infty)\backslash\{L_c\}$, $H_c(t)-b$ changes its sign at most finitely many times. 
\end{lemma}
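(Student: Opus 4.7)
\medskip
\noindent\textbf{Proof plan.} The plan is to argue by contradiction via the Angenent zero-number lemma (Lemma \ref{zeronumber}). Suppose, contrary to the claim, that $H_c-b$ changes sign at times $t_1<t_2<\cdots$ with $t_n\to\infty$. Pass to the moving frame by setting $v(t,z):=u(t,z+ct)$, so that
\[
v_t=v_{zz}+cv_z+v(1-v),\qquad 0<z<H_c(t),
\]
with $v(t,0)=v(t,H_c(t))=0$ and $H_c'(t)=-\mu v_z(t,H_c(t))-c$. Let $L_{\min}:=\pi/\sqrt{1-c^2/4}$, which is finite since $c<c^*<c_0=2$.

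\smallskip
\noindent\emph{Case $b\le L_{\min}$.} On any time interval on which $H_c(t)\le L_{\min}$, the weighted substitution $w:=e^{cz/2}v$ converts the equation into $w_t=w_{zz}+(1-c^2/4)w-e^{-cz/2}w^2$, and the Poincar\'e inequality on $[0,H_c(t)]\subseteq[0,L_{\min}]$ yields $\int_0^{H_c}w_z^2\,dz\ge(1-c^2/4)\int_0^{H_c}w^2\,dz$. From this I would derive the energy monotonicity $\frac{d}{dt}\int_0^{H_c}e^{cz}v^2\,dz\le 0$, with strict inequality when $H_c(t)<L_{\min}$, together with a bootstrap to uniform decay of $v$ (using the $C^{2+\alpha}$ bound from the Schauder theory recalled in the proof of Lemma \ref{hprime}). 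The Stefan ODE $H_c'=-\mu v_z(H_c)-c$ then forces $H_c'$ to be eventually close to $-c<0$, so $H_c$ cannot simultaneously oscillate around $b\le L_{\min}$ and remain bounded away from $0$; combined with Proposition \ref{iff}, this rules out infinitely many sign changes in this case.

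\smallskip
\noindent\emph{Case $b>L_{\min}$ with $b\ne L_c$.} The phase-plane analysis underlying Proposition \ref{compact-support-wave-ex} yields a unique positive solution $V_b$ of $V_b''+cV_b'+V_b(1-V_b)=0$ on $(0,b)$ with $V_b(0)=V_b(b)=0$. Since $(L_c,V_c)$ is characterised among such solutions by the Stefan-like relation $-\mu V'(L)=c$, the defect $\delta:=c+\mu V_b'(b)$ is nonzero. Setting $\xi(t,z):=v(t,z)-V_b(z)$, a direct computation gives
\[
\xi_t=\xi_{zz}+c\xi_z+(1-v-V_b)\xi,
\]
a linear parabolic equation with bounded coefficients, to which Lemma \ref{zeronumber} is applicable. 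At each $t_n$ one has $\xi(t_n,0)=\xi(t_n,b)=0$, and the Stefan condition rewrites as $\xi_z(t_n,b)=-(H_c'(t_n)+\delta)/\mu$. Because $\delta\ne 0$, upward crossings ($H_c'(t_n)\ge 0$) satisfy $\xi_z(t_n,b)\le -\delta/\mu$ while downward crossings ($H_c'(t_n)\le 0$) satisfy $\xi_z(t_n,b)\ge -\delta/\mu$. Since the sign changes alternate between upward and downward, $\xi_z(t,b)$ must cross the value $-\delta/\mu$ on each intermediate interval. Combining this with the Hopf boundary-point lemma, I would show that $\xi(\cdot,\cdot)$ is forced to acquire an interior degenerate zero at some time between two consecutive sign changes; by Lemma \ref{zeronumber}(c) the interior zero count of $\xi$ then strictly drops at that time. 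Since this count is a nonnegative integer and is finite for each $t>0$, only finitely many such drops are possible---the desired contradiction.

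\smallskip
\noindent\emph{Main obstacle.} Lemma \ref{zeronumber} is phrased for a function on a fixed spatial interval with lateral boundary values that are either identically zero or uniformly nonzero. Here the natural domain of $\xi$ is $[0,\min\{H_c(t),b\}]$, which fluctuates with $t$, and the lateral value $\xi(t,b)=v(t,b)$ vanishes exactly at the crossing times $t_n$---violating the hypothesis of the lemma precisely where one wants to apply it. Circumventing this---by restricting the zero-number analysis to the union of sub-intervals $\{t:H_c(t)\ge b\}$, tracking how zeros migrate between these sub-intervals, and exploiting the separation afforded by $\delta\ne 0$ to prevent a zero from escaping through $z=b$ without leaving an interior degenerate zero---is the technically delicate core of the argument.
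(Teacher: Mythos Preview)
Your plan has a genuine gap in both cases, and the core difficulty you flag as the ``Main obstacle'' is exactly where the paper's proof makes a different (and decisive) choice of comparison function.

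\textbf{Case $b\le L_{\min}$.} The energy inequality you propose holds only on time intervals where $H_c(t)\le L_{\min}$. But infinitely many sign changes of $H_c-b$ do not force $H_c(t)\le L_{\min}$ for all large $t$: between crossings $H_c$ can rise well above $L_{\min}$ (even above $L_c$), and on those excursions your weighted $L^2$ quantity can grow. So neither the monotonicity of $\int e^{cz}v^2$, nor the claimed uniform decay of $v$, nor the conclusion $H_c'\to -c$ is justified. (One can dispose of the unbounded--$H_c$ situation via Lemma~\ref{Hcinfty}, but boundedness alone does not salvage the energy argument.)

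\textbf{Case $b>L_{\min}$.} Your comparison function $V_b$ (the Dirichlet steady state on $[0,b]$) has the defect $\delta=c+\mu V_b'(b)\neq 0$. This gives, at a crossing time $t_n$, only the one--sided information $\xi_z(t_n,b)\le -\delta/\mu$ (upward) or $\xi_z(t_n,b)\ge -\delta/\mu$ (downward). For exactly one type of crossing this pins down the sign of $\xi_z(t_n,b)$ and lets the Hopf lemma force the rightmost zero to hit $z=b$; for the \emph{other} type the sign of $\xi_z(t_n,b)$ is genuinely undetermined (it depends on how large $|H_c'(t_n)|$ is relative to $|\delta|$). At those crossings you cannot rule out that a new interior zero is created when the lateral boundary value of $\xi$ flips sign, so the zero count need not drop over an up--down pair. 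Your ``restrict to sub-intervals and track zero migration'' sketch does not close this gap. A second complication is that with your choice of $V_b$ one has $\xi(t,0)\equiv 0$, so the left boundary also sits on a permanent zero of $\xi$; this is compatible with Lemma~\ref{zeronumber0}, but it adds another place where degenerate zeros can occur.

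\textbf{What the paper does instead.} The paper does \emph{not} use the steady state $V_b$ on $[0,b]$, and there is no case split at $L_{\min}$. It compares $v(t,z)$ with the \emph{shifted} compactly supported wave $V_c(z-b+L_c)$, whose right endpoint sits exactly at $z=b$ and which satisfies the Stefan relation $-\mu V_c'(L_c)=c$ identically. Setting $\eta(t,z)=v(t,z)-V_c(z-b+L_c)$ on $[0,\min\{H_c(t),b\}]$ (for $b<L_c$) or on $[b-L_c,\min\{H_c(t),b\}]$ (for $b>L_c$), one gets a \emph{nonzero} lateral value on the left, and at any crossing $H_c(t_*)=b$ the identity
\[
\eta_z(t_*,b)=v_z(t_*,b)-V_c'(L_c)=-\tfrac{1}{\mu}\,H_c'(t_*)
\]
holds with no defect term. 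Thus the sign of $\eta_z(t_*,b)$ is exactly opposite to that of $H_c'(t_*)$, and the Hopf lemma argument goes through at \emph{every} crossing: the rightmost zero is forced to $b$, and the paper then shows (via a short comparison with the lower/upper solution $V_c(x-ct-b+L_c)$) that this boundary zero disappears just after the crossing, so the zero count of $\eta$ strictly drops. Since that count is finite for any positive time, only finitely many crossings are possible.

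In short, the missing idea in your plan is to use the Stefan--compatible wave $V_c$ (translated so its right end is at $b$) rather than the Dirichlet steady state $V_b$. This single choice removes the $\delta$--ambiguity at the right boundary, makes the left boundary value nonzero, and eliminates the need for a separate small--$b$ case.
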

\begin{proof}
Define $v(t,z):=u(t, z+ct)$. It is clear that $(v, H_c)$ satisfies
\begin{align*}
\left\{
\begin{array}{ll}
v_t=v_{zz}+cv_z+v(1-v), & t>0,\ 0<z<H_c(t), \\
v(t,0)=v(t, H_c(t))=0, & t>0, \\
H_c'(t)=-\mu v_x(t, H_c(t))-c, & t>0, \\
H_c(0)=h_0, v(0,z)=u_0(z),& 0\le z\le h_0.
\end{array}\right.
\end{align*}
As in the proof of Lemma 3.7 in \cite{KM}, we investigate the zero number of the function $v(t, z)-V_c(z-b+L_c)$ for any $b\in (0, \infty)\backslash \{L_c\}$.

{\bf Step 1.} For the case $0<b<L_c$.

{\bf Case 1.} We first consider the case where $H_c(0)=h_0<b$. If $u_0(x)\le V_c(x-b+L_c)$, then by the comparison principle (Lemma \ref{comp}), we have $u(t,x)\le V_c(x-ct-b+L_c)$ and 
$h(t)\le b+ct$ for $t>0$ and $x\in [ct, h(t)]$. Thus $H_c(t)\le b$ for $t\ge 0$. By applying the strong maximum principle for $u(t,x)-V_c(x-ct-b+L_c)$ over $\{(t,x)\in\mathbb{R}^2\ |\ t>0, x\in [ct, h(t)]\}$, 
we have $u(t,x)<V_c(x-ct-b+L_c)$ in the region. Furthermore, we can show that $H_c(t)<b$ for $t>0$. Otherwise $H_c(t^*)=b$ holds for some $t^*>0$, and then we have $H_c'(t^*)\ge 0$. However, 
since $h(t^*)-ct^*=b$ and $u(t^*, h(t^*))-V_c(u(t^*)-ct^*-b+L_c)=u(t^*, h(t^*))-V_c(L_c)=0$, so we can apply the Hopf Lemma to obtain $u_x(t^*, h(t^*))>V_c'(L_c))$. But this means that 
$h'(t^*)=-\mu u_x(t^*, h(t^*))<-\mu V_c'(L_c)=c$ which contradicts $H'(t^*)\ge 0$. Hence we have $H_c(t)<b$ for $t>0$.

Now we assume that $u_0(X)>V_c(X-b+L_c)$ for some $X\in(0,h_0)$. Define 
\begin{align*}
\eta(t,z):=v(t,z)-V_c(z-b+L_c).
\end{align*}
The function $\eta$ satisfies the following linear parabolic equation:
\begin{align*}
\eta_t=\eta_{zz}+cv_z+m(t,z)\eta,\ \ t>0,\ \ 0<z<k(t),
\end{align*}
where $k(t):=\min\{H_c(t), b\}$ and $m(t,z)$ is certain bounded function. Now we note that $\eta(0,X)>0$ and $\eta(0, h_0)<0$. Suppose that there exists $t_1>0$ such that 
\begin{align*}
H_c(t)<b\ \ {\rm for}\ \ t\in[0, t_1)\ \ {\rm and}\ \ H(t_1)=b.
\end{align*}
This implies that $H'(t_1)\ge 0$. Since $\eta(t, 0)<0$ for $t>0$ and $\eta(t, k(t))<0$ for $t\in(0, t_1)$, we can apply the result of the zero number argument from Lemma \ref{zeronumber} for $t\in(0,t_1)$. 
Let $\mathcal{Z}(t)$ be the number of zeros of the function $\eta(t,\cdot)$ in the closed interval $[0, k(t)]$. By Lemma \ref{zeronumber}, we have $\mathcal{Z}(t)<\infty$ for each $t\in(0,t_1)$. Moreover , 
$\eta(t, \cdot)$ can have degenerate zeros at most finitely many values of $t$ in $(0, t_1)$. Thus, we can find $\tau_0\in (0, t_1)$ such that for $t\in(\tau_0, t_1)$, $\eta(t, \cdot)$ has only nondegenerate zeros 
$\{z_j(t)\}_{j=1}^m$ with
\begin{align*}
0<z_1(t)<\cdots<z_m(t)<k(t).
\end{align*}
We note that $z_j(\cdot)\in C^1(\tau_0, t_1)$ for $j=1,\cdots, m$. As in the proof of Lemma \ref{Hcinfty}, we can show that $z_j^*:=\lim_{t\nearrow t_j}z_j(t)$ exists for each $j=1,\cdots, m$. 

{\bf Claim 1.} $z_m^*=b$.

Assume that $z_m^*<b$. Then, by applying the strong maximum principle to the function $\eta$ over $\{(t,z)| \tau_0<t\le t_1,\ z_m(t)<z<k(t)\}$, we obtain $\eta(t,z)<0$. Since $\eta(t, k(t_1))=0$, 
we can use the Hopf Lemma to deduce that $\eta_z(t_1, k(t_1))=\eta_z(t_1, b)>0$. This implies $H_c'(t_1)=-\mu v_z(t_1, b)-c<-\mu V_c'(L_c)-c=0$, which contradicts $H'(t_1)\ge 0$. Hence $z_m^*=b$. 

{\bf Claim 2.} If $z_j^*<z_{j+1}^*$, then $\eta(t_1, z)\ne 0$ for $z\in(z_j^*, z_{j+1}^*)$. This follows by applying the strong maximum principle to the function $\eta$ over 
$\{(t, z)|\tau_0<t\le t_1,\ z_j(t)<z<z_{j+1}(t)\}$. 

From Claim 1 and 2, we can see that $n:=\mathcal{Z}(t_1)\le m=\mathcal{Z}(t)$ for $t\in (\tau_0, t_1)$. Let $0<\hat{z}_1<\cdots<\hat{z}_n=k(t_1)=b$ denote all the zeros of $\eta(t_1, \cdot)$ 
in $[0, k(t_1)]$.

Next we will show that there exists $\varepsilon>0$ such that $\mathcal{Z}(t_1)>\mathcal{Z}(t)$ for $t\in(t_1, t_1+\varepsilon)$.

{\bf Claim 3.} The zero $\hat{z}_n(=b)$ of $\eta(t_1, \cdot)$ disappears just after $t_1$.

Take $\tilde{b}<b$ such that $\eta(t_1, z)\ne 0$ for $z\in [\tilde{b}, b)$. For definiteness, we assume that $\eta(t_1, z)>0$ for $z\in [\tilde{b}, b)$. By continuity, we can choose 
a sufficiently small $\varepsilon>0$ such that $\eta(t,\tilde{b})>0$ for $t\in [t_1, t_1+\varepsilon]$. Let us define
\begin{align*}
&\xi(t):=\tilde{b}+ct,\ \underline{h}(t):=b+ct, \\
&\underline{u}(t,x):=V_c(x-ct-b+L_c),
\end{align*}
for $t\in [t_1, t_1+\varepsilon]$, $x\in[\tilde{b}+ct, b+ct]$. Then, it is easy to see that $(\underline{u}, \underline{h})$ is a lower solution of \eqref{fbp} for such $t$ and $x$. By the comparison 
principle (Lemma \ref{comp}), we obtain $u(t,x)\ge V_c(x-ct-b+L_c)$ and $h(t)\ge b+ct$ for $t\in[ t_1, t_1+\varepsilon]$ and $x\in[\tilde{b}+ct, b+ct]$, so we have $H_c(t)\ge b$ for 
$t\in[t_1, t_1+\varepsilon]$. By the strong maximum principle, we obtain $u(t,x)>V_c(x-ct-b+L_c)$ for $t\in (t_1, t_1+\varepsilon]$ and $x\in[\tilde{b}+ct, b+ct)$ or $\eta(t,z)=v(t,z)-V_c(z-b+L_c)>0$ 
for $t\in (t_1, t_1+\varepsilon]$ and $z\in[\tilde{b}, b)$. We can also show that $H_c(t)>b$ for $t\in (t_1, t_1+\varepsilon]$. In fact, if $H_c(\tilde{t})=b$ for some $\tilde{t}\in(t_1, t_1+\varepsilon]$, then 
we obtain 
$H_c'(\tilde{t})\le 0$ and $\eta(\tilde{t}, b)=0$. So we can apply the Hopf Lemma to deduce that $\eta_z(\tilde{t}, b)<0$. This leads to $H_c'(\tilde{t})=-\mu v_z(\tilde{t}, H_c(\tilde{t}))-c>0$, which 
is a contradiction. Thus $H_c(t)>b$ for $t\in (t_1, t_1+\varepsilon]$. This means that the zero $\hat{z}_n(=b)$ disappears just after $t_1$. Even in the case where $\eta(t_1, z)<0$ for 
$z\in[\tilde{b}, b)$, we can show that $v(t,z)<V_c(z-b+L_c)$ and $H_c(t)<b$ for $t\in (t_1, t_1+\varepsilon]$ and $z\in [\tilde{b}, H_c(t))$, and the zero $\hat{z}_n(=b)$ disappears just after $t_1$.

On the other hand, since we can see that $\eta(t, \tilde{b})\ne 0$ for $t\in [t_1-\varepsilon, t_1+\varepsilon]$ by shrinking $\varepsilon$, the number of zeros of $\eta(t, \cdot)$ in $[0, \tilde{b}]$
 is nonincreasing on $[t_1-\varepsilon, t_1+\varepsilon]$. Therefore, we can deduce that $\mathcal{Z}(s)\ge\mathcal{Z}(t_1)>\mathcal{Z}(t)$ for $s\in (t_1-\varepsilon, t_1)$ and $t\in(t_1, t_1+\varepsilon)$.
 
 {\bf Case 2.} Next, we consider the case where $b<h_0$. Define $\eta$, $k(t)$ and $\mathcal{Z}(t)$ as given in Case 1. Since $\eta(0,0)=u_0(0)-V_c(-b+L_c)<0$ and $\eta(0, b)=u_0(b)-V_c(L_c)>0$, 
 $\eta(0,z)$ has at least one zero on $[0,b]$. If $H_c(t)>b$ for $t>0$, then nothing more is required. Suppose that there exists $t_2>0$ such that 
 \begin{align*}
 b<H_c(t)\ \ {\rm for}\ \ t\in [0, t_2)\ \ {\rm and}\ \ H_c(t_2)=b.
 \end{align*}
 Then, we have $H_c'(t_2)\le 0$. By Lemma \ref{zeronumber}, we have $\mathcal{Z}(t)<\infty$ for each $t\in (0, t_2)$. In a similar way to Case 1, we can show that $\mathcal{Z}(t)$ decreases strictly when $t$ goes across $t_2$.
 
 {\bf Case 3.} We consider the case where $h_0=b$. If $H_c(t)\equiv h_0$, then nothing more is required. Assume that $H_c(\tau_2)>h_0$ or $H_c(\tau_2)<h_0$ for some $\tau_2>0$. Then, we 
 can regard $\tau_2$ as an initial time and obtain the same conclusion.
 
 Summarizing the arguments in Cases 1 -- 3, we can conclude that when $H_c(t)$ reaches $b$, the number of zeros of $\eta(t, \cdot)$ decreases strictly. Since $\mathcal{Z}(t_0)<\infty$ for $t_0$ just after the initial time, unless
  $H_c(t)\equiv b$, we can conclude that $H_c(t)-b$ changes sign at most finitely many times after $t_0$. 
  
{\bf Step 2.} For the case $L_c<b$.

Define $\eta$ as in Step 1. By considering $\eta$ over the region $\{(t, z)| t>0, b-L_c<z<k(t)\}$, we can repeat the argument in Step 1 and obtain the same conclusion.

Now we have complete the proof of Lemma \ref{sign}.
\end{proof}

\begin{proof}[Proof of Proposition \ref{limit}]
Since $H_{c}(t)>0$ for all $t>0$ and $H_{c}(t)$ is bounded, there exist $\{t_{n}\}$, $\{\tilde{t}_{n}\}\subset\mathbb{R}$ with $\lim_{n\to\infty}t_{n}=\lim_{n\to\infty}\tilde{t}_{n}=\infty$ such that
\begin{align}\label{seq0}
0\le\liminf_{n\to\infty}H_{c}(t_n)=\lim_{t\to\infty}H_{c}(t)\le \limsup_{t\to\infty}H_{c}(t)=\lim_{n\to\infty}H_{c}(\tilde{t}_{n})<\infty
\end{align}
Suppose that $\underline{H}:=\liminf_{t\to\infty}H_{c}(t)<\limsup_{t\to\infty}H_{c}(t)=:\overline{H}$. Then \eqref{seq0} means that for $b\in (\underline{H}, \overline{H})\cap\{(0, \infty)\backslash\{L_{c}\}\}$, $H_{c}(t)-b$ changes its sign infinitely many times. But this contradict the conclusion of Lemma \ref{sign}. 
Now we have completed the proof of Proposition \ref{limit}.
\end{proof}

\begin{proposition}\label{HctoLc}
Suppose that $H_c(t)$ is bounded. Then we have $\lim_{t\to\infty}H_c(t)=L_c$.
\end{proposition}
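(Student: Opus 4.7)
The plan is to identify $H_\infty := \lim_{t \to \infty} H_c(t)$, whose existence is granted by Proposition \ref{limit}, with $L_c$ through an $\omega$-limit argument in the moving frame. First I would rule out $H_\infty = 0$: otherwise, the argument behind Corollary \ref{suffvanish2} and Proposition \ref{iff} would force $T^{\ast} < \infty$, contradicting the global-existence hypothesis. Hence $H_\infty > 0$, and the task reduces to showing $H_\infty = L_c$.

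Introduce the frame $z := x - ct$ and $v(t,z) := u(t, z + ct)$, so that
\begin{align*}
v_t = v_{zz} + c v_z + v(1-v),\quad v(t,0) = v(t, H_c(t)) = 0, \quad H_c'(t) = -\mu v_z(t, H_c(t)) - c.
\end{align*}
After the standard flattening change of variables $y := z/H_c(t)$, the equation becomes uniformly parabolic on the fixed cylinder $(0,1)$, so Lemma \ref{hprime} combined with Schauder theory yields uniform bounds on $v$ in $C^{1+\alpha/2,\,2+\alpha}$ up to $z = H_c(t)$ and on $H_c$ in $C^{1+\alpha/2}$. For any sequence $t_n \to \infty$, one may extract a subsequence (not relabeled) along which $v(t_n + \tau, z) \to V(\tau, z)$ in $C^{1,2}_{\mathrm{loc}}$ and $H_c(t_n + \cdot) \to H_\infty$ in $C^1_{\mathrm{loc}}$. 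The limit profile $V$ satisfies
\begin{align*}
V_\tau = V_{zz} + cV_z + V(1-V)\ \text{on}\ \mathbb{R} \times (0, H_\infty),\qquad V(\tau, 0) = V(\tau, H_\infty) = 0.
\end{align*}

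To transfer the Stefan condition to the limit, integrate $H_c'$ over a fixed window: for any $\tau_1 < \tau_2$,
\begin{align*}
H_c(t_n + \tau_2) - H_c(t_n + \tau_1) = \int_{\tau_1}^{\tau_2} H_c'(t_n + \tau)\, d\tau \longrightarrow 0,
\end{align*}
while the integrand $-\mu v_z(t_n+\tau, H_c(t_n+\tau)) - c$ converges locally uniformly to $-\mu V_z(\tau, H_\infty) - c$. Dominated convergence forces $-\mu V_z(\tau, H_\infty) = c$ for every $\tau \in \mathbb{R}$. Setting $W := V_\tau$, the $\tau$-constancy of $V(\tau, H_\infty)$ and of $V_z(\tau, H_\infty)$ yields $W(\tau, H_\infty) = W_z(\tau, H_\infty) = 0$. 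Since $W$ satisfies the linearised equation $W_\tau = W_{zz} + cW_z + (1-2V)W$ with bounded coefficients, extending $W$ by $0$ across $z = H_\infty$ produces a $C^1$ function (no distributional defects appear, as both trace and co-normal trace vanish) that still solves a linear parabolic PDE on a larger cylinder; unique continuation then forces $W \equiv 0$. Consequently $V$ is $\tau$-independent and solves
\begin{align*}
V'' + cV' + V(1 - V) = 0\ \text{on}\ (0, H_\infty),\quad V(0) = V(H_\infty) = 0,\quad -\mu V'(H_\infty) = c,
\end{align*}
so the uniqueness part of Proposition \ref{compact-support-wave-ex} gives $(H_\infty, V) = (L_c, V_c)$.

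The main technical hurdle I anticipate is the rigorous limit-passage in the Stefan condition, requiring parabolic estimates valid up to the moving boundary (hence the flattening change of variables), together with the unique-continuation step for $W$ in which the Dirichlet and Neumann traces at $z = H_\infty$ must be exploited simultaneously.
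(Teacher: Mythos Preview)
Your proposal is correct and follows the same compactness-in-a-moving-frame strategy as the paper, but the endgame is genuinely different. After extracting the limit profile $\hat w$ (your $V$) on $\mathbb R\times[-H_c^*,0]$ together with the limiting Stefan data $\hat w(t,0)=0$, $\hat w_y(t,0)=-c/\mu$, the paper does \emph{not} try to show $\hat w$ is $\tau$-independent. Instead it argues by contradiction: assuming $H_c^*<L_c$ (resp.\ $H_c^*>L_c$), it compares $\hat w$ with the translated profile $V_c(\cdot+L_c)$ and applies the zero-number lemma (Lemma~\ref{zeronumber0}) to $\eta=\hat w-V_c(\cdot+L_c)$ on $[-H_c^*,0]$ (resp.\ $[-L_c,0]$). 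Because $\eta(t,0)=0$ and $\eta_y(t,0)=0$ for every $t$, the function $\eta(t,\cdot)$ has a degenerate zero at $y=0$ for all $t$, which forces the zero number to drop infinitely often, contradicting its finiteness.

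Your route---differentiate in $\tau$, exploit $W(\tau,H_\infty)=W_z(\tau,H_\infty)=0$, extend by zero, and invoke lateral unique continuation to get $V_\tau\equiv 0$, then appeal to the uniqueness in Proposition~\ref{compact-support-wave-ex}---is cleaner conceptually, since it identifies the limit directly rather than excluding the two bad cases separately. The cost is that the unique-continuation step, while true in this one-dimensional setting, is itself most easily justified by the same Angenent zero-number machinery the paper uses (the extended $\tilde W$ would have an interval of zeros for every $\tau$, contradicting finiteness). So the two arguments are close cousins; the paper's version avoids the extension-by-zero step and the need to verify positivity of $V$ before invoking Proposition~\ref{compact-support-wave-ex}, at the price of splitting into two cases.
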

\begin{proof}

Let $H_{c}^*:=\lim_{t\to\infty}H_{c}(t)$.

{\bf Step 1.} Suppose that $H_{c}^{*}<L_{c}$. Define
\begin{align*}
v(t,z):=u(t,z+ct),\ w(t,y):=u(t, y+h(t)).
\end{align*}
It is clear that $v$ and $w$ satisfy
\begin{align}
&\left\{
\begin{array}{ll}
v_{t}=v_{zz}+cv_{z}+v(1-v), &t>0,\ 0<z<H_{c}(t), \\
v(t,0)=0, & t>0,
\end{array}\right. \label{veq1} \\
&\left\{
\begin{array}{ll}
w_{t}=w_{yy}+(c+H_{c}'(t))w_{y}+w(1-w),& t>0,\ -H_{c}(t)<y<0, \\
w(t, -H_{c}(t))=w(t,0)=0,& t>0, \\
H_{c}'(t)=-\mu w_{y}(t,0)-c,& t>0.
\end{array}\right. \label{weq1}
\end{align}
Now we take any sequence $\{t_n\}\subset\mathbb{R}$ satisfying $\lim_{n\to\infty}t_{n}=\infty$ and define
\begin{align*}
H_{c,n}(t);=H_{c}(t+t_{n}),\ v_{n}(t, z):=v(t+t_{n}, z),\ w_{n}(t, y):=w(t+t_{n}, y).
\end{align*}
From \eqref{veq1}, \eqref{weq1}, we have
\begin{align}
&\left\{
\begin{array}{ll}
\displaystyle\frac{\partial v_{n}}{\partial t}=\frac{\partial^{2}v_{n}}{\partial t^{2}}+c\frac{\partial v_{n}}{\partial z}+v(1-v), &t>0, 0<z<H_{c,n}(t), \\
v_{n}(t,0)=0, & t>0,
\end{array}\right. \label{veq2} \\
&\left\{
\begin{array}{ll}
\displaystyle\frac{\partial w_{n}}{\partial t}=\frac{\partial^{2}w_{n}}{\partial y^{2}}+(c+H_{c,n}'(t))\frac{\partial w_{n}}{\partial y}+w(1-w),& t>-t_{n}, -H_{c,n}(t)<y<0, \\
w_{n}(t, -H_{c,n}(t))=w_{n}(t,0)=0,& t>-t_{n}, \\
\displaystyle H_{c,n}'(t)=-\mu \frac{\partial w_{n}}{\partial y}(t,0)-c,& t>-t_{n}.
\end{array}\right. \label{weq2}
\end{align}
We first examine \eqref{weq2}. Since $\|w_n\|_{\infty}$ and $\|H_{c,n}'\|_{\infty}$ are bounded, we can apply the parabolic $L^{p}$ estimates, Sobolev embedding theorem and the Schauder estimates (see \cite{Li} and \cite{LSU}) to deduce that $\{w_{n}\}$ is bounded in 
$C^{1+\frac{\alpha}{2}, 2+\alpha}([-R,R]\times[-H_{c}^{*}+\frac{1}{R}, 0])$ for any $R>0$ and $0<\alpha<1$. Hence $H_{c,n}'$ is uniformly bounded in $C^{\alpha}(I)$ for any bounded interval $I\subset\mathbb{R}$, and by passing to a subsequence, which is still denoted by $\{t_{n}\}$, 
we have
\begin{align*}
H_{c,n}'\to \tilde{H}_{c}\ \ {\rm in}\ \ C^{\alpha'}_{\rm loc}(\mathbb{R})\ \ {\rm as}\ \ n\to\infty
\end{align*}
for some function $\tilde{H}$ and any $\alpha'\in(0,\alpha/2)$. By passing to a further subsequence, we have
\begin{align*}
w_{n}\to \hat{w}\ \ {\rm in}\ \ C^{1+\frac{\alpha'}{2}, 2+\alpha'}_{\rm loc}(\mathbb{R}\times(-H_{c}^{*}, 0])\ \ {\rm as}\ \ n\to\infty
\end{align*}
and $\hat{w}$ satisfies
\begin{align*}
\left\{
\begin{array}{ll}
\hat{w}_{t}=\hat{w}_{yy}+(\tilde{H}_{c}+c)\hat{w}_{y}+\hat{w}(1-\hat{w}), & t\in\mathbb{R},\ -H_{c}^{*}<y<0, \\
\hat{w}(t,0)=0, & t\in\mathbb{R}, \\
\tilde{H}_{c}(t)=-\mu\hat{w}_{y}(t,0)-c, & t\in\mathbb{R}.
\end{array}\right. 
\end{align*}
Since
\begin{align*}
H_{n,c}(t)=H_{n,c}(0)+\int_{0}^{t} H_{n,c}'(s)ds,
\end{align*}
$H_{n,c}(t)=H_{c}(t+t_{n})$, $\lim_{t\to\infty}H_{c}(t)=H_{c}^{*}$ and $\lim_{n\to\infty}H_{c,n}'(t)=\tilde{H}(t)$ in $C^{\alpha'}_{\rm loc}(\mathbb{R})$, by letting $n\to\infty$ in the above identity, we find that
\begin{align*}
\int_{0}^{t} \tilde{H}(s)ds=0\ \ {\rm for\ all}\ \ t\in\mathbb{R},
\end{align*}
that is, $\tilde{H}(t)\equiv 0$. Therefore, we obtain
\begin{align*}
\left\{
\begin{array}{ll}
\hat{w}_{t}=\hat{w}_{yy}+c\hat{w}_{y}+\hat{w}(1-\hat{w}), &t\in\mathbb{R}, -H_{c}^{*}<y<0, \\
\hat{w}(t,0)=0, & t\in\mathbb{R}, \\
\displaystyle\frac{\partial\hat{w}}{\partial y}(t,0)=-\frac{c}{\mu}, & t\in\mathbb{R}.
\end{array}\right.
\end{align*}

Next we examine $v_{n}$. For any small $\varepsilon>0$, we consider \eqref{veq2} over
\begin{align*}
\Omega_{\varepsilon}:=\{(t,z) : t\in [-\varepsilon^{-1}, \varepsilon^{-1}], z\in[0, H_{c}^{*}-\varepsilon]\}.
\end{align*}
Applying the parabolic $L^{p}$ estimates, the Sobolev embedding theorem and the Schauder estimates, along a subsequence, we can show that $v_{n}\to\hat{v}$ in 
$C^{1+\alpha'/2, 2+\alpha'}(\Omega_{\varepsilon})$ as $n\to\infty$ for $\alpha'\in(0,1)$ and $\hat{v}$ satisfies
\begin{align*}
\hat{v}_{t}=\hat{v}_{zz}+c\hat{v}_{z}+\hat{v}(1-\hat{v})\ \ {\rm in}\ \ \Omega_{\varepsilon}.
\end{align*}
Since $\varepsilon>0$ is arbitrary, by using diagonal argument along a further subsequence, we obtain
\begin{align*}
v_{n}\to\hat{v}\ \ {\rm in}\ \ C^{1+\frac{\alpha'}{2}, 2+\alpha'}(\Omega_{0})
\end{align*}
where $\Omega_{0}:=\{(t,z): t\in\mathbb{R},\ z\in[0, H_{c}^{*})\}$. From the relation $v_{n}(t,z)=w_{n}(t, z-H_{c,n}(t))$, we have $\hat{v}(t,z)=
\hat{w}(t, z-H_{c}^{*})$ for $0<z<H_{c}^{*}$. Since $\hat{v}(t,0)=0$, we can easily see that
\begin{align*}
\lim_{y\to -H_{c}^{*}}\hat{w}(t,y)=\lim_{y\to -H_{c}^{*}}\hat{v}(t, y+H_{c}^{*})=0.
\end{align*}
So we have $\hat{w}\in C^{1,2}(\mathbb{R}\times [-H_{c}^{*}, 0])$ and
\begin{align}\label{weq4}
\left\{
\begin{array}{ll}
\hat{w}_{t}=\hat{w}_{yy}+c\hat{w}_{y}+\hat{w}(1-\hat{w}), & t\in\mathbb{R}, -H_{c}^{*}<y<0, \\
\hat{w}(t, -H_{c}^{*})=\hat{w}(t,0)=0, & t\in\mathbb{R}, \\
\displaystyle\frac{\partial\hat{w}}{\partial y}(t,0)=-\frac{c}{\mu}.
\end{array}\right.
\end{align}
By the strong maximum principle, we also have $\hat{w}(t, y)>0$ for $t\in\mathbb{R}$ and $y\in (-H_{c}^{*}, 0)$. 

Now we define $\eta(t,y)=\hat{w}(t,y)-V_{c}(y+L_{c})$. Clearly $\eta$ satisfies
\begin{align*}
&\eta_{t}=\eta_{yy}+c\eta_{y}+m(t,y)\eta, t\in\mathbb{R}, y\in[-H_{c}^{*}, 0], \\
&\eta(t, -H_{c}^{*})<0,\ \eta(t,0)=0.
\end{align*}
Therefore we can use the zero number result of Angenent \cite{A}(see Lemma \ref{zeronumber0}) to conclude that, for any $t\in\mathbb{R}$, the number 
of zeros of $\eta(t, \cdot)$ in $[-H_{c}^{*}, 0]$, say $\mathcal{Z}_{[-H_{c}^{*}, 0]}(t)$, is finite and nonincreasing in $t$, and if $\eta(t_{0}, \cdot)$ has a degenerate zero in $[-H_{c}^{*},0]$ for some $t_{0}\in\mathbb{R}$, then for any $s<t_{0}<t$ we have 
\begin{align*}
\mathcal{Z}_{[-H_{c}^{*},0]}(t)\le \mathcal{Z}_{[-H_{c}^{*}, 0]}(s)-1.
\end{align*}
Since $\mathcal{Z}_{[-H_{c}^{*}, 0]}(t)<\infty$, it follows that there may be at most finitely many value of $t$ such that $\eta(t, \cdot)$ has a degenerate zero. However $\eta$ satisfies
\begin{align*}
\eta_{y}(t,0)=\hat{w}_{y}(t, 0)-V_{c}'(L_{c})=0,
\end{align*}
so $\eta(t, \cdot)$ has  degenerate zero $y=0$ for any $t\in\mathbb{R}$. This is contradiction. Thus we have $L_{c}\le H_{c}^{*}$.

{\bf Step 2.} Suppose that $L_{c}<H_{c}^{*}$. Arguing as in Step 1, we obtain $\hat{w}$ satisfying \eqref{weq4} and $\hat{w}(t,y)>0$ for 
$t\in\mathbb{R}$ and $y\in (-H_{c}^{*}, 0)$. Noting that $L_{c}<H_{c}^{*}$, we consider $\eta(t,y)$ on $\{(t,y) : t\in\mathbb{R}, y\in [-L_{c}, 0]\}$. 
Then we have $\eta(t, -L_{c})>0$ and we can obtain a contradiction by similar zero number argument to Step 1. The proof is complete.
\end{proof}

\subsection{The case of spreading}

In this subsection, we investigate the spreading phenomena. We first give a sufficient condition for spreading.

\begin{lemma}\label{suffforspreading}
Suppose that
\begin{align*}
h_0\ge b+L_c\ \ {\rm and}\ \ u_0(x)\ge V_c(x-b)\ \ {\rm and}\ \ u_0(x)\not\equiv V_c(x-b)\ \ {\rm for}\ \ b\le x\le b+L_c
\end{align*}
for some $b\ge 0$. Then $\lim_{t\to\infty}H_c(t)=\infty$.
\end{lemma}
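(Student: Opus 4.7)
The natural strategy is to use $V_c(\cdot - b - ct)$ as a lower solution. Set $\xi(t) := b + ct$, $\underline{h}(t) := b + ct + L_c$, and $\underline{u}(t,x) := V_c(x - ct - b)$ on $\xi(t) \le x \le \underline{h}(t)$. Because $V_c$ solves \eqref{compact-support-wave} with $-\mu V_c'(L_c) = c$, the pair $(\underline u, \underline h)$ satisfies the PDE, the zero Dirichlet conditions at $\xi(t)$ and $\underline h(t)$, and the Stefan relation $\underline h'(t) = c = -\mu \underline u_x(t, \underline h(t))$. Combined with $b \ge 0$ (so $ct \le \xi(t)$), $h_0 \ge b + L_c = \underline h(0)$, and $u_0 \ge V_c(\cdot - b)$ on $[b, b + L_c]$, the hypotheses of the lower-solution version of Lemma \ref{comp} hold. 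I conclude $h(t) \ge b + ct + L_c$, i.e., $H_c(t) \ge b + L_c$, for all $t \ge 0$.

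If $b > 0$, then $H_c(t) \ge b + L_c > L_c$ for all $t \ge 0$. Were $H_c$ bounded, Proposition \ref{HctoLc} would force $\lim_{t\to\infty} H_c(t) = L_c$, contradicting $H_c \ge b + L_c > L_c$. Hence $H_c$ is unbounded, and Lemma \ref{Hcinfty} yields $\lim_{t\to\infty} H_c(t) = \infty$.

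For $b = 0$, the bound $H_c \ge L_c$ is by itself compatible with $H_c \to L_c$, so a further argument is needed. The idea is to use the strict hypothesis $u_0 \not\equiv V_c$ on $[0, L_c]$ to produce, at some positive time $t_0$, a configuration satisfying the hypotheses of the lemma with a strictly positive $b^* > 0$, and then apply the preceding paragraph to the translated problem. Writing $\eta(t, \zeta) := u(t, \zeta + ct) - V_c(\zeta)$ on $0 \le \zeta \le L_c$ (which is contained in the relevant domain since $H_c(t) \ge L_c$), a direct computation shows $\eta$ solves a linear parabolic equation with bounded coefficient $1 - u(t, \zeta + ct) - V_c(\zeta)$, with $\eta \ge 0$, $\not\equiv 0$ initially, $\eta(t, 0) = 0$, and $\eta(t, L_c) \ge 0$. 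The strong maximum principle gives $\eta(t, \cdot) > 0$ on $(0, L_c)$ for $t > 0$; Hopf's lemma at $\zeta = 0$ gives $u_x(t, ct) > V_c'(0)$ for $t > 0$; and Hopf at $\zeta = L_c$ forbids $H_c(t) = L_c$ at any $t > 0$ (such an instant would be a minimum of $H_c - L_c \ge 0$, hence $H_c'(t) = 0$, but Hopf would force $h'(t) > c$, i.e., $H_c'(t) > 0$). Fix any $t_0 > 0$. A short endpoint analysis --- using the derivative bound $u_x(t_0, ct_0) > V_c'(0)$ near the left, the strict interior inequality $u(t_0, \cdot) > V_c(\cdot - ct_0)$, and $u(t_0, ct_0 + L_c) > 0$ at the right --- produces $b^* \in (0, H_c(t_0) - L_c)$ such that $u(t_0, x) \ge V_c(x - ct_0 - b^*)$ on $[ct_0 + b^*, ct_0 + b^* + L_c]$. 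By translation invariance of \eqref{fbp}, the shifted pair $\tilde u(s, y) := u(s + t_0, y + ct_0)$, $\tilde h(s) := h(s + t_0) - ct_0$ again solves \eqref{fbp} and satisfies the hypotheses of the lemma with the strictly positive parameter $b^*$; the case $b > 0$ applied to $(\tilde u, \tilde h)$ then gives $H_c(s + t_0) \to \infty$ as $s \to \infty$.

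The main obstacle is the case $b = 0$: one must verify that the strict Hopf-type gap near the left moving boundary at time $t_0$ is strong enough to dominate the linear profile of $V_c(\cdot - b^*)$ near its own left endpoint. The derivative inequality $u_x(t_0, ct_0) > V_c'(0)$, together with positivity of $u(t_0, \cdot + ct_0) - V_c(\cdot)$ away from $\zeta = 0$ and strict positivity at the right endpoint $\zeta = L_c$, is exactly what ensures that such a positive shift $b^*$ exists.
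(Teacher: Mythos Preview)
Your argument is correct, but it takes a genuinely different route from the paper's. After the common first step (using $(V_c(\cdot-ct-b),\,b+ct+L_c)$ as a lower solution to get $H_c(t)\ge b+L_c$), the paper does \emph{not} split into cases on $b$ and does \emph{not} invoke Proposition~\ref{HctoLc} or Lemma~\ref{Hcinfty}. Instead, it uses the strict inequality $u(t_0,\cdot)>V_c(\cdot-ct_0-b)$ (obtained from $u_0\not\equiv V_c(\cdot-b)$ via the strong maximum principle and Hopf) together with the continuous dependence of $(L_{\tilde c},V_{\tilde c})$ on $\tilde c$ to slip a slightly \emph{faster} wave under $u$ at time $t_0$: for some $\tilde c>c$ close to $c$ one has $h(t_0)\ge b+\tilde c t_0+L_{\tilde c}$ and $u(t_0,x)\ge V_{\tilde c}(x-\tilde c t_0-b)$. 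A second application of Lemma~\ref{comp} then yields $h(t)\ge b+\tilde c t+L_{\tilde c}$, hence $H_c(t)\ge b+(\tilde c-c)t+L_{\tilde c}\to\infty$ directly.

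By contrast, you perturb in \emph{space} rather than in speed: for $b>0$ you appeal to the classification result Proposition~\ref{HctoLc} (bounded $H_c$ forces $\lim H_c=L_c$, contradicting $H_c\ge b+L_c>L_c$), and for $b=0$ you use the Hopf gap at the left endpoint to produce a spatial shift $b^*>0$ and reduce to the previous case. Your endpoint analysis is the right one: the three ingredients you list (derivative gap at $\zeta=0$, strict interior positivity, and $u(t_0,ct_0+L_c)>0$) do yield, via a Taylor/compactness argument, a small $b^*>0$ with $u(t_0,\cdot+ct_0+b^*)\ge V_c$ on $[0,L_c]$; and there is no circularity, since Proposition~\ref{HctoLc} is proved in \S4.1 without reference to the present lemma. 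The trade-off is that your proof is more modular (it reuses structural results already established and avoids the continuity of $(L_c,V_c)$ in $c$), whereas the paper's proof is self-contained, handles all $b\ge0$ uniformly, and gives the quantitative linear lower bound $H_c(t)\gtrsim(\tilde c-c)t$ that is later exploited in the proof of Proposition~\ref{spreadingconv}.
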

\begin{remark}
We need additional condition $u_0(x)\not\equiv V_c(x-b)$ which do not need for Lemma 3.7 in \cite{DLZ}, since $(V_c(\cdot -ct), ct+L_c)$ is solution to \eqref{fbp}.
\end{remark}
\begin{proof}[Proof of Lemma \ref{suffforspreading}]
Although the proof is almost same as the proof of Lemma 3.7 of \cite{DWZ}, we give the proof for reader's convenience.

Let us define
\begin{align*}
\xi(t):=b+ct,\ \underline{h}(t):=b+ct+L_c
\end{align*}
and
\begin{align*}
\underline{u}(t,x):=V_c(x-ct-b).
\end{align*}
Then we can use the comparison principle (Lemma \ref{comp}) to obtain
\begin{align*}
&\underline{u}(t, x)\le u(t,x)\ \ {\rm for}\ \ t>0,\ \ b+ct\le x\le b+ct+L_c, \\
&\underline{h}(t)\le h(t)\ \ {\rm for}\ \ t>0.
\end{align*}
Since $u_0(x)\not\equiv V_c(x-b)$, by the strong maximum principle and the Hopf Lemma, we obtain
\begin{align*}
&u(t,x)>V_c(x-ct-b)\ \ {\rm for}\ \ t>0,\ \ b+ct<x\le b+ct+L_c, \\
&h(t)>b+ct+L_c\ \ {\rm for}\ \ t>0.
\end{align*}
We now fix $t_0>0$. By the Hopf Lemma and the continuity of $(L_c, V_c)$ on $c$, we can find $\tilde{c}>c$ which is sufficiently close to $c$ such that
\begin{align*}
&h(t_0)\ge b+\tilde{c}t_0+L_{\tilde{c}}, \\
&u(t_0, x)\ge V_{\tilde{c}}(x-\tilde{c}t_0-b),\ \ {\rm for}\ \ b+\tilde{c}t_0<x<b+\tilde{c}t_0+L_{\tilde{c}}.
\end{align*}
We can use the comparison principle again to deduce that
\begin{align*}
&h(t)\ge b+\tilde{c}t+L_{\tilde{c}}\ \ {\rm for}\ \ t\ge t_0, \\
&u(t, x)\ge V_{\tilde{c}}(x-\tilde{c}t-b)\ \ {\rm for}\ \ t\ge t_0,\ \ b+\tilde{c}t_0\le x<b+\tilde{c}t_0+L_{\tilde{c}}.
\end{align*}
Hence we obtain $H_c(t)\ge b+(\tilde{c}-c)t+L_{\tilde{c}}$ and $\lim_{t\to\infty}H_c(t)=\infty$. Now we have completed the proof.
\end{proof}

\begin{proposition}\label{spreadingconv}
If $H_c(t)$ is unbounded, then $\lim_{t\to\infty}\frac{h(t)}{t}=c^*$ and for any given small $\varepsilon>0$ 
\begin{align*}
\lim_{t\to\infty}\max_{x\in [(c+\varepsilon)t, (c^*-\varepsilon)t]}|u(t, x)-1|=0.
\end{align*}
\end{proposition}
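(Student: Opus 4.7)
The first assertion, $\lim_{t\to\infty}h(t)/t=c^*$, is precisely the content of Lemma \ref{Hcunbounded}, so the task reduces to the uniform convergence $u(t,x)\to 1$ on $[(c+\varepsilon)t,(c^*-\varepsilon)t]$. I would split this into two one-sided bounds. The upper bound $u(t,x)\le 1+Me^{-\delta t}$ is immediate from Lemma \ref{ODE-estimate}. The entire work is the lower bound: for arbitrary small $\delta\in(0,1)$, show that $u(t,x)>1-\delta$ on this moving set for all $t$ sufficiently large.

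My plan is to construct a family of compactly supported moving sub-solutions. Since $c^*<c_0$, Lemma \ref{auxelliptic} (applied with $C=\tilde c$) produces, for each $\tilde c\in[c+\varepsilon/2,c^*-\varepsilon/2]$ and each large $L$, a unique positive solution $w_{L,\tilde c}$ of $w''+\tilde c w'+w(1-w)=0$ on $(-L,L)$ vanishing at $\pm L$, with $w_{L,\tilde c}\to 1$ on compacts as $L\to\infty$. Continuity in $\tilde c$ over the compact range together with this uniform convergence lets me choose $L$ so that $w_{L,\tilde c}\ge 1-\delta/2$ on $[-L/2,L/2]$ uniformly in $\tilde c$. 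A direct computation shows
\[
\underline u(t,x):=\bigl(1-\tfrac{\delta}{2}\bigr)w_{L,\tilde c}(x-\tilde c t-a)
\]
satisfies $\underline u_t-\underline u_{xx}-\underline u(1-\underline u)=-\tfrac{\delta}{2}(1-\tfrac{\delta}{2})w_{L,\tilde c}^{2}\le 0$ on its support, hence is a classical sub-solution with boundary value $0$. Because $h(t)/t\to c^*$ and $h(t)-c^*t$ is bounded by Proposition \ref{upperestspeed}, the support $[\tilde c t+a-L,\tilde c t+a+L]$ lies strictly inside $(ct,h(t))$ as soon as $t\ge T$ is large enough, uniformly for the $(\tilde c,a)$ eventually needed.

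The key technical input still to secure is the initial comparison $u(T,\cdot)\ge\underline u(T,\cdot)$ on this support. I would obtain it in two stages: first, the compact-support lower solution $V_{\tilde c}(\cdot-\underline h(t))$ built inside the proof of Lemma \ref{Hcunbounded} gives a concrete positive pointwise lower bound on $u$ inside the relevant moving frame; second, a standard KPP bootstrapping via another application of Lemma \ref{auxelliptic} (now with a much wider $L$) elevates $u(T,\cdot)$ above $(1-\delta/2)w_{L,\tilde c}(\cdot-\tilde cT-a)$ on the whole support for $T$ sufficiently large. Once that is in place, Lemma \ref{comp} yields $u(t,x)\ge\underline u(t,x)>1-\delta$ on $[\tilde c t+a-L/2,\tilde c t+a+L/2]$ for all $t\ge T$. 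Varying $a$ continuously (and $\tilde c$ through a finite cover of $[c+\varepsilon/2,c^*-\varepsilon/2]$), the union of these moving windows at time $t$ covers $[(c+\varepsilon)t,(c^*-\varepsilon)t]$ as soon as $t$ is large, yielding the claimed uniform lower bound.

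The main obstacle is this initial comparison: one must ensure that $u(T,\cdot)$ is already sufficiently close to $1$ on a wide enough interval, uniformly over the parameters $(\tilde c,a)$ that appear. A parallel, arguably cleaner, route is contradiction: assuming the conclusion fails produces a sequence $(t_n,x_n)$ with $x_n/t_n\in[c+\varepsilon,c^*-\varepsilon]$ and $u(t_n,x_n)\le 1-\delta_0$; shifting $U_n(s,y):=u(t_n+s,x_n+y)$ and applying Lemma \ref{ODE-estimate} with parabolic estimates produces a subsequential limit $U$ that is an entire Fisher-KPP solution on $\mathbb{R}^2$ with $0\le U\le 1$ and $U(0,0)\le 1-\delta_0<1$; passing the lower solution from Lemma \ref{Hcunbounded} through the limit forces $U>0$ throughout and then $U\equiv 1$ by the Liouville-type property for bounded entire KPP solutions, contradicting $U(0,0)<1$.
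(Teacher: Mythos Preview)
Your contradiction route is essentially the paper's approach, and the crucial ingredient you single out --- a uniform positive lower bound on $u$ along the moving window, obtained by running the compact-support sub-solution $V_{\tilde c}$ from Lemma~\ref{Hcunbounded} with $\tilde c$ ranging over a compact subinterval of $(c,c^*)$ --- is exactly what the paper extracts in its Appendix. The paper, however, does not pass to a limit entire solution and invoke a Liouville theorem. Instead it shifts to $(\gamma_n,x_n)$ with $\gamma_n=(1-\delta)t_n$ (not $(t_n,x_n)$), so that the time window $[0,t_n-\gamma_n]=[0,\delta t_n]$ has length tending to infinity; on the fixed spatial box $[-l,l]$ it then compares $v_n$ directly with the solution of $w_t=w_{xx}+w(1-w)$ on $(-l,l)$ starting from the constant $\beta$, which converges to $\tilde w_l^*$ with $\tilde w_l^*(0)>1-\theta_0$. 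This finite-box parabolic comparison replaces your compactness-plus-Liouville step and is slightly more elementary.

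Your direct sub-solution covering with $(1-\tfrac{\delta}{2})w_{L,\tilde c}(x-\tilde c t-a)$ is a genuinely different packaging; the sub-solution computation is correct, and the initial comparison you flag as ``the main obstacle'' is precisely the same uniform lower bound the paper establishes. So both of your routes and the paper's reduce to the same technical core.

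One caution on your Liouville statement: ``$U>0$ throughout'' is not by itself enough to force $U\equiv 1$ for an entire Fisher--KPP solution with $0\le U\le 1$ --- traveling waves are counterexamples. What you actually need (and what the uniform $V_{\tilde c}$ bound does deliver after passing to the limit) is $\inf_{\mathbb{R}^2}U\ge\beta>0$; then comparing with the ODE solution started from $\beta$ at time $-T$ and letting $T\to\infty$ gives $U\equiv 1$. Stated with this correction, your second route is sound.
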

We can prove this proposition by the same way to the proof of Theorem 3.9 of \cite{DWZ}. The proof is a little bit technical, so we give the detail of the proof 
in Appendex for reader's convenience.

\subsection{The case of transition}

In this subsection, we prove following theorem.

\begin{thm}If $H_c(t)$ is bounded, then $\lim_{t\to\infty}H_c(t)=L_c$ and 
\begin{align}\label{transitionconv}
\lim_{t\to\infty}\left\{\sup_{x\in [ct, h(t)]}|u(t,x)-V_c(x-h(t)+L_c)|\right\}=0.
\end{align}
\end{thm}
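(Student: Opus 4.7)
The first conclusion $\lim_{t\to\infty}H_c(t)=L_c$ is already contained in Proposition \ref{HctoLc}, so the whole task is to establish the uniform convergence \eqref{transitionconv}. The natural strategy is a contradiction-plus-compactness argument, in the spirit of Step 1 of the proof of Proposition \ref{HctoLc}: if \eqref{transitionconv} fails, take sequences $t_n\to\infty$ and $x_n\in[ct_n,h(t_n)]$ with $|u(t_n,x_n)-V_c(x_n-h(t_n)+L_c)|\ge\varepsilon_0>0$, translate the solution along $(t_n)$, pass to a bounded-orbit limit $\hat w$ defined for all $t\in\mathbb{R}$, and show that $\hat w(t,y)\equiv V_c(y+L_c)$ to reach a contradiction.

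Concretely, I would first use the quadratic barrier from the proof of Lemma \ref{hprime} (which also governs the behavior of $u$ near $x=ct$) together with $V_c(0)=V_c(L_c)=0$ to show that the difference $u(t,x)-V_c(x-h(t)+L_c)$ vanishes uniformly as $x\to h(t)^-$ or $x-ct\to 0^+$. This lets me replace $\{x_n\}$ by a sequence with $y_n:=x_n-h(t_n)\in[-L_c+\delta,-\delta]$ for some fixed $\delta>0$, and extract $y_n\to y_*\in[-L_c+\delta,-\delta]$. Then, setting $w_n(t,y):=u(t+t_n,y+h(t+t_n))$ and $H_{c,n}(t):=H_c(t+t_n)$, the functions $w_n$ satisfy a PDE of the form \eqref{weq2} on $\mathbb{R}\times(-H_{c,n}(t),0]$. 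Exactly as in the proof of Proposition \ref{HctoLc}, the parabolic $L^p$ estimates, Sobolev embedding and Schauder estimates give uniform $C^{1+\alpha/2,2+\alpha}$ bounds on compact subsets, so along a subsequence $w_n\to\hat w$, $H_{c,n}'\to\tilde H$; since $H_{c,n}(t)\to L_c$ for every $t$, integrating gives $\tilde H\equiv 0$, and the standard argument of Proposition \ref{HctoLc} also provides continuity of $\hat w$ up to $y=-L_c$ (via the companion functions $v_n(t,z)=u(t+t_n,z+c(t+t_n))$). Hence $\hat w\in C^{1,2}(\mathbb{R}\times(-L_c,0])\cap C(\mathbb{R}\times[-L_c,0])$ solves
\begin{equation*}
\begin{cases}
\hat w_t=\hat w_{yy}+c\hat w_y+\hat w(1-\hat w), & t\in\mathbb{R},\ -L_c<y<0,\\
\hat w(t,-L_c)=\hat w(t,0)=0, & t\in\mathbb{R},\\
\hat w_y(t,0)=-c/\mu, & t\in\mathbb{R}.
\end{cases}
\end{equation*}

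The last step is a zero-number argument. Set $\eta(t,y):=\hat w(t,y)-V_c(y+L_c)$ on $\mathbb{R}\times[-L_c,0]$. Then $\eta$ satisfies a linear parabolic equation $\eta_t=\eta_{yy}+c\eta_y+m(t,y)\eta$ with $\eta(t,-L_c)=\eta(t,0)=0$, and, crucially, $\eta_y(t,0)=\hat w_y(t,0)-V_c'(L_c)=-c/\mu-(-c/\mu)=0$, so $y=0$ is a degenerate zero of $\eta(t,\cdot)$ for \emph{every} $t\in\mathbb{R}$. If $\eta\not\equiv 0$, pick any interval $[a,b]\subset\mathbb{R}$ on which $\eta(a,\cdot)\not\equiv 0$ and apply Lemma \ref{zeronumber0} (after an affine rescaling of $[-L_c,0]$ to $[0,1]$): the number of zeros of $\eta(t,\cdot)$ would have to drop strictly across every $s\in(a,b)$, producing a strictly decreasing map from the uncountable set $(a,b)$ into $\mathbb{Z}_{\ge 0}$, a contradiction. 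Therefore $\eta\equiv 0$, i.e.\ $\hat w(t,y)=V_c(y+L_c)$. Evaluating at $(0,y_*)$ yields $|u(t_n,x_n)-V_c(x_n-h(t_n)+L_c)|\to|\hat w(0,y_*)-V_c(y_*+L_c)|=0$, contradicting the standing assumption $\ge\varepsilon_0$, and establishing \eqref{transitionconv}.

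The main obstacle is the compactness/limiting step for $\hat w$: one must verify that the limiting trajectory is defined on the full interval $[-L_c,0]$ with the correct boundary behavior at $y=-L_c$ (despite the moving-boundary character of the original problem) and that the Stefan-type boundary condition $\hat w_y(t,0)=-c/\mu$ genuinely passes to the limit; once this is in place, the degenerate-zero argument is essentially the same as the one used at the end of Step 1 in the proof of Proposition \ref{HctoLc}.
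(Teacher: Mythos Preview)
Your proposal is correct and follows essentially the same route as the paper's proof: take sequences $t_n\to\infty$, pass to a subsequential limit $\hat w$ using parabolic regularity exactly as in Proposition \ref{HctoLc} (with the companion $v_n$ supplying the boundary behavior at $y=-L_c$), and then use the degenerate zero at $y=0$ together with Lemma \ref{zeronumber0} to force $\hat w(t,y)\equiv V_c(y+L_c)$. Your contradiction framing and the paper's ``every subsequential limit is $V_c$, hence full convergence'' framing are equivalent, and your zero-number argument on $[-L_c,0]$ is a harmless variant of the paper's argument on $[y_0,0]$; one small caveat is that the quadratic barrier of Lemma \ref{hprime} is stated only near $x=h(t)$, so your claim about control near $x=ct$ requires the (equally elementary) mirror construction there, which you should make explicit.
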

\begin{proof}
The first assertion has been proved in Proposition \ref{limit}. We will prove the second assertion. Define
\begin{align*}
&v(t,z):=u(t, z+ct)\ \ {\rm for}\ \ t>0, 0<z<H_c(t), \\
&w(t,y):=u(t, y+h(t))\ \ {\rm for}\ \ t>0,\ -H_c(t)<y<0.
\end{align*}
Take any sequence $\{t_n\}$, satisfying $\lim_{n\to\infty}t_n=\infty$, and define
\begin{align*}
H_{c,n}(t):=H_c(t+t_n),\ v_n(t, z):=v(t+t_n, z),\ w_n(t, y):=w(t+t_n, y).
\end{align*}
Then $H_{c,n}$, $v_n$ and $w_n$ satisfy \eqref{veq2} and \eqref{weq2}. By the same argument as in Proposition \ref{HctoLc}, we can see that
 for any $\alpha\in(0,1)$, there exist a subsequence of $\{t_n\}$, functions $\hat{w}$ and $\hat{v}$ such that 
 \begin{align*}
 &H_{c,n}'\to 0 \ \ {\rm in}\ \ C^{\alpha}_{\rm loc}(\mathbb{R}), \\
 &w_n\to\hat{w}\ \ {\rm in}\ \ C^{1+\frac{\alpha}{2}, 2+\alpha}_{\rm loc}(\mathbb{R}\times (-L_c, 0]), \\
 &v_n\to\hat{v}\ \ {\rm in}\ \ C^{1+\frac{\alpha}{2}, 2+\alpha}_{\rm loc}(\mathbb{R}\times [0, L_c)), 
 \end{align*}
 along the subsequence, and $\hat{v}$, $\hat{w}$ satisfies
 \begin{align*}
 &\left\{
 \begin{array}{ll}
 \hat{v}_t=\hat{v}_{zz}+c\hat{v}_z+\hat{v}(1-\hat{v}), & t\in\mathbb{R},\ z\in [0, L_c), \\
 \hat{v}(t,0)=0 & t\in\mathbb{R},
 \end{array}\right. \\
 &\left\{
 \begin{array}{ll}
 \hat{w}_t=\hat{w}_{yy}+c\hat{w}_y+\hat{w}(1-\hat{w}), & t\in\mathbb{R},\ y\in (-L_c, 0], \\
 \hat{w}(t, 0)=0, & t\in\mathbb{R}, \\
 \hat{w}_y(t,0)=-\frac{c}{\mu}, & t\in\mathbb{R}.
 \end{array}\right.
 \end{align*}
 From relation $v_n(t, z)=w(t, z-H_{c,n}(t))$, we have
 \begin{align}\label{vhatwhat}
 \hat{v}(t,z)=\hat{w}(t, z-L_c)
 \end{align}
 for $0<z<L_c$. Since $\hat{v}(t,0)=0$, we see that $\lim_{y\to -L_c}\hat{w}(t, y)=\lim_{y\to -L_c}\hat{v}(t, y+L_c)=0$. So we have $\hat{w}\in C^{1,2}(\mathbb{R}\times [-L_c, 0])$ and 
 \begin{align*}
 \left\{
  \begin{array}{ll}
 \hat{w}_t=\hat{w}_{yy}+c\hat{w}_y+\hat{w}(1-\hat{w}), & t\in\mathbb{R},\ y\in (-L_c, 0], \\
 \hat{w}(t, -L_c)=\hat{w}(t, 0)=0, & t\in\mathbb{R}, \\
 \hat{w}_y(t,0)=-\frac{c}{\mu}, & t\in\mathbb{R}.
 \end{array}\right.
 \end{align*}
 
 We use zero number argument from \cite{DLZ} and \cite{GLZ} to conclude that $\hat{w}(t,y)\equiv V_c(y+L_c)$. Suppose that $\hat{w}(t, y)\not\equiv V_c(y+L_c)$. Then there exist 
 $t_0\in\mathbb{R}$ and $y_0\in (-L_c, 0)$ such that $\hat{w}(t_0, y_0)\ne V_c(y_0+L_c)$. By continuity, we see that there exists $\varepsilon>0$ such that 
 $\hat{w}(t, y_0)\ne V_c(y_0+L_c)$ for $t\in [t_0-\varepsilon, t_0+\varepsilon]$. Now we consider $\eta(t,y):=\hat{w}(t, y)-V_c(y+L_c)$ over $[t_0-\varepsilon, t_0+\varepsilon]\times [y_0,0]$. It is 
 clear that $\eta$ satisfies
 \begin{align*}
 \left\{
 \begin{array}{ll}
 \eta_t=\eta_{yy}+\tilde{m}(t,y)\eta, &(t,y)\in[t_0-\varepsilon, t_0+\varepsilon]\times [y_0, 0], \\
 \eta(t, y_0)\ne 0,\ \eta(t,0)=0, & t\in [t_0-\varepsilon, t_0+\varepsilon],
 \end{array}\right.
 \end{align*}
 where $\tilde{m}$ is a bounded function. 
 
 Therefore we can use the result of zero number by \cite{A}(see Lemma \ref{zeronumber0}) to conclude that the number of zeros of $\eta(t,\cdot)$ on $[y_0, 0]$, say $\mathcal{Z}(t)$, is finite and nonincreasing. 
 Furthermore, if $\eta(s_0, \cdot)$ has a degenerate zero on $[y_0, 0]$ for some $s_0\in (t_0-\varepsilon, t_0+\varepsilon)$, then for any $t_0-\varepsilon<t<s_0<s<t_0+\varepsilon$ we have
 \begin{align*}
 \mathcal{Z}(s)<\mathcal{Z}(s_0)<\mathcal{Z}(t).
 \end{align*}
 However, since 
 \begin{align*}
 &\eta(t, 0)=\hat{w}(t,0)-V_c(L_c)=0, \\
 &\eta_y(t,0)=\hat{w}_y(t,0)-V_c'(L_c)=0,
 \end{align*}
 $\eta(t,\cdot)$ has degenerate zero $y=0$ for any $t\in [t_0-\varepsilon, t_0+\varepsilon]$. Since $\mathcal{Z}(t)<\infty$, this is a contradiction. Thus, we have shown that $\hat{w}(t, y)\equiv V_c(y+L_c)$. From 
 \eqref{vhatwhat}, we also have $\hat{v}(t,z)\equiv V_c(z)$ on $\mathbb{R}\times[0,H_c)$.

Since $(L_c, V_c)$ is uniquely determined by \eqref{compact-support-wave} and Proposition \ref{compact-support-wave-ex}, and thus does not depend on any subsequence of $\{t_n\}$, we can conclude that
\begin{align}
&\lim_{t\to\infty}\left\{\sup_{y\in [-L,0]}|w(t,y)-V_c(y+L_c)|\right\}=0, \label{wconvtoVc} \\
&\lim_{t\to\infty}\left\{\sup_{z\in [0, L]}|v(t,z)-V_c(z)|\right\}=0  \label{vconvtoVc}
\end{align}
holds for any $L\in (0,L_c)$. From \eqref{wconvtoVc} and \eqref{vconvtoVc}, we obtain \eqref{transitionconv}.
\end{proof}
\section{Proof of Theorem B}

In this section, we prove Theorem B. Although we follow the proof in section 4 of \cite{DWZ}, we have to notice 
that the maximal existence time of the solution might be finite. So we divide the proof into several lemmas. In particular, we need Lemma \ref{54} below for our model.

Fix $\phi\in\mathscr{X}(h_0)$, and for $\sigma>0$ let $(u_{\sigma}, h_{\sigma})$ denote the unique positive solution of \eqref{fbp} with initial function $u_0=\sigma\phi$. We assume that $(u_{\sigma}, h_{\sigma})$ 
is defined for $t\in(0, T_{\sigma}^*)$ with $T_{\sigma}^*$ denoting its maximal existence time. Following \cite{DWZ}, we call ``$(u_{\sigma}, h_{\sigma})$ is vanishing (spreading, transition)'', if 
case (i) ((ii), (iii), respectively) in Theorem A happens for $(u_{\sigma}, h_{\sigma})$.

The following lemma follows from the comparison principle.

\begin{lemma}\label{51}
\begin{enumerate}[{\rm (1)}]
\item If $\sigma_1\le \sigma_2$, then $T_{\sigma_1}^*\le T_{\sigma_2}^*$.
\item If $(u_{\sigma_1}, h_{\sigma_1})$ is vanishing, then for $\sigma\in(0,\sigma_1)$, $(u_{\sigma}, h_{\sigma})$ is also vanishing.
\item If $(u_{\sigma_2}, h_{\sigma_2})$ is spreading, then for $\sigma\in(\sigma_2, \infty)$, $(u_{\sigma}, h_{\sigma})$ is also spreading.
\end{enumerate}
\end{lemma}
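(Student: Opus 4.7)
The proof is a direct application of the comparison principle (Lemma \ref{comp}) together with the characterization of finite maximal existence time from Proposition \ref{iff} and the trichotomy of Theorem A. Since both solutions $(u_{\sigma_i}, h_{\sigma_i})$ share the same left moving boundary $x = ct$ with homogeneous Dirichlet data there, and $\sigma_1 \phi \le \sigma_2 \phi$ on $[0, h_0]$, Lemma \ref{comp} applied on the common time interval $[0, \min\{T_{\sigma_1}^*, T_{\sigma_2}^*\})$ gives the pointwise ordering
\[
h_{\sigma_1}(t) \le h_{\sigma_2}(t), \qquad u_{\sigma_1}(t,x) \le u_{\sigma_2}(t,x),
\]
which is the backbone of everything that follows.

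For (1) I would argue by contradiction. Suppose $T_{\sigma_1}^* > T_{\sigma_2}^*$. Then $T_{\sigma_2}^* < \infty$ and Proposition \ref{iff} gives $h_{\sigma_2}(t) - ct \to 0$ as $t \nearrow T_{\sigma_2}^*$. The ordering above forces $h_{\sigma_1}(t) - ct \to 0$ as well. But $h_{\sigma_1}\in C^1([0, T_{\sigma_1}^*))$ and $T_{\sigma_2}^*$ lies strictly inside this interval, so by continuity $h_{\sigma_1}(T_{\sigma_2}^*) - cT_{\sigma_2}^* = 0$, contradicting $h_{\sigma_1}(t) > ct$ on $(0, T_{\sigma_1}^*)$ (which holds because $(u_{\sigma_1}, h_{\sigma_1})$ is still a classical solution there). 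Hence $T_{\sigma_1}^* \le T_{\sigma_2}^*$.

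For (2), vanishing of $(u_{\sigma_1}, h_{\sigma_1})$ means $T_{\sigma_1}^* < \infty$, so by (1) any $\sigma \in (0,\sigma_1)$ gives $T_\sigma^* \le T_{\sigma_1}^* < \infty$. Since spreading and transition in Theorem A both require $T^* = \infty$, the trichotomy forces $(u_\sigma, h_\sigma)$ to vanish. For (3), if $(u_{\sigma_2}, h_{\sigma_2})$ spreads then $T_{\sigma_2}^* = \infty$ and $h_{\sigma_2}(t)/t \to c^*$. Fix $\sigma > \sigma_2$. If $T_\sigma^* < \infty$, then Proposition \ref{iff} gives $h_\sigma(t) - ct \to 0$ as $t \nearrow T_\sigma^*$, while by comparison $h_\sigma(t) - ct \ge h_{\sigma_2}(t) - ct$ on $[0, T_\sigma^*)$, and the right-hand side is continuous and strictly positive at $T_\sigma^*$ (since $(u_{\sigma_2}, h_{\sigma_2})$ is global with $h_{\sigma_2}(t) > ct$ for all $t > 0$). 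This contradiction forces $T_\sigma^* = \infty$. Because $c^* > c$, the inequality $h_\sigma(t) - ct \ge h_{\sigma_2}(t) - ct$ and $h_{\sigma_2}(t)/t \to c^*$ imply $H_c^{(\sigma)}(t) := h_\sigma(t) - ct$ is unbounded, and Proposition \ref{spreadingconv} then yields spreading for $(u_\sigma, h_\sigma)$.

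The only genuinely delicate point is (1); parts (2) and (3) reduce to (1) plus the trichotomy. The subtlety in (1) is that a priori one might fear the smaller-data solution outlives the larger one on a longer time interval, and it is the combination of strict positivity of $h(t) - ct$ on the existence interval with the shrinkage characterization of Proposition \ref{iff} that rules this out.
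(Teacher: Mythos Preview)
Your proof is correct and follows the same approach the paper intends: the paper merely states that the lemma ``follows from the comparison principle'' without further details, and you have supplied exactly the natural expansion via Lemma~\ref{comp}, Proposition~\ref{iff}, and the trichotomy. One small redundancy: in part~(3) you need not run a separate contradiction argument to get $T_\sigma^* = \infty$, since part~(1) already gives $T_\sigma^* \ge T_{\sigma_2}^* = \infty$ directly.
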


Define 
\begin{align*}
\Sigma_1:=\{\sigma>0 : (u_{\sigma}, h_{\sigma})\ \mbox{is\ vanishing}\}, \Sigma_2:=\{\sigma>0 : (u_{\sigma}, h_{\sigma})\ \mbox{is\ spreading}\}
\end{align*}
and
\begin{align*}
\sigma_*:=\sup\Sigma_1, \sigma^*=\inf\Sigma_2.
\end{align*}
\begin{lemma}\label{52}
\begin{enumerate}[{\rm (1)}]
\item $0<\sigma_*\le\sigma^*$.
\item If $L_c<h_0$, then $\sigma^*<\infty$.
\end{enumerate}
\end{lemma}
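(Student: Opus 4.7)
For part (1), the plan splits into two independent assertions. To show $\sigma_* > 0$, the idea is to invoke Corollary \ref{suffvanish2}: there is a constant $C = C(h_0, c, \mu) > 0$ such that every solution whose initial data satisfies $\|u_0\|_{C[0,h_0]} \le C$ vanishes in finite time. Any $\sigma$ in the interval $(0,\, C/\|\phi\|_{C[0,h_0]}]$ therefore gives $u_{0} = \sigma\phi \in \Sigma_1$, whence $\sigma_* \ge C/\|\phi\|_{C[0,h_0]} > 0$. For $\sigma_* \le \sigma^*$, the trichotomy in Theorem A (the restated version as Theorem 4.1) forces $\Sigma_1 \cap \Sigma_2 = \emptyset$, while Lemma \ref{51}(3) shows $\Sigma_2$ is upward closed. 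Hence if $\sigma_1 \in \Sigma_1$ and $\sigma_2 \in \Sigma_2$, one cannot have $\sigma_1 \ge \sigma_2$ (otherwise $\sigma_1$ would also belong to $\Sigma_2$), so $\sigma_1 < \sigma_2$ and taking suprema/infima gives $\sigma_* \le \sigma^*$.

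For part (2), the strategy is to construct a large $\sigma$ in $\Sigma_2$ using the sufficient condition for spreading in Lemma \ref{suffforspreading} with $b = 0$. Under the hypothesis $L_c < h_0$ and because $\phi \in \mathscr{X}(h_0)$, one has $\phi > 0$ on $(0,h_0) \supset (0,L_c]$ and in particular $\phi(L_c) > 0$. The main point is to check that the ratio $R(x) := V_c(x)/\phi(x)$ extends continuously to the whole interval $[0,L_c]$. Near $x = 0$ both $V_c$ and $\phi$ vanish to first order, and their derivatives $V_c'(0)$, $\phi'(0)$ are strictly positive (the latter by definition of $\mathscr{X}(h_0)$; the former because $V_c(0)=0$ with $V_c > 0$ on $(0,L_c)$, so by the ODE uniqueness argument $V_c'(0)$ cannot vanish), so $R(x) \to V_c'(0)/\phi'(0)$ as $x \to 0^+$. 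At $x = L_c$ the numerator vanishes while the denominator is positive, so $R(L_c) = 0$. Consequently $M := \sup_{x \in (0,L_c]} R(x) < \infty$.

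Choosing any $\sigma > M$, one gets $\sigma\phi(x) \ge V_c(x)$ on all of $[0,L_c]$ with strict inequality at $x = L_c$, so $u_0 = \sigma\phi \not\equiv V_c$ on $[0,L_c]$. Since also $h_0 \ge L_c = 0 + L_c$, the hypothesis of Lemma \ref{suffforspreading} is met with $b = 0$, yielding $H_c(t) \to \infty$ along the maximal existence interval. Combined with Proposition \ref{iff}, which forbids $T^*_\sigma < \infty$ unless $H_c(t) \to 0$, this forces $T^*_\sigma = \infty$; together with $H_c(t) \to \infty$ this places $\sigma$ in the spreading case. Therefore $\sigma \in \Sigma_2$ and $\sigma^* \le \sigma < \infty$.

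I do not anticipate a serious obstacle here; the only substantive point is the continuous extension of $V_c/\phi$ to $[0,L_c]$, and this rests only on the positivity of $V_c'(0)$, $\phi'(0)$, and $\phi(L_c)$, all of which follow from the standing data and from $L_c < h_0$.
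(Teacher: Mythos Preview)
Your proposal is correct and follows essentially the same approach as the paper. The paper's proof is extremely terse: for (1) it simply cites Corollary~\ref{suffvanish2} for $\sigma_*>0$ and ``their definition and Lemma~\ref{51}'' for $\sigma_*\le\sigma^*$; for (2) it asserts without justification that for $\sigma$ large one has $\sigma\phi\ge V_c$ on $[0,L_c]$ and then invokes Lemma~\ref{suffforspreading}. Your write-up supplies the details the paper omits---the explicit bound $C/\|\phi\|_{C[0,h_0]}$, the ratio argument showing $V_c/\phi$ extends continuously to $[0,L_c]$, and the use of Proposition~\ref{iff} to rule out $T^*_\sigma<\infty$ before concluding spreading---but the underlying strategy is identical.
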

\begin{proof}
\begin{enumerate}[(1)]
\item The fact $\sigma_*>0$ follows from Corollary \ref{suffvanish2}. The fact $\sigma_*\le\sigma^*$ follows from their definition and Lemma \ref{51}.
\item If $L_c<h_0$, then we can find $\sigma>0$ large enough such that
\begin{align*}
\sigma\phi(x)\ge V_c(x)\ \ {\rm for}\ x\in[0, L_c].
\end{align*}
Therefore by Lemma \ref{suffforspreading}, $(u_{\sigma}, h_{\sigma})$ is spreading. Thus we have $\sigma^*<\infty$.
\end{enumerate}
\end{proof}
\begin{lemma}\label{53} $\sigma_*\notin\Sigma_1$.
\end{lemma}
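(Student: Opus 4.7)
The plan is to argue by contradiction: assume $\sigma_* \in \Sigma_1$ and exhibit some $\sigma > \sigma_*$ for which $(u_\sigma, h_\sigma)$ also vanishes, which would contradict $\sigma_* = \sup \Sigma_1$.

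Under this contrary assumption, Proposition \ref{iff} gives $T^*_{\sigma_*} < \infty$ with $h_{\sigma_*}(t) - ct \to 0$ as $t \nearrow T^*_{\sigma_*}$, and Lemma \ref{fte1} then yields $\|u_{\sigma_*}(t,\cdot)\|_{C[ct,h_{\sigma_*}(t)]} \to 0$. Hence one may choose $t_0 < T^*_{\sigma_*}$ so close to $T^*_{\sigma_*}$ that both $h_{\sigma_*}(t_0) - ct_0$ and $\|u_{\sigma_*}(t_0,\cdot)\|_\infty$ are as small as desired.

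I would then invoke continuous dependence on the parameter $\sigma$. By Lemma \ref{51}(1) we have $T^*_\sigma \ge T^*_{\sigma_*} > t_0$ for every $\sigma \ge \sigma_*$, so $(u_\sigma, h_\sigma)$ is defined on $[0, t_0]$ throughout a right-neighborhood of $\sigma_*$. The uniform a priori bounds of Lemma \ref{hprime} combined with standard parabolic Schauder estimates (after straightening the free boundary as in Proposition \ref{existence}) yield continuity of the map $\sigma \mapsto (u_\sigma(t_0,\cdot), h_\sigma(t_0))$ at $\sigma = \sigma_*$. Consequently, for some $\sigma > \sigma_*$ sufficiently close to $\sigma_*$, the shifted datum $(u_\sigma(t_0, \cdot + ct_0), h_\sigma(t_0) - ct_0)$ (which lies in $\mathscr{X}(h_\sigma(t_0)-ct_0)$ by the strong maximum principle and the Hopf lemma) still satisfies the smallness hypothesis of Corollary \ref{suffvanish2}. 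Translating time by $t_0$ and space by $ct_0$ turns the problem into a fresh instance of \eqref{fbp2}, so Corollary \ref{suffvanish2} (applied to the restarted problem) yields vanishing, i.e.\ $\sigma \in \Sigma_1$, which is the desired contradiction.

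The main technical obstacle is that the vanishing threshold $C(h_0, c, \mu)$ appearing in Corollary \ref{suffvanish2} itself shrinks as $h_0 \to 0$, so the two smallness scales must be matched in the correct order: first fix a target interval width $h^{\dagger} > 0$, giving a definite threshold $C(h^{\dagger}, c, \mu)$; then choose $t_0$ using the quantitative bound $\|u_{\sigma_*}(t_0,\cdot)\|_\infty \le 2C_1 M(h_{\sigma_*}(t_0)-ct_0)$ from the proof of Lemma \ref{fte1} to ensure both $h_{\sigma_*}(t_0) - ct_0 < h^{\dagger}/2$ and $\|u_{\sigma_*}(t_0, \cdot)\|_\infty < C(h^{\dagger}, c, \mu)/2$; finally apply continuous dependence to transfer these bounds, with a factor of $2$ to spare, to $\sigma$ slightly larger than $\sigma_*$. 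A secondary technical point is a clean formulation of continuous dependence at the moving boundary, which is obtained by transforming the problem to a fixed reference domain and applying standard linear parabolic theory.
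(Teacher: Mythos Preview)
Your proof is correct and follows essentially the same route as the paper: argue by contradiction, use vanishing of $(u_{\sigma_*},h_{\sigma_*})$ to pick a time $T_0<T^*_{\sigma_*}$ at which both the interval width and the sup-norm are small, invoke continuous dependence to transfer these bounds to some $\sigma>\sigma_*$, and then apply Corollary~\ref{suffvanish2} to force vanishing for that $\sigma$. The only cosmetic difference is that the paper takes the reference width in Corollary~\ref{suffvanish2} to be the original $h_0$ (requiring $h_{\sigma_*+\varepsilon}(T_0)-cT_0<h_0$ and $\|u_{\sigma_*+\varepsilon}(T_0,\cdot)\|_\infty<C(h_0,c,\mu)$), so your introduction of a separate target width $h^\dagger$ and the accompanying discussion of ``matching two smallness scales'' is unnecessary---just use the fixed $h_0$ from the outset.
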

\begin{proof}
Suppose that $\sigma_*\in\Sigma_1$. We have $T_{\sigma_*}^*<\infty$ and
\begin{align*}
\lim_{t\nearrow T_{\sigma_*}^*}(h_{\sigma_*}(t)-ct)=0, \lim_{t\nearrow T_{\sigma_*}^*}\|u_{\sigma_*}(t, \cdot)\|_{C[ct, h(t)]}=0.
\end{align*}
Then, by Lemma \ref{fte1}, we can find $T_0\in (0, T_{\sigma_*}^*)$ such that
\begin{align*}
0<h_{\sigma_*}(T_0)-cT_0<\frac{h_0}{2},\ \|u_{\sigma_*}(T_0, \cdot)\|_{C[cT_0, h_{\sigma_*}(T_0)]}\le\frac{1}{2}C(h_0, c, \mu),
\end{align*}
where $C(h_0, c, \mu)$ is the constant defined in Corollary \ref{suffvanish2}. By the continuous dependence of solution $(u_{\sigma}, h_{\sigma})$ on $\sigma$, we can see 
that for some sufficiently small $\varepsilon>0$
\begin{align*}
0<h_{\sigma_*+\varepsilon}(T_0)-cT_0<h_0,\ \|u_{\sigma_*+\varepsilon}(T_0, \cdot)\|_{C[cT_0, h_{\sigma_*+\varepsilon}(T_0)]}<C(h_0, c, \mu)
\end{align*}
holds. Therefore, by Corollary \ref{suffvanish2}, we can conclude that $(u_{\sigma_*+\varepsilon}, h_{\sigma_*+\varepsilon})$ is vanishing, contradicting the definition of $\sigma_*$. 
The proof is completed.
\end{proof}
\begin{lemma}\label{54} Assume that $T^*_{\sigma_0}=\infty$ for some $\sigma_0>0$. Then we have $\sup_{\sigma\in(0,\sigma_0)}T_{\sigma}^*=\infty$.
\end{lemma}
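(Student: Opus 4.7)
My plan is to proceed by contradiction using continuous dependence of the free-boundary solution on the initial datum. Suppose that
\begin{align*}
T_0 := \sup_{\sigma \in (0, \sigma_0)} T^*_\sigma < \infty.
\end{align*}
By Lemma \ref{51}(1), $T^*_\sigma$ is nondecreasing in $\sigma$, so in particular $T^*_\sigma \le T_0$ for every $\sigma \in (0, \sigma_0)$. The strategy is to exhibit some $\sigma < \sigma_0$ (close to $\sigma_0$) for which $T^*_\sigma > T_0$, which will be a direct contradiction.

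Fix $T := T_0 + 1$. Since $T^*_{\sigma_0} = \infty$, the solution $(u_{\sigma_0}, h_{\sigma_0})$ exists on $[0, T]$, and the continuous positive function $t \mapsto h_{\sigma_0}(t) - ct$ attains a positive minimum on this compact interval; choose $\delta_0 > 0$ so that
\begin{align*}
h_{\sigma_0}(t) - ct \ge 2\delta_0 \quad \text{for all } t \in [0, T].
\end{align*}
The key step, and the main obstacle, is then to establish the following continuous dependence statement: there exists $\epsilon > 0$ such that for every $\sigma \in (\sigma_0 - \epsilon, \sigma_0)$, the solution $(u_\sigma, h_\sigma)$ exists on the whole of $[0, T]$ and satisfies $|h_\sigma(t) - h_{\sigma_0}(t)| < \delta_0$ for every $t \in [0, T]$.

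To prove this continuous dependence I would partition $[0, T]$ into finitely many short sub-intervals of a common length $\tau$, where $\tau$ is a uniform local existence time supplied by Proposition \ref{existence} (and its remark) applied to any Cauchy datum whose $C^2$ norm is bounded above, and whose habitat width is bounded below, by constants extracted from the $(u_{\sigma_0}, h_{\sigma_0})$ trajectory together with Lemmas \ref{ODE-estimate} and \ref{hprime}. On each sub-interval the contraction-mapping construction used in Proposition \ref{existence} yields continuous dependence on initial data in $C^{(1+\alpha)/2,1+\alpha}$, and Schauder regularity at the endpoint (cf.\ the remark following Proposition \ref{existence}) upgrades the comparison of data to $C^{2+\alpha}$ in time for the next step. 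The uniform lower bound $h_{\sigma_0}(t) - ct \ge 2\delta_0$ is essential: it guarantees that the straightening of the moving strip $\{ct < x < h(t)\}$ to a fixed interval remains uniformly non-degenerate for all $\sigma$ in a neighbourhood of $\sigma_0$, which is exactly what allows the finite iteration to carry through to time $T$ without the constants blowing up.

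With continuous dependence in place, for every $\sigma \in (\sigma_0 - \epsilon, \sigma_0)$ we obtain
\begin{align*}
h_\sigma(t) - ct \ge (h_{\sigma_0}(t) - ct) - \delta_0 \ge \delta_0 > 0 \quad \text{for all } t \in [0, T],
\end{align*}
so the extension criterion in Lemma \ref{hprime} forces $T^*_\sigma > T = T_0 + 1$, contradicting $T^*_\sigma \le T_0$. Hence $T_0 = \infty$, which is the desired conclusion. The entire difficulty is concentrated in the continuous dependence step; once the uniform habitat-width lower bound from globality of $(u_{\sigma_0}, h_{\sigma_0})$ is exploited, the iteration is a routine repetition of the contraction mapping / Schauder combination already used to prove Proposition \ref{existence}.
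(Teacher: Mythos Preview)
Your proposal is correct and follows essentially the same approach as the paper: argue by contradiction, use the global solution $(u_{\sigma_0}, h_{\sigma_0})$ to extract a uniform lower bound on the habitat width on a compact time interval, then combine continuous dependence in $\sigma$ with the uniform local extension time coming from Proposition~\ref{existence} and Lemma~\ref{hprime} to push some $(u_\sigma, h_\sigma)$ with $\sigma<\sigma_0$ past the supposed supremum. The only cosmetic difference is that the paper exploits the identity $\overline{T}=\lim_{\sigma\nearrow\sigma_0}T^*_\sigma$ (from monotonicity) so that for $\sigma$ near $\sigma_0$ the solution already exists on $[0,\overline{T}-\tau/2]$, and then a single extension step to $\overline{T}+\tau/2$ suffices, whereas you iterate the local existence/continuous dependence over a partition of $[0,T_0+1]$.
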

\begin{proof}
Suppose that $\sup_{\sigma\in (0, \sigma_0)}T_{\sigma}^*<\infty$. Since $T_{\sigma}^*$ is nondecreasing in $\sigma$, we have
\begin{align*}
\overline{T}:=\sup_{\sigma\in (0, \sigma_0)}T_{\sigma}^*=\lim_{\sigma\nearrow\sigma_0}T_{\sigma}^*.
\end{align*}
By the assumption of the lemma, we can see that
\begin{align*}
\rho:=\inf_{t\in[0,\overline{T}]}(h_{\sigma_0}(t)-ct)>0
\end{align*}
and for any $T\in (0, \overline{T})$ we have
\begin{align*}
\inf_{t\in [0,T]}(h_{\sigma_0}(t)-ct)>\rho>0.
\end{align*}
From continuous dependence of solutions on $\sigma$, we have that for any $T\in(0, \overline{T})$ there exists $\tilde{\sigma}>0$ such that
\begin{align*}
\inf_{t\in [0, T]}(h_{\sigma}(t)-ct)>\rho
\end{align*}
for $\sigma\in(\tilde{\sigma}, \sigma_0)$.  By Lemma \ref{hprime} we also have
\begin{align*}
&0<u_{\sigma}(t,x)\le C_1\ {\rm for}\ t\in[0, T_{\sigma}^*)\ {\rm and}\ x\in (ct, h_{\sigma}(t)), \\
&0<h'_{\sigma}(t)\le \mu C_2\ {\rm for}\ t\in [0, T_{\sigma}^*)
\end{align*}
for any $\sigma\in (0, \sigma_0)$ and 
\begin{align*}
&0<u_{\sigma_0}(t,x)\le C_1\ {\rm for}\ t\in[0, \overline{T}]\ {\rm and}\ x\in (ct, h_{\sigma}(t)), \\
&0<h'_{\sigma_0}(t)\le \mu C_2\ {\rm for}\ t\in [0, \overline{T}],
\end{align*}
where $C_1$, $C_2$ are constants which depend on $\phi$ and $\sigma_0$ but not on $\sigma\in(0, \sigma_0)$.

We now fix $\delta\in (0, \overline{T})$. By the standard $L^p$ estimates, the Sobolev embedding theorem and the Schauder estimates for parabolic 
equations, we can find $C_3>0$ depending only on $\delta$, $\overline{T}$, $C_1$, $C_2$ such that
\begin{align*}
\|u_{\sigma_0}(t, \cdot)\|_{C^2[ct, h_{\sigma_0}(t)]}\le C_3\ \ {\rm for}\ \ t\in(\delta, \overline{T}].
\end{align*}
Now fix $t\in(\delta, \overline{T})$ arbitrarily. By continuous dependence of solutions on $\sigma$, we can find $\sigma\in (0,\sigma_0)$ such that 
\begin{align}\label{c2normest}
\|u_{\sigma}(t, \cdot)\|_{C^2[ct, h_{\sigma}(t)]}\le 2C_3.
\end{align}
It follows from the proof of Proposition \ref{existence}(see \cite{DLI}) that there exists $\tau>0$ depending only on $C_1$, $C_2$, $C_3$ and $\rho$ but not on $t$ such that 
solution $(u_{\sigma}, h_{\sigma})$ with initial time $t$ can be extended uniquely to the time $t+\tau$. If we choose $t=\overline{T}-\frac{\tau}{2}$, then  we can find $\sigma\in(0,\sigma_0)$ such 
that $(u_{\sigma}, h_{\sigma})$ satisfies \eqref{c2normest} with $t=\overline{T}-\frac{\tau}{2}$, and we can extend the solution up to  $\overline{T}+\frac{\tau}{2}$. This is a contradiction to 
the definition of $\overline{T}$.
\end{proof}

\begin{lemma}\label{55}
$\sigma^*\notin\Sigma_2$
\end{lemma}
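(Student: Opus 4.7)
The plan is to argue by contradiction: assume $\sigma^{*}\in\Sigma_{2}$, so $(u_{\sigma^{*}}, h_{\sigma^{*}})$ spreads; then cook up a nearby $\sigma<\sigma^{*}$ whose solution also spreads, contradicting the definition of $\sigma^{*}=\inf\Sigma_{2}$. The sufficient condition for spreading in Lemma \ref{suffforspreading}, applied at a shifted ``initial time'' $T_{0}$, will be the engine driving the contradiction.

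First I would exploit the spreading conclusion of Proposition \ref{spreadingconv}. Since $h_{\sigma^{*}}(t)/t\to c^{*}>c$ and $u_{\sigma^{*}}(t,x)\to 1$ uniformly on $[(c+\varepsilon)t,(c^{*}-\varepsilon)t]$, I can choose $\varepsilon>0$ small, then $T_{0}$ large, and a real $b\geq 0$ so that the interval $[b+cT_{0},\,b+cT_{0}+L_{c}]$ lies inside $[(c+\varepsilon)T_{0},\,(c^{*}-\varepsilon)T_{0}]$, with $h_{\sigma^{*}}(T_{0})> b+cT_{0}+L_{c}$. Because $V_{c}\le 1-\delta$ for some $\delta>0$ on $[0,L_{c}]$ (strong maximum principle), for $T_{0}$ large enough we obtain the strict inequality
\begin{equation*}
u_{\sigma^{*}}(T_{0},x) \;>\; V_{c}(x-cT_{0}-b),\qquad x\in[b+cT_{0},\,b+cT_{0}+L_{c}],
\end{equation*}
with a uniform gap.

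Next I would transfer this configuration to nearby $\sigma<\sigma^{*}$. Because $T_{\sigma}^{*}$ is nondecreasing in $\sigma$ (Lemma \ref{51}) and $T_{\sigma^{*}}^{*}=\infty$, Lemma \ref{54} forces $\sup_{\sigma<\sigma^{*}}T_{\sigma}^{*}=\infty$, hence $T_{\sigma}^{*}\nearrow\infty$ as $\sigma\nearrow\sigma^{*}$. In particular there exists $\delta_{0}>0$ with $T_{\sigma}^{*}>T_{0}$ for all $\sigma\in(\sigma^{*}-\delta_{0},\sigma^{*})$. Standard continuous dependence on the initial datum over the compact time interval $[0,T_{0}]$ (combined with the uniform a priori bounds and Schauder estimates from Lemma \ref{hprime}) then shows that $(u_{\sigma},h_{\sigma})\to(u_{\sigma^{*}},h_{\sigma^{*}})$ uniformly on this time interval as $\sigma\nearrow\sigma^{*}$. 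Shrinking $\delta_{0}$, I can guarantee both $h_{\sigma}(T_{0})\geq b+cT_{0}+L_{c}$ and
\begin{equation*}
u_{\sigma}(T_{0},x)\;\geq\; V_{c}(x-cT_{0}-b),\qquad x\in[b+cT_{0},\,b+cT_{0}+L_{c}],
\end{equation*}
with strict inequality at some point, for every $\sigma$ in a left-neighbourhood of $\sigma^{*}$.

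Finally I would invoke Lemma \ref{suffforspreading} with $t=T_{0}$ taken as the initial time (the free boundary problem is autonomous after the obvious shift $x\mapsto x-cT_{0}$, $t\mapsto t-T_{0}$, which turns the moving boundary $ct$ into $c(t-T_{0})$ and preserves the structure of \eqref{fbp}). The hypotheses of Lemma \ref{suffforspreading} hold for $(u_{\sigma},h_{\sigma})$ at time $T_{0}$, so $\lim_{t\to\infty}(h_{\sigma}(t)-ct)=\infty$; by Proposition \ref{spreadingconv} this forces $(u_{\sigma},h_{\sigma})$ to be spreading. But $\sigma<\sigma^{*}=\inf\Sigma_{2}$, which is a contradiction. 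The main obstacle I anticipate is the careful justification of continuous dependence over $[0,T_{0}]$ for solutions whose maximal existence time may only barely exceed $T_{0}$—that is precisely why Lemma \ref{54} must be in place before this argument can be carried out, rather than a naive continuity statement.
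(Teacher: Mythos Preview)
Your proposal is correct and follows essentially the same route as the paper: assume $\sigma^{*}\in\Sigma_{2}$, use the spreading conclusion to find a time $T_{0}$ at which $u_{\sigma^{*}}(T_{0},\cdot)$ strictly dominates a shifted copy of $V_{c}$ on an interval of length $L_{c}$, invoke Lemma~\ref{54} (together with the monotonicity of $T_{\sigma}^{*}$) so that nearby $\sigma<\sigma^{*}$ survive past $T_{0}$, transfer the inequality by continuous dependence, and apply Lemma~\ref{suffforspreading} at the shifted initial time to force spreading for $\sigma<\sigma^{*}$. The only cosmetic difference is that the paper takes the specific shift $b=\varepsilon T_{0}$ (writing $V_{c}(x-(c+\varepsilon)T_{0})$) rather than introducing an auxiliary parameter $b$.
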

\begin{proof}
Suppose that $\sigma^*\in\Sigma_2$. Since $(u_{\sigma^*}, h_{\sigma^*})$ is spreading, 
\begin{align}\label{spreadingsigmastar1}
\lim_{t\to\infty}(h_{\sigma^*}(t)-ct)=\infty
\end{align}
holds. Moreover, by Proposition \ref{spreadingconv}, for any $\varepsilon>0$, we have
\begin{align}\label{spreadingsigmastar2}
\lim_{t\to\infty}\max_{x\in [(c+\varepsilon)t, (c^*-\varepsilon)t]}|u_{\sigma^*}(t,x)-1|=0.
\end{align}
Fix any small $\varepsilon>0$. From \eqref{spreadingsigmastar1} and \eqref{spreadingsigmastar2}, we can find $T_0>0$ such that
\begin{align*}
&(c^*-\varepsilon)T_0-(c+\varepsilon)T_0>L_c, \\
&\min_{x\in[(c^*-\varepsilon)T_0, (c+\varepsilon)T_0]}u_{\sigma^*}(T_0, x)>\max_{x\in[0, L_c]}V_c(x).
\end{align*}
From Lemma \ref{54} and Lemma \ref{51}, we can choose $\sigma_0\in (0, \sigma^*)$ such that $T_0<T_{\sigma}^*$ holds for $\sigma\in (\sigma_0, \sigma^*)$.  
By continuous dependence of solutions on $\sigma$, we can find $\sigma\in(\sigma_0, \sigma^*)$ close to $\sigma^*$ such that
\begin{align*}
\min_{x\in[(c+\varepsilon)T_0, (c^*-\varepsilon)T_0]}u_{\sigma}(T_0,x)>\max_{x\in[0, L_c]}V_c(x)
\end{align*}
and then
\begin{align*}
u_{\sigma}(T_0, x)>V_c(x-(c+\varepsilon)T_0).
\end{align*}
From Lemma \ref{suffforspreading}, we can conclude that $\lim_{t\to\infty}(h_{\sigma}(t)-ct)=\infty$, contradicting the definition of $\sigma^*$. Therefore, we have shown that 
$\sigma^*\in\Sigma_2$. 
\end{proof}
\begin{lemma}\label{56}
$\sigma_*=\sigma^*$
\end{lemma}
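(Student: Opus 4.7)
The strategy is to argue by contradiction. First, I would combine the structural lemmas already established to pin down which behavior occurs for each $\sigma$. By Lemma~\ref{51}, $\Sigma_1$ and $\Sigma_2$ are monotone in $\sigma$, so they are subintervals of $(0,\infty)$. Coupled with Lemma~\ref{53} ($\sigma_*\notin\Sigma_1$) and Lemma~\ref{55} ($\sigma^*\notin\Sigma_2$), this gives $\Sigma_1=(0,\sigma_*)$ and $\Sigma_2=(\sigma^*,\infty)$. Invoking the trichotomy of Theorem~A together with Proposition~\ref{iff} (which identifies vanishing with $T^*<\infty$) then forces every $\sigma\in[\sigma_*,\sigma^*]$ to produce a transition solution. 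Under the assumption $\sigma_*<\sigma^*$, each $\sigma_0\in(\sigma_*,\sigma^*)$ therefore satisfies $h_{\sigma_0}(t)-ct\to L_c$ and
\[\sup_{x\in[ct,h_{\sigma_0}(t)]}\bigl|u_{\sigma_0}(t,x)-V_c(x-h_{\sigma_0}(t)+L_c)\bigr|\to 0\quad\text{as }t\to\infty.\]

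Assuming $\sigma_*<\sigma^*$, the plan is to manufacture some $\sigma_1\in(\sigma_0,\sigma^*)$ whose state $(u_{\sigma_1}(T_0,\cdot),h_{\sigma_1}(T_0))$ at a suitably large time $T_0$ meets the hypotheses of Lemma~\ref{suffforspreading}. Once this is in place, restarting the problem at time $T_0$ with this state as new initial data forces $\lim_{t\to\infty}(h_{\sigma_1}(t)-ct)=\infty$, i.e.\ $(u_{\sigma_1},h_{\sigma_1})$ spreads, contradicting $\sigma_1<\sigma^*=\inf\Sigma_2$. The setup proceeds in three steps: fix a small $\epsilon>0$ and, using the transition convergence for $\sigma_0$, pick $T_0$ with $h_{\sigma_0}(T_0)\ge cT_0+L_c-\epsilon$ and $\|u_{\sigma_0}(T_0,\cdot)-V_c(\cdot-h_{\sigma_0}(T_0)+L_c)\|_\infty<\epsilon$; apply the strict comparison principle (Lemma~\ref{comp}) and Hopf's lemma at $t=T_0$ to obtain for any $\sigma_1>\sigma_0$ the strict ordering $u_{\sigma_1}(T_0,\cdot)>u_{\sigma_0}(T_0,\cdot)$ on the interior and $h_{\sigma_1}(T_0)>h_{\sigma_0}(T_0)$; and then choose $b$, a small translate of $h_{\sigma_0}(T_0)-L_c$, so that the template $V_c(\cdot-b)$ lies pointwise below $u_{\sigma_1}(T_0,\cdot)$ on $[b,b+L_c]$ with strict inequality somewhere while $h_{\sigma_1}(T_0)>b+L_c$. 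Lemma~\ref{suffforspreading} then produces the contradiction.

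The hard part will be the third step, the quantitative construction of $b$: the approximation error $\epsilon$ between $u_{\sigma_0}(T_0,\cdot)$ and its $V_c$-template shrinks only as $T_0\to\infty$, whereas the strict-comparison gap $u_{\sigma_1}(T_0,\cdot)-u_{\sigma_0}(T_0,\cdot)$ coming from the Hopf lemma may itself shrink because both solutions are converging to the same limit $V_c$ at infinity. The correct order of quantifiers is to first fix $\sigma_1$ bounded away from $\sigma_0$ inside $(\sigma_0,\sigma^*)$ (taking $\sigma_1$ close to $\sigma^*$, say), then invoke continuous dependence on $\sigma$ over the finite interval $[0,T_0]$ (guaranteed by Proposition~\ref{existence} and Lemma~\ref{54}, which keeps $T_0<T^*_{\sigma_1}$ for $\sigma_1$ in the required range) to produce a uniform positive lower bound $\delta>0$ on the gap on compact interior intervals at the fixed time $T_0$, and finally shrink $\epsilon\ll\delta$, enlarging $T_0$ accordingly. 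A careful case analysis on which side of the argmax of $V_c$ the shifted template sits is needed so that the leftward translation $b\mapsto h_{\sigma_0}(T_0)-L_c-\mathrm{const}$ places the bump in a region where the gap $\delta$ actually overpowers both the approximation error and the shift-induced perturbation, thereby delivering the pointwise domination required by Lemma~\ref{suffforspreading}.
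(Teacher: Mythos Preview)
Your overall setup (reducing to the case where every $\sigma\in[\sigma_*,\sigma^*]$ is a transition solution) is fine, but the proposed contradiction via Lemma~\ref{suffforspreading} has a genuine circularity in the quantifiers that you have not resolved.

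The issue is this: once $\sigma_0$ and $\sigma_1$ are fixed, both are transition solutions, so \emph{both} $u_{\sigma_0}(t,\cdot)$ and $u_{\sigma_1}(t,\cdot)$ converge (in the moving frame) to the \emph{same} profile $V_c$, and both satisfy $h_{\sigma_i}(t)-ct\to L_c$. Hence the interior gap $\delta=\delta(T_0)$ that you obtain from the strong maximum principle at time $T_0$ is not uniform in $T_0$; in fact $\delta(T_0)\to 0$ as $T_0\to\infty$. Your plan says ``fix $T_0$, obtain $\delta$, then shrink $\epsilon\ll\delta$ by enlarging $T_0$'': but enlarging $T_0$ destroys the $\delta$ you just fixed. (Also, continuous dependence on $\sigma$ gives \emph{upper} bounds on the difference, not the lower bound you need; the positivity of the gap comes only from the strong maximum principle.) There is no evident way to squeeze a translated copy of $V_c$ strictly below $u_{\sigma_1}(T_0,\cdot)$ on a full interval of length $L_c$ when $u_{\sigma_1}(T_0,\cdot)$ is itself converging to a translate of $V_c$ on an interval of the same length; near the endpoints the gap is forced to vanish and the approximation error has nothing to absorb it.

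The paper avoids this difficulty by a different, cleaner device. It does not invoke Lemma~\ref{suffforspreading} at all. Instead, it exploits the spatial translation invariance of the equation: at a single fixed time $t_0$ it uses the strict ordering $u_{\sigma_*}(t_0,\cdot)<u_{\sigma^*}(t_0,\cdot)$ and $h_{\sigma_*}(t_0)<h_{\sigma^*}(t_0)$ (strong maximum principle and Hopf) to find a small $\tau_0>0$ with
\[
h_{\sigma_*}(t_0)+\tau_0\le h_{\sigma^*}(t_0),\qquad u_{\sigma_*}(t_0,x-\tau_0)\le u_{\sigma^*}(t_0,x)\ \text{for }x\in[ct_0+\tau_0,\,h_{\sigma_*}(t_0)+\tau_0].
\]
Then the shifted pair $\bigl(u_{\sigma_*}(t,x-\tau_0),\,h_{\sigma_*}(t)+\tau_0\bigr)$ is a lower solution for $(u_{\sigma^*},h_{\sigma^*})$ on $\{x\ge ct+\tau_0\}$, and the comparison principle propagates $h_{\sigma_*}(t)+\tau_0\le h_{\sigma^*}(t)$ for all $t>t_0$. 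Letting $t\to\infty$ gives $L_c+\tau_0\le L_c$, a contradiction. The point is that the comparison is done between two \emph{actual solutions} (one spatially shifted), so no approximation error enters and $\tau_0$ is fixed once and for all at time $t_0$; nothing depends on large-time convergence rates.
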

\begin{proof}
Suppose that $\sigma_*<\sigma^*$ and consider $(u_{\sigma_*}, h_{\sigma_*})$ and $(u_{\sigma^*}, u_{\sigma^*})$. If $\sigma^*=\infty$ we can choose any $\sigma\in (\sigma_*, \infty)$ and consider $(u_{\sigma}, h_{\sigma})$ in stead of 
$(u_{\sigma^*}, h_{\sigma^*})$. By definitions of $\Sigma_1$ and $\Sigma_2$, both $(u_{\sigma_*}, h_{\sigma_*})$ and $(u_{\sigma^*}, h_{\sigma^*})$ are transition. Thus we have
\begin{align}\label{560} 
\lim_{t\to\infty}(h_{\sigma_*}(t)-ct)=\lim_{t\to\infty}(h_{\sigma^*}(t)-ct)=L_c.
\end{align}
By the comparison principle and the strong maximum principle, we have
\begin{align}\label{561}
h_{\sigma_*}(t)<h_{\sigma^*}(t)\ \ {\rm for}\ \ t>0
\end{align}
and
\begin{align}\label{562}
u_{\sigma_*}(t,x)<u_{\sigma^*}(t, x)\ \ {\rm for}\ \ t>0,\ \ ct\le x\le h_{\sigma_*}(t).
\end{align}
Fix $t_0>0$. Then by \eqref{561} and \eqref{562}, we can choose $\tau_0>0$ small such that
\begin{align*}
&h_{\sigma_*}(t_0)+\tau_0\le h_{\sigma^*}(t_0), \\
&u_{\sigma_*}(t_0, x-\tau_0)\le u_{\sigma^*}(t_0,x)\ \ {\rm for}\ \ ct_0+\tau_0\le x\le h_{\sigma_*}(t_0)+\tau_0. 
\end{align*}
Moreover by \eqref{560}, we may assume that $h_{\sigma^*}(t)>ct+\tau_0$ for all $t>t_0$.

Now we define 
\begin{align*}
&\underline{h}(t):=h_{\sigma_*}(t)+\tau_0, \\
&\underline{u}(t, x):=u_{\sigma_*}(t, x-\tau_0), \\
&\xi(t):=ct+\tau_0.
\end{align*}
Then, we have
\begin{align*}
&\underline{u}_t=\underline{u}_{xx}+\underline{u}(1-\underline{u}),\ \ {\rm for}\ \ t>t_0,\ \ \xi(t)<x<\underline{h}(t), \\
&0=\underline{u}(t, \xi(t))\le u_{\sigma^*}(t, \xi(t)), \ \ {\rm for}\ \ t>t_0, \\
&\underline{u}(t, \underline{h}(t))=u_{\sigma_*}(t, h_{\sigma_*}(t))=0,\ \ {\rm for}\ \ t>t_0, \\
&\underline{u}(t_0, x)=u_{\sigma_*}(t_0, x-\tau_0)\le u_{\sigma^*}(t_0, x),\ \ {\rm for}\ \ \xi(0)\le x\le \underline{h}(0), \\
&\underline{h}'(t)=-\mu\underline{u}_x(t, \underline{h}(t)),\ \ {\rm for}\ \ t>t_0.
\end{align*}
The comparison principle (Lemma \ref{comp}) implies that $\underline{h}(t)\le h_{\sigma^*}(t)$, that is, $h_{\sigma_*}(t)+\tau_0\le h_{\sigma^*}(t)$ for $t>t_0$. However, by \eqref{560} we have
\begin{align*}
L_c+\tau_0=\lim_{t\to\infty}(h_{\sigma_*}(t)-ct)+\tau_0\le\lim_{t\to\infty}(h_{\sigma^*}(t)-ct)=L_c,
\end{align*}
which is a contradiction. Therefore we have completed the proof of the Lemma.
\end{proof}
\begin{proof}[Proof of Theorem B]
Lemmas \ref{51} -- \ref{56} lead to the statement of Theorem B.
\end{proof}
\section{Proof of Theorem C}
In this section, we consider the case where $c\ge c^*$. Let $(u, h)$ be the unique solution to \eqref{fbp} defined for $t\in(0, T^*)$ with $T^*$ maximal existence time of the solution.

\begin{lemma}
If $c>c^*$, then $(u, h)$ is always vanishing, that is, $T^*<\infty$ and 
\begin{align*}
\lim_{t\nearrow T^*}(h(t)-ct)=0,\ \lim_{t\nearrow T^*}\sup_{x\in [ct, h(t)]}|u(t,x)|=0.
\end{align*}
\end{lemma}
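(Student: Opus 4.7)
The approach is a short proof by contradiction using the upper estimate of the spreading speed together with Proposition \ref{iff}.

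First I would suppose, for contradiction, that $T^{*}=\infty$, so $(u,h)$ is a global solution. Then Proposition \ref{upperestspeed} applies and yields a constant $C_{0}>0$ with $h(t)-c^{*}t<C_{0}$ for all $t>0$. Consequently,
\begin{align*}
h(t)-ct=\bigl(h(t)-c^{*}t\bigr)+(c^{*}-c)t<C_{0}+(c^{*}-c)t.
\end{align*}
Since $c>c^{*}$, the right-hand side tends to $-\infty$ as $t\to\infty$. On the other hand, Lemma \ref{hprime} (or simply the fact that the solution is defined on $\{ct<x<h(t)\}$ together with the extension criterion used to define $T^{*}$) guarantees that $h(t)-ct>0$ for every $t\in(0,T^{*})=(0,\infty)$. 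This is a contradiction, so $T^{*}<\infty$.

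Once $T^{*}<\infty$ is established, Proposition \ref{iff} immediately gives $\lim_{t\nearrow T^{*}}(h(t)-ct)=0$, and then Lemma \ref{fte1} yields $\lim_{t\nearrow T^{*}}\sup_{x\in[ct,h(t)]}u(t,x)=0$, completing the proof.

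No step here looks serious: the whole argument is just the observation that, when $c>c^{*}$, the forced left boundary moves strictly faster than the a priori upper bound on the spreading front, so the habitat $[ct,h(t)]$ cannot remain non-degenerate for all time. The only mildly delicate point is to be sure that $h(t)-ct$ must stay strictly positive on the entire existence interval, which is built into the definition of the classical solution and into the extension argument in Lemma \ref{hprime}. The borderline case $c=c^{*}$ is not covered by this particular lemma and would have to be handled separately (the estimate $h(t)-c^{*}t<C_{0}$ alone does not force $h(t)-ct\to 0$ there), but the statement in question concerns only $c>c^{*}$.
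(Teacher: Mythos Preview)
Your argument is correct and follows essentially the same approach as the paper: the paper's proof simply says that Proposition \ref{upperestspeed} gives $h(t)\le c^{*}t+C_{0}$, which ``yields that $T^{*}<\infty$,'' leaving the contradiction (that $h(t)-ct$ would become negative) and the appeal to Proposition \ref{iff} and Lemma \ref{fte1} implicit. Your write-up just makes these steps explicit.
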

\begin{proof}
From Proposition \ref{upperestspeed}, we have $h(t)\le c^*t+C_0$ for some constant $C_0>0$, which yields that $T^*<\infty$.
\end{proof}

\begin{proposition}
If $c=c^*$, then $(u,h)$ is always vanishing.
\end{proposition}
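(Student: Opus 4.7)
My plan is to argue by contradiction, assuming $T^*=\infty$. By Proposition \ref{upperestspeed} (applicable since $f(u)=u(1-u)$), we have $h(t)\le c^*t+C_0$, hence $H_c(t):=h(t)-ct\in (0,C_0]$ for all $t>0$, where positivity comes from Proposition \ref{iff}. The argument then splits on whether $H^*:=\liminf_{t\to\infty}H_c(t)$ is zero or positive.

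If $H^*=0$, I would pick $t_n\to\infty$ with $H_c(t_n)\to 0$ and invoke the quadratic barrier $w(t,x)=-C_1M^2(x-h(t))(x-h(t)+2/M)$ from the proof of Lemma \ref{hprime} to conclude $\|u(t_n,\cdot)\|_{C[ct_n,h(t_n)]}\le 2C_1MH_c(t_n)\to 0$. For $n$ large the pair $(u(t_n,\cdot),h(t_n))$ restarts the problem with arbitrarily small initial amplitude on an arbitrarily short interval. Replaying the upper-solution construction in the proof of Proposition \ref{fte2}, with $\varepsilon$ chosen to satisfy \eqref{epsilon} and $L>\max\{\sqrt{2}\,h(t_n),1\}$, comparison with the solution $\zeta$ of \eqref{zeta} forces $H_c(t+t_n)\to 0$ in finite additional time, contradicting $T^*=\infty$.

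If $H^*>0$, I would mimic the extraction in the proof of Proposition \ref{HctoLc}. Pick $t_n\to\infty$ with $H_c(t_n)\to H^*$; since $h'$ is bounded by Lemma \ref{hprime}, $H_c$ is Lipschitz, so Arzelà--Ascoli gives (along a subsequence) $H_{c,n}(t):=H_c(t+t_n)\to \tilde H(t)$ uniformly on compacts, with $\tilde H\ge H^*$ and $\tilde H(0)=H^*$. Parabolic $L^p$ and Schauder estimates applied to $w_n(t,y):=u(t+t_n,y+h(t+t_n))$ then yield $w_n\to \hat w$ in $C^{1+\alpha/2,2+\alpha}_{\mathrm{loc}}$ on the moving domain $\{t\in\mathbb{R},\ -\tilde H(t)<y\le 0\}$, with
\begin{align*}
\hat w_t=\hat w_{yy}+(c+\tilde H'(t))\hat w_y+\hat w(1-\hat w),\quad \hat w(t,0)=\hat w(t,-\tilde H(t))=0,\quad \hat w_y(t,0)=-\frac{c+\tilde H'(t)}{\mu}.
\end{align*}
Because $t=0$ is a minimum of $\tilde H$, we have $\tilde H'(0)=0$ and hence $\hat w_y(0,0)=-c^*/\mu=(q^*)'(0)$, where $q^*$ is the semi-wave of Proposition \ref{semi-wave}. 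Setting $\eta(t,y):=\hat w(t,y)-q^*(y)$ on $[-H^*,0]$, the relations $\eta(t,0)=0$, $\eta(0,-H^*)=-q^*(-H^*)<0$, and $\eta_y(0,0)=0$ identify $y=0$ as a degenerate zero of $\eta(0,\cdot)$, and Lemma \ref{zeronumber0} delivers a strict drop in the (finite) zero count of $\eta(\cdot,t)$ at $t=0$.

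The hard part will be turning this single zero-count drop into a contradiction. My preferred route is to show $\tilde H\equiv H^*$ on $\mathbb{R}$, so that $\hat w_y(t,0)\equiv -c^*/\mu=(q^*)'(0)$ for every $t\in\mathbb{R}$; then $y=0$ becomes a degenerate zero of $\eta(t,\cdot)$ at every time, which is forbidden by Lemma \ref{zeronumber0}. Failing that, I would iterate the extraction at successive near-minima of $H_c$ to produce infinitely many times at which the corresponding $\eta$ has a degenerate zero at $y=0$, forcing infinitely many strict drops in a finite zero count. The underlying nonexistence ingredient that powers the comparison $\eta=\hat w-q^*$ is that at $c=c^*$ no compactly supported traveling wave exists: any ODE solution with $V(L)=0$ and $V'(L)=-c^*/\mu$ is by backward uniqueness equal to $q^*(\cdot-L)$, which is strictly positive at the left endpoint, so it cannot satisfy $V(0)=0$. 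Once the contradiction is reached, Proposition \ref{iff} forces $H_c(t)\to 0$ as $t\nearrow T^*$ and Lemma \ref{fte1} supplies the uniform decay $\sup_{x\in[ct,h(t)]}u(t,x)\to 0$.
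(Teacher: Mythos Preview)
Your case $H^*=\liminf_{t\to\infty}H_{c^*}(t)=0$ is fine and is essentially the content of Proposition~\ref{fte2}. The real gap is in the case $H^*>0$. From your extraction you only know $\tilde H(0)=H^*$ and $\tilde H\ge H^*$, hence $\tilde H'(0)=0$; this gives a degenerate zero of $\eta(t,\cdot)=\hat w(t,\cdot)-q^*$ at the single instant $t=0$. Lemma~\ref{zeronumber0} then yields one strict drop in the zero count, which is not a contradiction. Your two proposed escapes are not carried out: you give no mechanism to force $\tilde H\equiv H^*$ from a liminf extraction, and the ``iterate at successive near-minima'' idea does not combine into a contradiction because each extraction produces a \emph{different} entire limit $\hat w$, so the drops are in the zero counts of different functions.

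The missing idea, which the paper supplies, is to prove \emph{first} that the full limit $\lim_{t\to\infty}H_{c^*}(t)$ exists. This is done exactly as in Lemma~\ref{sign}, but comparing with the semi-wave $q^*$ of Proposition~\ref{semi-wave} (the natural replacement for $V_c$ at $c=c^*$): for each $b>0$ one sets $v(t,z)=u(t,z+c^*t)$ and applies the zero-number argument to $\eta(t,z)=v(t,z)-q^*(z-b)$ on $[0,\min\{H_{c^*}(t),b\}]$ to conclude that $H_{c^*}(t)-b$ changes sign at most finitely many times. Since $H_{c^*}$ is bounded (your use of Proposition~\ref{upperestspeed} is correct), this forces $\lim_{t\to\infty}H_{c^*}(t)=:H^*_{c^*}$ to exist. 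Now the extraction of Proposition~\ref{HctoLc} applies verbatim: $H_{c,n}(t)\to H^*_{c^*}$ for every fixed $t$, so $\int_0^t\tilde H'=0$ for all $t$ and hence $\tilde H'\equiv 0$; consequently $\hat w_y(t,0)\equiv -c^*/\mu=(q^*)'(0)$ for \emph{every} $t\in\mathbb{R}$. Then $\tilde\eta(t,y)=\hat w(t,y)-q^*(y)$ on $[-H^*_{c^*},0]$ has $y=0$ as a degenerate zero for all $t$, while $\tilde\eta(t,-H^*_{c^*})=-q^*(-H^*_{c^*})<0$, and Lemma~\ref{zeronumber0} gives the contradiction. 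Note that once the limit exists, the case $H^*_{c^*}=0$ is subsumed by Proposition~\ref{fte2}, so your separate treatment becomes unnecessary.
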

\begin{proof}
Arguing indirectly we assume that $T^*=\infty$. 

{\bf Step 1.} $\lim_{t\to\infty}H_{c^*}(t)$ exists.

To show this claim, it suffices to show that for any $b\in(0,\infty)$, $H_{c^*}(t)-b$ changes its sign at most finitely many times.

Define $v(t,z)=u(t, z+c^*t)$ and $\eta(t,z)=v(t,z)-q^*(z-b)$, where $q^*$ is defined in Proposition \ref{semi-wave}. 
Then $\eta(t,z)$ satisfies a linear parabolic equation over $\{(t, z)\in\mathbb{R}^2\ |\ t>0,\ z\in(0, \min\{H_{c^*}(t), b\})\}$. By a similar zero number argument to the proof of Lemma 4.5 or Lemma 3.7 in \cite{KM}, we can 
show that $H_{c^*}(t)-b$ changes its sign at most finitely many times.

{\bf Step 2.} Reaching a contradiction.

By step 1, $H^*_{c^*}:=\lim_{t\to\infty}H_{c^*}(t)$ exists. Take any sequence $\{t_n\}\subset\mathbb{R}$ with $\lim_{n\to\infty}t_n=\infty$ and define $w(t,y):=v(t, y+H_{c^*}(t))$, 
\begin{align*}
H_{c^*,n}(t):=H_{c^*}(t+t_n),\ v_n(t, z):=v(t+t_n, z)\ \ {\rm and}\ \ w_n(t, y):=w(t+t_n, y).
\end{align*}
By the same arguments as in the proof of Proposition \ref{HctoLc}, for any $\alpha\in(0,1)$, there exists a subsequence of $\{t_n\}$ such that
\begin{align*}
&H_{c^*,n}'(t)\to 0\ \ C^{\frac{\alpha}{2}}_{\rm loc}(\mathbb{R}),\\
&w_n\to\hat{w}\ \ {\rm in}\ \ C^{1+\frac{\alpha}{2}, 2+\alpha}_{\rm loc}(\mathbb{R}\times (-H^*_{c^*}, 0]), \\
&v_n\to\hat{v}\ \ {\rm in}\ \ C^{1+\frac{\alpha}{2}, 2+\alpha}_{\rm loc}(\Omega_0),
\end{align*}
along the subsequence, where $\Omega_0=\{(t,z) : t\in\mathbb{R},\ z\in [0, H^*_{c^*})\}$. Moreover $\hat{w}$ and $\hat{v}$ satisfies
\begin{align*}
\left\{
\begin{array}{ll}
\hat{w}_t=\hat{w}_{yy}+c^*\hat{w}_y+\hat{w}(1-\hat{w}), & t\in\mathbb{R}, -H^*_{c^*}<z<0. \\
\hat{w}(t, 0)=0,& t\in\mathbb{R}, \\ 
\hat{w}_y(t,0)=-\frac{c^*}{\mu}, & t\in\mathbb{R},
\end{array}\right.
\end{align*}
and
\begin{align*}
\hat{v}_t=\hat{v}_{zz}+c^*\hat{v}_z+\hat{v}(1-\hat{v}),\ \ t\in\mathbb{R},\ 0<z<H^*_{c^*}.
\end{align*}
By relation $v_n(t,y+H_{n, c^*}(t))=w_n(t, y)$, we have $\hat{v}(t, y+H^*_{c^*})=\hat{w}(t, y)$ for $t\in\mathbb{R}$ and $z\in (-H^*_{c^*}, 0)$ and 
\begin{align*}
\lim_{y\to -H^*_{c^*}}\hat{w}(t, y)=\lim_{y\to -H^*_{c^*}}\hat{v}(t, y+H^*_{c^*})=0.
\end{align*}
Thus $\hat{w}\in C^{1,2}(\mathbb{R}\times [-H^*_{c^*}, 0])$ and 
\begin{align*}
\left\{
\begin{array}{ll}
\hat{w}_t=\hat{w}_{yy}+c^*\hat{w}_y+\hat{w}(1-\hat{w}), & t\in\mathbb{R}, -H^*_{c^*}<z<0. \\
\hat{w}(t, -H^*_{c^*})=\hat{w}(t,0)=0,& t\in\mathbb{R}, \\ 
\hat{w}_y(t,0)=-\frac{c^*}{\mu}, & t\in\mathbb{R}.
\end{array}\right.
\end{align*}
We now consider $\tilde{\eta}(t, y):=\hat{w}(t, y)-q^*(y)$ for $t\in\mathbb{R}$ and $y\in [-H^*_{c^*}, 0]$. Since $\tilde{\eta}(t, -H^*_{c^*})<0$ and $\tilde{\eta}(t,0)=0$ for any $t\in\mathbb{R}$, we can use the 
result of zero number by \cite{A}(see also Lemma \ref{zeronumber0}) to conclude that the zero number of $\tilde{\eta}(t, \cdot)$ on $(-H^*_{c^*}, 0]$, say $\tilde{\mathcal{Z}}(t)$, is finite and nonincreasing. Furthermore if 
$\tilde{\eta}(s_0, \cdot)$ has a degenerate zero on $(-H^*_{c^*}, 0]$ for some $s_0$, then we have $\mathcal{\tilde{Z}}(t)\le \mathcal{\tilde{Z}}(s)-1$ for any $t<s_0<s$. 
On the other hand $\tilde{\eta}(t, \cdot)$ has degenerate zero $y=0$ for any $t\in\mathbb{R}$, which is contradiction. The proof is complete.
\end{proof}

\section*{Appendix}
\renewcommand{\theequation}{A.\arabic{equation}}
\setcounter{equation}{0}

In this appendix, we give the proof of Proposition \ref{spreadingconv}. Although the proof is almost identical to that of Theorem 3.9 in \cite{DWZ}, we give its sketch for reader's convenience. 

\begin{proof}[Proof of Proposition \ref{spreadingconv}]
Assume that the statement of the proposition does not hold. Then there exist $\theta_0>0$, $\varepsilon_0>0$ and a sequence of points $(t_n, x_n)\in\mathbb{R}^2$ with 
$t_n\to\infty$ as $n\to\infty$ and $x_n\in [(c+\varepsilon_0)t_n, (c^*-\varepsilon_0)t_n]$ such that
\begin{align*}
|u(t_n, x_n)-1|>\theta_0.
\end{align*}
Choose $\delta>0$ small so that
\begin{align*}
c^*-\frac{\varepsilon_0}{2}\le (1-\delta)\left(c^*-\frac{\varepsilon_0}{3}\right).
\end{align*}
Now we define
\begin{align*}
&\gamma_n:=(1-\delta)t_n, \\
&\Omega_n:=\left\{(t,x)\ :\ 0\le t\le t_n-\gamma_n, -\frac{\varepsilon_0}{2}t_n<x<\frac{\varepsilon_0}{2}t_n\right\}
\end{align*}
and 
\begin{align*}
v_n(t, x):=u(t+\gamma_n, x+x_n)\ \ {\rm for}\ \ (t,x)\in\Omega_n.
\end{align*}
Then, in view of our choice of $\delta$ and the fact $\lim_{t\to\infty}(h(t)/t)=c^*$, we have for all large $n$
\begin{align}\label{4901}
\begin{split}
x+x_n\le\left(c^*-\frac{\varepsilon_0}{2}\right)t_n&\le (1-\delta)\left(c^*-\frac{\varepsilon_0}{3}\right)t_n \\            
                                               &\le\left(c^*-\frac{\varepsilon_0}{3}\right)\gamma_n (\le h(\gamma_n)\le h(t+\gamma_n))
\end{split}
\end{align}
and 
\begin{align}\label{4902}
\begin{split}
x+x_n\ge\left(c+\frac{\varepsilon_0}{2}\right)t_n&>\left(c+\frac{\varepsilon_0}{3}\right)t_n \\ 
                                                 &>\left(c+\frac{\varepsilon_0}{3}\right)(t+\gamma_n)(>c(t+\gamma_n)) 
\end{split}
\end{align}
for all $(t, x)\in\Omega_n$. Thus $v_n$ is well defined. Moreover, we have
\begin{align}\label{4903}
|v_n(t_n-\gamma_n, 0)-1|\ge \theta_0\ \ {\rm for\ all}\ \ n
\end{align}
and from the first inequality of \eqref{4902}, we have for all $(t,x)\in\Omega_n$
\begin{align*}
x+x_n-c(t+\gamma_n)\ge\frac{\varepsilon_0}{2}t_n\to\infty
\end{align*}
as $n\to\infty$. 

Clearly, $v_n$ satisfies
\begin{align*}
\frac{\partial v_n}{\partial t}=\frac{\partial^2 v_n}{\partial x^2}+v_n(1-v_n)\ \ {\rm for}\ \ (t,x)\in\Omega_n.
\end{align*}
By Lemma \ref{ODE-estimate}
\begin{align}\label{4904}
\varlimsup_{t\to\infty}\sup_{x\in[0, h(t)]}u(t,x)\le 1.
\end{align}
Combining \eqref{4903} and \eqref{4904}, we obtain
\begin{align}\label{4905}
v_n(t_n-\gamma_n, 0)\le 1-\theta_0\ \ 
\end{align}

Now we consider the unique positive solution $\tilde{w}_l^*$ of the problem
\begin{align*}
\left\{
\begin{array}{ll}
-w''=w(1-w),& -l<x<l, \\
w(-l)=w(l)=0&
\end{array}\right.
\end{align*}
for all large $l$. We can choose  large $l$ such that $\tilde{w}_l^*(0)\ge 1-\theta_0$ (Lemma \ref{auxelliptic}, see also \cite{D}). Fix such an $l$. For all large $n$ we have $\frac{\varepsilon_0}{2}t_n>l$ and
\begin{align*}
\left\{
\begin{array}{ll}
\displaystyle\frac{\partial v_n}{\partial t}=\frac{\partial^2 v_n}{\partial x^2}+v_n(1-v_n), & 0<t<t_n-\gamma_n,\ -l<x<l, \\
v_n(t, -l)>0,\ v_n(t,l)>0, & 0<t<t_n-\gamma_n, \\ 
v_n(0,x)=u(\gamma_n, x+x_n), & -l<x<l.
\end{array}\right.
\end{align*}

We will show that there exists $\beta>0$ such that
\begin{align}\label{4907}
u(\gamma_n, x+x_n)\ge \beta\ \ {\rm for}\ \ -l<x<l
\end{align}
for all large. If we assume \eqref{4907}, we can derive a contradiction. In fact, let $\tilde{w}_l(t,x)$ be the unique positive solution of
\begin{align*}
\left\{
\begin{array}{ll}
w_t=w_{xx}+w(1-w),& t>0,\ -l<x<l,\\
w(t, -l)=w(t,l)=0, & t>0, \\ 
w(0,x)=\beta, & -l<x<l.
\end{array}\right.
\end{align*}
Since $\tilde{w}_l(t,\cdot)$ converges to $\tilde{w}_l^*$ uniformly as $t\to\infty$, there exists $T^{\star}>0$ such that
\begin{align}\label{4909}
\tilde{w}_l(t,0)>1-\theta_0\ \ {\rm for}\ \ t>T^{\star}.
\end{align}
By virtue of \eqref{4907}, we can use the standard comparison principle to obtain
\begin{align}\label{4910}
\tilde{w}_l(t, x)\le v_n(t,x)\ \ {\rm for}\ \ 0\le t\le t_n-\gamma_n,\ \ -l<x<l.
\end{align}
Since $t_n-\gamma_n=\delta t_n\to\infty$ as $n\to\infty$, by \eqref{4909} and \eqref{4910}, we can conclude that
\begin{align*}
v_n(t_n-\gamma_n, 0)\ge \tilde{w}_l(t_n-\gamma_n, 0)>1-\theta_0
\end{align*}
for all large $n$ satisfying $t_n-\gamma_n>T^{\star}$, which contradicts \eqref{4905}.

To complete the proof, we have to show \eqref{4907}. As in the proof of Theorem 3.9 in \cite{DWZ}, we will look more closely the proof of Lemma \ref{Hcunbounded} and see that we can obtain 
the estimate for $u$ uniformly in $\tilde{c}$ in an interval.

We first observe that
\begin{align*}
[-l,l]\subset\left[-\frac{\varepsilon_0}{2}t_n, \frac{\varepsilon_0}{2}t_n\right]\ \ \ {\rm for\ all\ large}\ n,
\end{align*}
and hence by \eqref{4901} and \eqref{4902}, we have
\begin{align}\label{4911}
\left(c+\frac{\varepsilon_0}{3}\right)\gamma_n\le x+x_n\le\left(c^*-\frac{\varepsilon}{3}\right)\gamma_n\ \ {\rm for}\ \ x\in[-l,l]\ \ {\rm and\ all\ large}\ n.
\end{align}
We set $I_0:=\left[c+\frac{\varepsilon_0}{4}, c^*-\frac{\varepsilon_0}{4}\right]$. We second observe that there exists $\varepsilon>0$ such that
\begin{align*}
\sup_{c\in I_0}\|V_c\|_{\infty}<1-\varepsilon.
\end{align*}
We also observe that
\begin{align*}
\overline{L}:=\sup_{c\in I_0}L_c<\infty.
\end{align*}
Thus we can choose $\tilde{l}>\sup_{c\in I_0}L_c$ which is independent of $\tilde{c}\in I_0$ such that 
\begin{align*}
w_{\tilde{l}}(x)>1-\frac{\varepsilon}{2}\ \ {\rm for}\ \ -\frac{\overline{L}}{2}<x<\frac{\overline{L}}{2},
\end{align*}
where $w_{\tilde{l}}$ is a unique positive solution of
\begin{align*}
\left\{
\begin{array}{ll}
w''+cw'+w(1-w)=0, & -\tilde{l}<x<\tilde{l}, \\
w(-\tilde{l})=w(\tilde{l})=0. &
\end{array}\right.
\end{align*}
Following the proof of Lemma \ref{Hcunbounded}, we see that we can choose $\psi_0$, $T=T(\psi_0)>0$ and $T_1>0$ independently of $\tilde{c}\in I_0$. Hence 
we see from \eqref{psi2} that 
\begin{align}\label{4912}
v(T_1+T, z)>1-\varepsilon\ \ {\rm for}\ \ \tilde{l}-\frac{\overline{L}}{2}<z<\tilde{l}+\frac{\overline{L}}{2}.
\end{align} 
Denote $T_0=T+T_1$. We note that $T_0$ is independent of $\tilde{c}\in I_0$. \eqref{4912} implies that 
\begin{align*}
u(T_0, x)>1-\varepsilon\ \ {\rm for}\ \ x\in\left[\tilde{l}+cT_0-\frac{L_{\tilde{c}}}{2}, \tilde{l}+cT_0+\frac{L_{\tilde{c}}}{2}\right]\subset \left[\tilde{l}+cT_0-\frac{\overline{L}}{2}, \tilde{l}+cT_0+\frac{\overline{L}}{2}\right].
\end{align*}
and then
\begin{align*}
u(T_0, x)>V_{\tilde{c}}\left(x-\tilde{l}-cT_0+\frac{L_{\tilde{c}}}{2}\right)\ \ {\rm for}\ \ x\in \left[\tilde{l}+cT_0-\frac{L_{\tilde{c}}}{2}, \tilde{l}+cT_0+\frac{L_{\tilde{c}}}{2}\right]
\end{align*}
for any $\tilde{c}\in I_0$. By the same argument of the proof of Lemma \ref{Hcunbounded}, we obtain
\begin{align*}
u(t,x)\ge V_{\tilde{c}}\left(x-\tilde{c}t-\tilde{l}-cT_0+\frac{L_{\tilde{c}}}{2}\right).
\end{align*}
Taking $x=\tilde{c}t+\tilde{l}+cT_0$, we obtain
\begin{align*}
u(t+T_0, \tilde{c}t+\tilde{l}+cT_0)\ge V_{\tilde{c}}\left(\frac{L_{\tilde{c}}}{2}\right)\ \ \ {\rm for}\ \ t>0,\ \ \tilde{c}\in I_0.
\end{align*}
We can find that there exists $\beta>0$ such that 
\begin{align*}
V_{\tilde{c}}\left(\frac{L_{\tilde{c}}}{2}\right)\ge \beta.
\end{align*}
Hence, if we take $t+T_0=\gamma_n$, then
\begin{align*}
\tilde{c}t+\tilde{l}+cT_0=\tilde{c}\gamma_n+(c-\tilde{c})T_0+\tilde{l}
\end{align*}
and
\begin{align}\label{4915}
u(\gamma_n, \tilde{c}\gamma_n+(c-\tilde{c})T_0+\tilde{l})\ge\beta
\end{align}
for all large $n$ and all $\tilde{c}\in I_0$. By definition of $I_0$, we have for all large $n$
\begin{align*}
\{\tilde{c}\gamma_n+(c-\tilde{c})T_0+\tilde{l}:\tilde{c}\in I_0\}\supset\left[\left(c+\frac{\varepsilon_0}{3}\right)\gamma_n, \left(c^*-\frac{\varepsilon_0}{3}\right)\gamma_n\right].
\end{align*}
Therefore from \eqref{4911} and \eqref{4915} we have obtained \eqref{4907}. The proof have been completed.
\end{proof}

\section*{Acknowledgement}

The author would like to thank Professor Kazuhiro Ishige, Tohoku University, Japan, for his valuable suggestion which leads to this project.

\ \\



\begin{thebibliography}{99}
\bibitem{A} S. B. Angenent, {\it The zero set of a solution of a parabolic equation}, J. Reine Angew. Math., 390 (1988), 79--96.
\bibitem{AW1} D. G. Aronson and H. F. Weinberger, {\it Nonlinear diffusion in population genetics, combustion, and nerve pulse propagation}, in Partial Differential Equations and Related Topics, 
Lecture Notes in Math. 446, Springer, Berlin, 1975, 5--49.
\bibitem{AW2} D. G. Aronson and H. F. Weinberger, {\it Multidimensional nonlinear diffusion arising in population genetics}, Adv. in Math., 30 (1978), 33--76.
\bibitem{C} J. Cai, {\it Asymptotic behavior of solutions of Fisher-KPP equation with free boundary conditions}, Nonlinear Anal., 16 (2014), 170--177.
\bibitem{CLZ} J. Cai, B. Lou and M. Zhou, {\it Asymptotic behavior of solutions of a reaction diffusion equation with free boundary conditions}, J. Dynam. Differential Equations, 26(2014), 1007--1028.
\bibitem{D} Y. Du, {\it Order Structure and Topological Methods in Nonlinear Partial Differential Equations,\ Vol. 1\ Maximum Principle and Applications,} World Scientific Publishing, 2006.
\bibitem{DLI} Y. Du and Z. Lin, {\it Spreading-vanishing dichotomy in the diffusive logistic model with a free boundary}, SIAM J. Math. Anal., 42 (2010), 377--405.
\bibitem{DL} Y. Du and B. Lou, {\it Spreading and vanishing in nonlinear diffusion problems with free boundaries}, J. Eur. Math. Soc., 17 (2015) 2673--2724.
\bibitem{DLZ} Y. Du, B. Lou and M. Zhou, {\it Nonlinear diffusion problems with free boundaries : Convergence, transition speed and zero number arguments}, SIAM J. Math. Anal., 47(2015), 3555-3584.
\bibitem{DMZ} Y. Du, H. Matsuzawa and M. Zhou, {\it Sharp estimate of the spreading speed determined by nonlinear free boundary problems}, SIAM J. Math. Anal., 46 (2014), 375--396.
\bibitem{DWZ} Y. Du, L. Wei and L. Zhou, {\it Spreading in a shifting environment modeled by the diffusive logistic equation with a free boundary}, preprint. (arXiv:1508.06246)
\bibitem{F} F. J. Fernandez, {\it Unique continuation for parabolic operators. II}, Comm. Partial Differential Equations 28 (2003), 1597--1604. 
\bibitem{GLZ} H. Gu, B. Lou and M. Zhou, {\it Long time behavior of solutions of Fisher-KPP equation with advection and free boundaries}, J. Funct. Anal., 269 (2015) 1714--1768.
\bibitem{KOY} Y. Kaneko, K. Oeda and Y. Yamada, {\it Remarks on spreading and vanishing for free boundary problems of some reaction-diffusion equations}, Funkcial. Ekvac., 57 (2014), 449--465.
\bibitem{KY} Y. Kaneko and Y. Yamada, {\it A free boundary problem for a reaction-diffusion equation appearing in ecology}, Adv. Math. Sci. Appl., 21 (2011), 467--492.
\bibitem{KM} Y. Kaneko and H. Matsuzawa, {\it Spreading speed and sharp asymptotic profiles of solutions in free boundary problems for nonlinear advection-diffusion equations}, J. Math. Anal. Appl., 428 (2015), 43--76. 
\bibitem{KM2} Y. Kaneko and H. Matsuzawa, {\it Spreading and vanishing in a free boundary problem for nonlinear diffusion equations with a given forced moving boundary}, preprint.
\bibitem{LSU} O. A. Ladyzenskaja, V. A. Solonnikov and N. N. Ural' ceva, {\it Linear and Quasilinear Equations of Parabolic Type}, Amer. Math. Soc., Providence, RI, 1968.
\bibitem{Li} G. M. Lieberman, {\it Second Order Parabolic Differential Equations}, World Scientific, Singapore, 1996.
\end{thebibliography}
\end{document}